\numberwithin{equation}{section}
\newtheorem{thm}{Theorem}[section]
\newtheorem{lem}[thm]{Lemma}
\newtheorem{proposition}[thm]{Proposition}
\newtheorem{cor}[thm]{Corollary}
\newtheorem{defn}[thm]{Definition}
\providecommand{\R}{\ensuremath{\mathbb{R}}}
\providecommand{\N}{\ensuremath{\mathbb{N}}}
\providecommand{\Sref}{\ensuremath{S_{ST}}}
\providecommand{\Iref}{\ensuremath{I_{ST}}}
\newenvironment{numberedproof}[1]{{\bf Proof of #1:}}{{}\hfill{\hbox{$\Box$}}\par\bigskip}
\newif\iftechreport
\begin{document}
\iftechreport
\title{Robust exponential convergence of $hp$-FEM in balanced norms for singularly perturbed
reaction-diffusion equations (extended version)}
\else
\title{Robust exponential convergence of $hp$-FEM in balanced norms for singularly perturbed
reaction-diffusion equations}
\fi
\author{J. M. Melenk \\
Institut f\"{u}r Analysis und Scientific Computing\\
Vienna University of Technology\\
Wiedner Hauptstrasse 8-10, A-1040 Wien \\
AUSTRIA \vspace{0.125cm} \\
and\\
C. Xenophontos\\
Department of Mathematics and Statistics\\
University of Cyprus \\
P.O. BOX 20537\\
Nicosia 1678\\
CYPRUS}
\maketitle

\begin{abstract}
The $hp$-version of the finite element method is applied to a singularly perturbed
reaction-diffusion equation posed on an interval or a two-dimensional domain with 
an analytic boundary. On suitably designed \emph{Spectral Boundary Layer meshes}, 
robust exponential convergence in a ``balanced'' norm is shown. 
This ``balanced'' norm is an $\varepsilon$-weighted $H^1$-norm, where 
the weighting in terms of the singular perturbation parameter $\varepsilon$ is such that, 
in contrast to the standard energy norm, boundary layer contributions do {\em not}
vanish in the limit $\varepsilon \rightarrow 0$. 
Robust exponential convergence in the maximum norm is also established. We illustrate 
the theoretical findings with two numerical experiments.
\end{abstract}

\section{Introduction}
\label{sec:intro}

\label{intro} The numerical solution of singularly perturbed problems has
been studied extensively over the last decades (see, e.g., the books 
\cite{mos,rst} and the references therein). These problems typically feature 
boundary layers (and, more generally, also internal layers). Their  
resolution requires the use of strongly refined, layer-adapted meshes. 
In the context of fixed order methods, well-known representatives of such 
meshes include the Bakhvalov mesh \cite{B} and the Shishkin mesh \cite{Shishkin2}. 
For the $p$/$hp$-version Finite Element Method (FEM) or for spectral methods, 
the \emph{Spectral Boundary Layer mesh} \cite{schwab-suri96,melenk97,mB} 
is essentially the smallest mesh that permits the resolution of boundary layers
(see Definition~\ref{SBL} ahead for the 1D version and Section~\ref{sec:2D-meshes} 
for a realization in 2D). 

The use of the above mentioned meshes can lead to robust convergence, i.e., 
convergence uniform in the singular perturbation parameter. For the reaction-diffusion
equations (\ref{eq:1D-problem}), (\ref{eq:2D-problem}) under consideration here, 
the FEM is naturally analyzed in the \emph{energy norm} 
(\ref{eq:energy-norm-1D}), (\ref{eq:energy-norm-2D}), which 
is simply the norm induced by the inner-product defined by the bilinear form of the 
variational problem;  robust convergence of the $h$-FEM 
on Shishkin meshes can be found, for example, in \cite{rst} and robust exponential
convergence on \emph{Spectral Boundary Layer meshes} is shown in \cite{melenk97,mB}. The 
(natural) energy norm associated with this boundary value problem is rather weak 
in that the layer contributions are not ``seen'' by the energy norm; that is, 
the energy norm of the layer contribution vanishes as the 
singular perturbation parameter $\varepsilon$ tends to zero whereas the energy norm of the 
smooth part of the solution does not.  This has sparked the recent work 
\cite{lin-stynes11,roos-franz11,roos-schopf11} to study the convergence of the $h$-FEM
in norms stronger than the energy norm. The analysis of 
\cite{lin-stynes11,roos-franz11,roos-schopf11} is performed in an 
$\varepsilon$-weighted $H^1$-norm 
which is \emph{balanced} in the sense that both the smooth
part and the layer part are (generically) bounded away from zero uniformly in $\varepsilon$; 
both energy norm (see (\ref{eq:energy-norm-1D}), (\ref{eq:energy-norm-2D}) for the 1D and 2D case, respectively) 
and balanced norm (see (\ref{eq:balanced-norm-1D}), (\ref{eq:balanced-norm-2D})) 
are $\varepsilon$-weighted $H^1$-norms but they differ in the $\varepsilon$-scaling. 
Robust convergence in this balanced norm is shown in \cite{lin-stynes11,roos-franz11,roos-schopf11} 
if Shishkin meshes are employed. 
We show in the present work that this analysis can be extended to 
the $hp$-version FEM on \emph{Spectral Boundary Layer meshes} to give robust exponential convergence
of the $hp$-version FEM in this balanced norm. An additional outcome of our convergence 
analysis in the balanced norm is the robust exponential convergence in the maximum norm. 

It is worth mentioning that robust exponential convergence of the $hp$-FEM on \emph{Spectral Boundary Layer
meshes} in the balanced norm was shown earlier in special cases. For example, for the case of 
equations with constant coefficients and polynomial right-hand sides, \cite{schwab-suri96} observes 
that the smooth part of the asymptotic expansion is again polynomial and therefore in the finite element 
space. It follows that a factor $\varepsilon^{1/2}$ is gained in the convergence estimate and leads to robust 
exponential convergence in the balanced norm. A more detailed discussion of similar effects can be found 
in the concluding remarks of \cite{melenk-xenophontos-oberbroeckling13a} and in the section
with numerical results in \cite{melenk-xenophontos-oberbroeckling13b}.

Let us briefly discuss the ideas underlying our analysis. 
Asymptotic expansions may be viewed as a tool to decompose the solution into
components associated with different length scales. Roughly speaking, our 
analysis in balanced norms mimicks this technique on the discrete level in that 
the Galerkin approximation is likewise decomposed into components associated 
with different length scales. In total, our analysis involves the following ideas: 

\begin{enumerate}
\item An analysis of the difference between the FEM approximation and a
Galerkin approximation to a \emph{reduced problem}.

\item A stable decomposition of the FEM space on the layer-adapted mesh into
fine and coarse components. This decomposition relies essentially on
strengthened Cauchy-Schwarz inequalities. 
\end{enumerate}

Throughout the paper we will utilize the usual Sobolev space notation 
$H^{k}\left( \Omega \right)$ 
to denote the space of functions on $\Omega$ with 
weak derivatives up to order $k$ in $L^{2}\left( \Omega\right)$, equipped with the
norm $\left\Vert \cdot\right\Vert_{k,\Omega}$ and seminorm 
$\left\vert \cdot \right\vert_{k,\Omega}$. We will also use the space 
$H_{0}^{1}\left( \Omega \right) =\left\{ u\in H^{1}\left(
\Omega \right) :\left. u\right\vert _{\partial \Omega}=0\right\}$, 
where $\partial \Omega$ denotes the boundary of $\Omega$. The norm of the
space $L^\infty(\Omega)$ of essentially bounded functions is denoted by 
$\|\cdot\|_{\infty,\Omega}$. The letters $C$, $c$ will be used to denote generic
positive constants, independent of any discretization or singular
perturbation parameters and possibly having different values in each
occurrence. Finally, the notation $A\lesssim B$ means the existence of a
positive constant $C$, which is independent of the quantities $A$ and $B$
under consideration and of the singular perturbation parameter $\varepsilon$, 
such that $A\leq CB$. 

\section{The one-dimensional case}
\label{sec:1D}
We start with the one-dimensional case as many of the ideas can be seen 
in this setting already. 
\subsection{Problem formulation and solution regularity}
We consider the following model problem: Find $u$ such that 
\begin{subequations}
\label{eq:1D-problem}
\begin{align}
\label{de}
-\varepsilon ^{2}u^{\prime \prime }+bu&=f\text{ in }I=(0,1),\\
u(0)&=u(1)=0.  \label{bc}
\end{align}
\end{subequations}
The parameter $0<\varepsilon \leq 1$ is given, as are the functions $b>0$ and $f$,
which are assumed to be analytic on $\overline{I}=[0,1]$. In particular, we assume 
that there exist constants $C_{f}$, $\gamma_{f}$, $C_{b}$, $\gamma _{b}$, $c_b >0$, such that 
\begin{equation}
\left\{ 
\begin{array}{c}
\left\Vert f^{(n)}\right\Vert _{\infty ,I}\leq C_{f}\gamma _{f}^{n}n!\quad
\forall \;n\in \mathbb{N}_{0}, \\ 
\left\Vert b^{(n)}\right\Vert _{\infty ,I}\leq C_{b}\gamma _{b}^{n}n!\quad
\forall \;n\in \mathbb{N}_{0}, \\
b(x) \ge c_b  > 0 \qquad \forall x \in \overline{I}. 
\end{array}
\right.   \label{analytic_data}
\end{equation}
The variational formulation of (\ref{eq:1D-problem}) reads: Find 
$u\in H_{0}^{1}\left( I\right)$ such that 
\begin{equation}
{\mathcal{B}}_{\varepsilon}\left( u,v\right) ={\mathcal{F}}\left( v\right) \;\;\forall
\;v\in H_{0}^{1}\left( I\right),   \label{BuvFv}
\end{equation}
where, with $\left\langle \cdot ,\cdot \right\rangle _{I}$ the usual $L^{2}(I)$ inner product, 
\begin{eqnarray}
{\mathcal{B}}_{\varepsilon }\left( u,v\right) &=&\varepsilon
^{2}\left\langle u^{\prime },v^{\prime }\right\rangle _{I}+\left\langle
bu,v\right\rangle _{I},  \label{Buv} \\
{\mathcal{F}}\left( v\right) &=&\left\langle f,v\right\rangle _{I}.
\label{Fv}
\end{eqnarray}
The bilinear form ${\mathcal{B}}_{\varepsilon}\left( \cdot ,\cdot \right)$
given by (\ref{Buv}) is coercive with respect to the \emph{energy norm} 
\begin{equation}
\label{eq:energy-norm-1D}
\left\Vert u\right\Vert _{E,I}^{2}:={\mathcal{B}}_{\varepsilon}\left( u,u\right) ,
\end{equation}
i.e., 
\begin{equation*}
{\mathcal{B}}_{\varepsilon }\left( u,u\right) \geq \left\Vert u\right\Vert
_{E,I}^{2}\;\;\forall \;u\in H_{0}^{1}\left( I\right) .
\end{equation*}
The solution $u$ is analytic in ${I}$ and features boundary layers at the endpoints. 
Its regularity was described in \cite{melenk97} (our presentation below follows
\cite[Prop.~{2.2.1}]{mB}) both in terms of classical differentiability
(see Proposition~\ref{prop:1D-regularity}, (\ref{item:prop:1D-regularity-i})) as well as asymptotic expansions 
(see Proposition~\ref{prop:1D-regularity}, (\ref{item:prop:1D-regularity-ii})): 
\begin{proposition}[{\cite[Prop.~{2.2.1}]{mB}, \cite{melenk97}}] 
\label{prop:1D-regularity}
Assume (\ref{analytic_data}) and let $u \in H^1_0(I)$ be the solution of (\ref{eq:1D-problem})
Then:  
\begin{enumerate}[(i)]
\item 
\label{item:prop:1D-regularity-i}
There are constants $C$, $K > 0$ independent of $\varepsilon \in (0,1]$ such that 
$\|u^{(n)}\|_{L^2(I)} \leq C K^n \max\{n+1,\varepsilon^{-1}\}^n$ for all $n \in \N_0$.
\item 
\label{item:prop:1D-regularity-ii}
$u$ can be decomposed as 
$\displaystyle u=w+u^{BL}+r
$
where, 
for some constants $C_{w}$, $\gamma _{w}$, $C_{BL}$, $\gamma _{BL}$, $C_{r}$, 
$\gamma _{r}$, $b >0$ independent of $\varepsilon \in (0,1]$, 
\begin{subequations}
\begin{eqnarray}  \label{eq:regularity-by-decomposition-scalar}
\left\Vert w^{(n)}\right\Vert _{\infty ,I}&\leq& C_{w}\gamma _{w}^{n}n^{n} \quad \forall n \in {\mathbb{N}}_0, 
\label{smooth} \\
\left\vert \left( u^{BL}\right) ^{(n)}(x)\right\vert &\leq& C_{BL}\gamma
_{BL}^{n} \max\{n+1,\varepsilon^{-1}\}^{n} e^{-b \operatorname*{dist}(x,\partial I)/\varepsilon } \quad  \forall n \in {\mathbb{N}}_0, 
\label{BL} \\
\Vert r^{(n)}\Vert _{0,I} &\leq& C_{r}\varepsilon ^{2-n}e^{-\gamma
_{r}/\varepsilon }, \; n\in \{0,1,2\}.   \label{remainder}
\end{eqnarray}
\end{subequations}
\end{enumerate}
\end{proposition}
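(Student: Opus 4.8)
The plan is to prove (\ref{item:prop:1D-regularity-i}) by a direct bootstrapping argument on the equation (\ref{de}), and (\ref{item:prop:1D-regularity-ii}) by constructing a matched asymptotic expansion whose remainder is estimated by an energy argument.

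For (\ref{item:prop:1D-regularity-i}) I would first record the coercivity bound obtained by testing (\ref{BuvFv}) with $v=u$: since $b\ge c_b$, this gives $\varepsilon^2\|u'\|_{0,I}^2+c_b\|u\|_{0,I}^2\le\|f\|_{0,I}\|u\|_{0,I}$, hence $\|u\|_{0,I}\lesssim 1$ and $\varepsilon\|u'\|_{0,I}\lesssim 1$. Rewriting (\ref{de}) as $u''=\varepsilon^{-2}(bu-f)$ and differentiating $n$ times, Leibniz' rule yields
\begin{equation*}
\|u^{(n+2)}\|_{0,I}\le\varepsilon^{-2}\Bigl(\sum_{k=0}^{n}\binom{n}{k}\|b^{(k)}\|_{\infty,I}\,\|u^{(n-k)}\|_{0,I}+\|f^{(n)}\|_{0,I}\Bigr).
\end{equation*}
Feeding in the analyticity bounds (\ref{analytic_data}) one then proves by induction on $n$ that $\|u^{(n)}\|_{0,I}\le CK^n\max\{n+1,\varepsilon^{-1}\}^n$: in the induction step the binomial sum is controlled using $\binom{n}{k}k!=n!/(n-k)!\le n^k\le\max\{n+1,\varepsilon^{-1}\}^k$, so that choosing $K>\gamma_b,\gamma_f$ turns it into a convergent geometric series, while the prefactor $\varepsilon^{-2}$ is absorbed because $\varepsilon^{-2}\max\{n+1,\varepsilon^{-1}\}^n\le\max\{n+3,\varepsilon^{-1}\}^{n+2}$ (checked separately in the regimes $n+1\le\varepsilon^{-1}$ and $n+1>\varepsilon^{-1}$).

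For (\ref{item:prop:1D-regularity-ii}) I would take for $w$ the truncated outer expansion $w=\sum_{i=0}^{M}\varepsilon^{2i}u_i$, where $u_0:=f/b$ solves the reduced equation $bu_0=f$ and $u_i:=u_{i-1}''/b$ for $i\ge1$, so that formally $-\varepsilon^2w''+bw=f-\varepsilon^{2(M+1)}u_M''$. Cauchy-type estimates for products and quotients of analytic functions, bootstrapped from (\ref{analytic_data}), give bounds of the form $\|u_i^{(n)}\|_{\infty,I}\le C\gamma^{\,2i+n}(2i+n)!$; choosing the truncation order $M\simeq c/\varepsilon$ with $c$ small enough then makes $w$ satisfy the $\varepsilon$-uniform estimate (\ref{smooth}) while the defect $\varepsilon^{2(M+1)}u_M''$ becomes exponentially small in $1/\varepsilon$. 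Since $w$ does not vanish at the endpoints, I would add boundary layer correctors at $x=0$ and $x=1$, solving successively (in the stretched variable $x/\varepsilon$) equations of the type $-\varepsilon^2v''+b(0)v=(\text{lower-order terms})$ with data cancelling the trace of $w$; the leading corrector is a multiple of $e^{-\sqrt{b(0)}\,x/\varepsilon}$, and the full, again truncated, correction $u^{BL}$ satisfies (\ref{BL}) with decay $e^{-b\,\dist(x,\partial I)/\varepsilon}$ for a suitable $b>0$. Finally $r:=u-w-u^{BL}$ belongs to $H^1_0(I)$ up to an exponentially small boundary mismatch (absorbed by an exponentially small smooth lifting) and solves $-\varepsilon^2r''+br=\rho$ with $\|\rho\|_{0,I}\lesssim e^{-\gamma_r/\varepsilon}$; the coercivity estimate then gives $\|r\|_{0,I}+\varepsilon\|r'\|_{0,I}\lesssim e^{-\gamma_r/\varepsilon}$, and the equation for $r$ gives $\|r''\|_{0,I}\lesssim\varepsilon^{-2}e^{-\gamma_r/\varepsilon}$, which is (\ref{remainder}).

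The main obstacle is the second part and, within it, the simultaneous control of the $\varepsilon$-uniform analyticity of the truncated outer expansion $w$, the decay and derivative bounds of the boundary layer correctors $u^{BL}$, and the exponential smallness of the remainder $r$ — all of which hinge on the choice of the truncation order $M\simeq c/\varepsilon$. Keeping track of how the various rates ($b$ in (\ref{BL}), $\gamma_r$ in (\ref{remainder})) depend on $c$, $c_b$, $\gamma_b$, and $\gamma_f$ is the delicate point; the remaining estimates are routine, if somewhat lengthy, inductions.
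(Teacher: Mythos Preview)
The paper does not provide its own proof of this proposition: it is quoted verbatim from the references \cite[Prop.~{2.2.1}]{mB} and \cite{melenk97}, so there is nothing in the present paper to compare against. Your sketch is nonetheless the correct outline of the argument given in those references --- the bootstrapping induction for (\ref{item:prop:1D-regularity-i}) and the truncated asymptotic expansion with $M\sim c/\varepsilon$ for (\ref{item:prop:1D-regularity-ii}) are exactly the techniques used there, and your identification of the delicate point (tracking how the various rates depend on the truncation parameter) is accurate.
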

\subsection{High order FEM} 
The discrete version of the variational formulation (\ref{BuvFv}) reads: Given 
$V_N \subset H^1_0(\Omega)$ 
find $u_{FEM} \in V_N$ such that 
\begin{equation} 
\label{eq:abstract-FEM}
{\mathcal{B}}_{\varepsilon}\left( u_{FEM},v\right) ={\mathcal{F}}\left( v\right) \quad \forall v \in V_N. 
\end{equation}
In order to define the FEM space $V_N$, let 
$\Delta =\left\{ 0=x_{0}<x_{1}<...<x_{N}=1\right\}$ be an arbitrary partition of 
$I=\left( 0,1\right)$ and set 
\[
I_{j}=\left[ x_{j-1},x_{j}\right] ,\quad h_{j}=x_{j}-x_{j-1},\quad j=1,...,N.
\]
Also, define the reference element $\Iref=[ -1,1]$
and note that it can be mapped onto the $j^{\text{th}}$ element $I_{j}$ by
the standard affine mapping 
$x=M_{j}(t)=\frac{1}{2}\left( 1-t\right) x_{j-1}+\frac{1}{2}\left( 1+t\right) x_{j}$.
%
With $\Pi _{p}\left( \Iref\right) $ the space of polynomials of degree $\leq p$ 
on $\Iref$ (and with $\circ $ denoting composition of functions),
we define the finite dimensional subspace as 
\begin{eqnarray*}
{\mathcal S}^{{p}}(\Delta ) &=&\left\{ v\in H^{1}\left( I\right) :v\circ
M_{j}\in \Pi _{p_{j}}(\Iref),j=1,...,N\right\} , \\
{\mathcal S}_{0}^{{p}}(\Delta ) &=&S^{{p}}(\Delta )\cap H_{0}^{1}(I).
\end{eqnarray*}
We restrict our attention here to constant polynomial degree $p$ for all
elements, i.e., $p_{j}=p$, $j=1,\ldots ,N$; clearly, more
general settings with variable polynomial degree are possible. The following 
\emph{Spectral Boundary Layer mesh} is essentially the minimal mesh that
yields robust exponential convergence.

\begin{defn}[Spectral Boundary Layer mesh]
\label{SBL}
For $\lambda>0$, $p\in \mathbb{N}$ and $0<\varepsilon \leq 1$, define the
Spectral Boundary Layer mesh $\Delta_{BL}(\lambda,p)$ as 
$$
\Delta_{BL}(\lambda,p):= 
\begin{cases}
\{0,\lambda p \varepsilon,1-\lambda p \varepsilon,1\} & \mbox{ if $\lambda p \varepsilon < 1/4$} \\
\{0,1\} & \mbox{ if $\lambda p \varepsilon \ge 1/4$}.  
\end{cases}
$$
The spaces $S(\lambda,p)$ and $S_0(\lambda,p)$ of piecewise polynomials of degree $p$ are given by 
\begin{eqnarray*}
S(\lambda,p)&:=& {\mathcal S}^p(\Delta_{BL}(\lambda,p)), \\
S_0(\lambda,p)&:=& {\mathcal S}^p_0(\Delta_{BL}(\lambda,p)) = S(\lambda,p) \cap H^1_0(I). 
\end{eqnarray*}
\end{defn}
We quote the following result from \cite{melenk97}.

\begin{proposition}[{\cite[Thm.~{16}]{melenk97}}]
\label{prop:melenk97} 
Assume that (\ref{analytic_data}) holds and let $u$ be the solution
to (\ref{BuvFv}). 
Then, there exists
$\lambda_0>0$ (depending only on $b$, $f$) such that for every $\lambda \in (0,\lambda_0)$ there  
are 
$C$, $\sigma >0$, independent of $\varepsilon \in (0,1]$  and $p\in\N$ such
that 
\begin{equation}
\label{eq:prop:melenk97-10} 
\inf_{v \in S_0(\lambda,p)} \|u - v\|_{E,I} \leq C e^{-\sigma p}. 
\end{equation}
By C\'ea's Lemma the Galerkin approximation $u_{FEM} \in S_0(\lambda,p)$ satisfies 
$\left\Vert u_{FEM}-u\right\Vert _{E,I}\sim\left\Vert
u_{FEM}-u\right\Vert _{0,I}+\varepsilon \left\Vert \left( u_{FEM}-u\right)
^{\prime }\right\Vert _{0,I}\leq Ce^{-\sigma p}.
$
%
%
\end{proposition}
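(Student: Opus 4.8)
The plan is to reduce the estimate to elementwise polynomial approximation of holomorphic functions, using the solution decomposition of Proposition~\ref{prop:1D-regularity}. First note that the second assertion follows from the first with no extra work: $\mathcal{B}_\varepsilon(\cdot,\cdot)$ is symmetric and induces $\|\cdot\|_{E,I}$, so the Galerkin solution $u_{FEM}$ is the $\|\cdot\|_{E,I}$-orthogonal projection of $u$ onto $S_0(\lambda,p)$ (C\'ea's inequality with constant one), and $c_b\le b\le C_b$ (from (\ref{analytic_data})) gives $\|v\|_{E,I}^2=\varepsilon^2\|v'\|_{0,I}^2+\langle bv,v\rangle_I\sim\|v\|_{0,I}^2+\varepsilon^2\|v'\|_{0,I}^2$, i.e.\ $\|v\|_{E,I}\sim\|v\|_{0,I}+\varepsilon\|v'\|_{0,I}$. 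Thus it suffices to produce, for $\lambda$ small enough, a $v\in S_0(\lambda,p)$ with $\|u-v\|_{E,I}\lesssim e^{-\sigma p}$, and I would split according to the two cases in Definition~\ref{SBL}.

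The main tool is the classical estimate (Chebyshev expansion / Bernstein's lemma): if $\hat u$ extends holomorphically to $\mathcal N_\delta:=\{z\in\C:\dist(z,\Iref)<\delta\}$, then there is $\pi\in\Pi_p(\Iref)$ with
\[
\|\hat u-\pi\|_{H^1(\Iref)}\le C(\delta)\,\|\hat u\|_{L^\infty(\mathcal N_\delta)}\,e^{-\sigma(\delta)p},\qquad \sigma(\delta)>0 .
\]
I would apply this on each element after affine pull-back with a \emph{fixed} $\delta$ (depending only on the analyticity rates of the data) and then glue the piecewise polynomials into an element of $S_0(\lambda,p)$ by subtracting piecewise-linear correctors of the interelement jumps and of the residual boundary values at $x=0,1$; since $u(0)=u(1)=0$, these corrections are $O(e^{-\sigma p})$ and harmless.

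In the case $\lambda p\varepsilon<1/4$ the mesh consists of the layer elements $[0,\lambda p\varepsilon]$, $[1-\lambda p\varepsilon,1]$ and one coarse element, and I would approximate each term of $u=w+u^{BL}+r$ from Proposition~\ref{prop:1D-regularity}(\ref{item:prop:1D-regularity-ii}) separately. For $w$, (\ref{smooth}) together with $n^n\le e^n n!$ shows that $w$ is holomorphic in a fixed complex neighbourhood of $[0,1]$ with bounded sup-norm there, so the tool above applied elementwise gives a piecewise polynomial of degree $p$ with $H^1(I)$-error $\lesssim e^{-\sigma_w p}$. For $u^{BL}$, on a layer element I would pull back to $\Iref$ and exploit $\varepsilon\max\{n+1,\varepsilon^{-1}\}\le\max\{(n+1)\varepsilon,1\}\le n+1$ in (\ref{BL}): the derivatives of order exceeding $\varepsilon^{-1}$ are bounded $\varepsilon$-robustly and pin the radius of holomorphy to a fixed value, while the first $O(\varepsilon^{-1})$ derivatives carry a factor $(\lambda p)^n$ and only inflate the $L^\infty(\mathcal N_\delta)$-norm to $C e^{c\gamma_{BL}\lambda p}$; the displayed estimate then yields a layer-element polynomial with error $\lesssim e^{-(\sigma(\delta)-c\gamma_{BL}\lambda)p}\lesssim e^{-\sigma p}$ provided $\lambda<\lambda_0$, where $\lambda_0$ depends only on $\gamma_{BL}$ and $\delta$, i.e.\ only on $b$ and $f$. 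On the coarse element $\dist(x,\partial I)\ge\lambda p\varepsilon$, so (\ref{BL}) with $n\in\{0,1\}$ bounds $\|u^{BL}\|_{0}+\varepsilon\|(u^{BL})'\|_{0}$ on that element by $Ce^{-b\lambda p}$, and I would simply approximate $u^{BL}$ by $0$ there. Finally (\ref{remainder}) gives $\|r\|_{E,I}^2=\varepsilon^2\|r'\|_{0,I}^2+\langle br,r\rangle_I\lesssim\varepsilon^4e^{-2\gamma_r/\varepsilon}$, and since $\lambda p\varepsilon<1/4$ forces $\varepsilon^{-1}>4\lambda p$, this is $\lesssim e^{-4\gamma_r\lambda p}$, so $r$ is approximated by $0$ as well; summing the three contributions gives $\inf_{v\in S_0(\lambda,p)}\|u-v\|_{E,I}\lesssim e^{-\sigma p}$.

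In the case $\lambda p\varepsilon\ge1/4$ the mesh is the single element $[0,1]$ and $\varepsilon^{-1}\le4\lambda p$, so the decomposition is unnecessary and I would work with $u$ directly: Proposition~\ref{prop:1D-regularity}(\ref{item:prop:1D-regularity-i}) together with the one-dimensional embedding $H^1\hookrightarrow L^\infty$ yields $\|u^{(n)}\|_{\infty,I}\lesssim \widetilde{K}^{n}\max\{n+1,\varepsilon^{-1}\}^{n+1}$, and the same splitting as for $u^{BL}$ — high-order derivatives pinning a fixed radius of holomorphy, the first $O(\varepsilon^{-1})\le O(\lambda p)$ derivatives contributing the factor $(\lambda p)^n$ — shows that $u$ extends holomorphically to a fixed $\mathcal N_\delta$ with $\|u\|_{L^\infty(\mathcal N_\delta)}\lesssim e^{c\lambda p}$, whence the tool above (plus the boundary correction) produces $v\in S_0(\lambda,p)$ with $\|u-v\|_{E,I}\le\|u-v\|_{H^1(I)}\lesssim e^{-\sigma p}$ for $\lambda<\lambda_0$. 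The step I expect to be the genuine obstacle is exactly this one, common to both cases: the functions to be approximated are \emph{not} holomorphic on an $\varepsilon$-independent neighbourhood with bounded norm — the relevant sup-norm is only $O(e^{c\lambda p})$ — so robust exponential convergence is recovered only once $\lambda$ is taken small, and the technical heart is to track the constants in the ``$e^{c\lambda p}$'' factor and in the approximation estimate explicitly enough to verify that $\lambda_0$ depends on nothing but $b$ and $f$; everything else ($H^1$-conformity, the homogeneous boundary conditions, assembling the elementwise bounds, and the passage to $L^\infty$ derivative bounds via interior regularity) is routine bookkeeping.
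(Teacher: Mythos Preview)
The paper does not prove this proposition; it simply quotes it from \cite[Thm.~16]{melenk97}. What can be said is that the strategy of \cite{melenk97} is precisely the one you outline (as is confirmed by the paper's proof sketch of the closely related Lemma~\ref{lemma:1D-approximation}): in the three-element case use the decomposition $u=w+u^{BL}+r$ of Proposition~\ref{prop:1D-regularity}(\ref{item:prop:1D-regularity-ii}), approximate the smooth part $w$ by standard analytic approximation, the boundary-layer part $u^{BL}$ by polynomial approximation on the scaled layer element and by $0$ on the coarse element, and the remainder $r$ by $0$; in the asymptotic case use Proposition~\ref{prop:1D-regularity}(\ref{item:prop:1D-regularity-i}) directly. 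Your identification of the crux---that the scaled functions are analytic in a fixed neighbourhood but with $L^\infty$-bound only $O(e^{c\lambda p})$, so that robust exponential decay requires $\lambda$ below a threshold depending solely on the analyticity constants of $b$ and $f$---is exactly right and is the reason the statement singles out $\lambda_0$. Your sketch is correct and matches the cited source.
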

Define the \emph{balanced} norm by 
\begin{equation}
\label{eq:balanced-norm-1D} 
\|v\|^2_{balanced,I}:= 
\|v\|^2_{0,I} + \varepsilon \|v^\prime\|^2_{0,I}.  
\end{equation}
We note that the balanced norm $\|\cdot\|_{balanced,I}$ is stronger 
than the energy norm $\|\cdot\|_{E,I}$ of (\ref{eq:energy-norm-1D}). 
In Lemma~\ref{lemma:1D-approximation} 
below, we will show that the approximation result (\ref{eq:prop:melenk97-10}) can be sharpened to 
$$
\inf_{v \in S_0(\lambda,p)} \|u - v\|_{balanced,I} \leq C e^{-\sigma p}. 
$$
The key step towards this result is a better treatment of the boundary layer part than it is 
done in \cite[Thm.~{16}]{melenk97}. This modification is due to \cite{schwab-suri96}. For future 
reference we formulate this modification as a separate lemma: 
\begin{lem}
\label{lemma:1D-bdy-layer-approximation}
Let $\varepsilon \in (0,1]$. Let the function $v$ satisfy on $I = (0,1)$ the estimate 
\begin{equation}
\label{eq:lemma:1D-bdy-layer-approximation-10}
|v^{(n)}(x)| \leq C_{v} \gamma^n \max\{n+1,\varepsilon^{-1}\}^n e^{-x/\varepsilon} 
\qquad \forall x \in I, \quad \forall n \in \N_0. 
\end{equation}
Then there are constants $C$, $\beta$, $\eta > 0$ (depending only on $\gamma$) such that the following is true: 
Let $\Delta$ be any mesh with a mesh point $\xi \in (0,1]$ that satisfies 
\begin{equation}
\label{eq:lemma:1D-bdy-layer-approximation-20}
\frac{\xi}{p\varepsilon} \leq \eta. 
\end{equation}
Then there exists an 
approximation $I_p v \in {\mathcal S}^p(\Delta)$ with $I_p v(0) = v(0)$
and $I_p v (1) = v(1)$ as well as the approximation properties 
\begin{align}
\nonumber 
& 
\|v - I_p v \|_{\infty,(0,\xi)} 
+\xi^{-1/2}  \|v - I_p v \|_{0,(0,\xi)} 
+\xi^{1/2}  \|v - I_p v \|_{1,(0,\xi)},  \\
\label{eq:lemma:1D-bdy-layer-approximation-100}
&\leq C C_{v}\left[ \frac{\xi}{p \varepsilon} e^{-\beta p} + e^{-\xi/\varepsilon}\right], \\
\label{eq:lemma:1D-bdy-layer-approximation-110}
& \|v - I_p v \|_{\infty,(\xi,1)} \leq C C_{v} e^{-\xi/\varepsilon}, \\
\label{eq:lemma:1D-bdy-layer-approximation-120}
& \|v - I_p v \|_{0,(\xi,1)} + 
 {\varepsilon} \|v - I_p v \|_{1,(\xi,1)} 
\leq C C_{v}\sqrt{\varepsilon} e^{-\xi/\varepsilon}.  
\end{align}
\end{lem}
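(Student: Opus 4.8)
The plan is to build $I_p v$ by splitting the mesh into the ``layer'' part contained in $(0,\xi)$ and the ``outer'' part in $(\xi,1)$, and treating each differently. On $(0,\xi)$, after the affine pull-back to the reference element $\Iref$, the function $v$ behaves like the analytic, $\varepsilon$-independent-scale function $t\mapsto v(M(t))$; because of the smallness condition \eqref{eq:lemma:1D-bdy-layer-approximation-20} the element width $\xi$ is comparable to $p\varepsilon$, so the pulled-back function is analytic with derivative bounds $\lesssim C_v (\tilde\gamma)^n n!$ for a new constant $\tilde\gamma$ depending only on $\gamma$ and $\eta$. We may then use the standard one-dimensional $hp$-approximation result for analytic functions on $\Iref$ (the Gauss–Lobatto interpolant, or the truncated Legendre/Chebyshev expansion) that gives exponential convergence $e^{-\beta p}$ in $H^1(\Iref)$ and, by a Sobolev embedding on the interval, also in $L^\infty(\Iref)$; scaling back to $I_j\subset(0,\xi)$ produces the $\xi^{\pm1/2}$ weights in \eqref{eq:lemma:1D-bdy-layer-approximation-100}, with the factor $\tfrac{\xi}{p\varepsilon}$ coming from bookkeeping the $\varepsilon$-scale of the derivatives versus the element size. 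On the remaining element(s) of $(\xi,1)$ we do \emph{not} try to resolve $v$ at all: we simply take $I_p v$ to be (essentially) the constant interpolant matching the endpoint values, or more precisely the degree-$p$ interpolant of $v$ there; since $v$ and all the relevant derivatives are pointwise bounded by $C_v\,(\cdots)\,e^{-x/\varepsilon}\le C_v(\cdots)e^{-\xi/\varepsilon}$ on $(\xi,1)$, the triangle inequality $\|v-I_pv\|\le\|v\|+\|I_pv\|$ together with stability of the interpolation operator already yields \eqref{eq:lemma:1D-bdy-layer-approximation-110} in $L^\infty$ and, after integrating the exponential over an interval of length $\le 1$ (which supplies the $\sqrt\varepsilon$), \eqref{eq:lemma:1D-bdy-layer-approximation-120} in the $\varepsilon$-weighted $H^1$-norm.

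\textbf{Key steps, in order.}
First, fix the new constants: choose $\eta$ small enough that $\xi\le\eta p\varepsilon$ forces the local analyticity radius of $v\circ M_j$ on each element $I_j\subseteq(0,\xi)$ to stay bounded below, and read off $\tilde\gamma=\tilde\gamma(\gamma,\eta)$ and hence $\beta=\beta(\gamma)$ from the classical $hp$-estimate. Second, on each $I_j\subseteq(0,\xi)$ apply that estimate on $\Iref$, scale back, and sum over the (finitely many, in fact $\le 1$ since $\xi$ is itself a mesh point at the coarsest relevant scale, but we keep the argument general) elements in $(0,\xi)$; collect the $L^\infty$, $L^2$ and $H^1$ contributions with the stated $\xi$-powers. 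Third, handle the factor $e^{-\xi/\varepsilon}$: part of the error on $(0,\xi)$ comes not from polynomial approximation error but from the fact that $v$ itself is not small there — however $v$ is bounded by $C_v$ uniformly, and its ``tail'' beyond what the local interpolant can see contributes a term controlled by $e^{-\xi/\varepsilon}$ via the exponential weight evaluated at the right endpoint; this is where the second summand in \eqref{eq:lemma:1D-bdy-layer-approximation-100} enters. Fourth, on $(\xi,1)$ carry out the crude triangle-inequality estimate using the global pointwise bound $|v^{(n)}(x)|\le C_v\gamma^n\max\{n+1,\varepsilon^{-1}\}^n e^{-\xi/\varepsilon}$ for $x\ge\xi$, and note $\int_\xi^1 e^{-2x/\varepsilon}\,dx\le \tfrac{\varepsilon}{2}e^{-2\xi/\varepsilon}$ gives the $\sqrt\varepsilon$ in \eqref{eq:lemma:1D-bdy-layer-approximation-120}, while the $\varepsilon$-weight on the $H^1$-seminorm absorbs the extra $\varepsilon^{-1}$ from differentiating the exponential. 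Finally, verify the endpoint conditions $I_pv(0)=v(0)$, $I_pv(1)=v(1)$ and $H^1$-conformity (matching of values at interior mesh points), which is automatic if we use nodal (Gauss–Lobatto) interpolation including the endpoints on every element.

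\textbf{Main obstacle.}
The genuinely delicate point is the first step: extracting from \eqref{eq:lemma:1D-bdy-layer-approximation-20} a clean, $\varepsilon$-independent analyticity/derivative bound for the pulled-back functions $v\circ M_j$ that is uniform over all sub-elements of $(0,\xi)$, so that the resulting constants $C$, $\beta$, $\eta$ depend on $\gamma$ alone. The bound \eqref{eq:lemma:1D-bdy-layer-approximation-10} mixes the two scales $n!$-type growth (the $\gamma^n n!$ analyticity factor, implicit in $\max\{n+1,\varepsilon^{-1}\}^n$ when $n+1\ge\varepsilon^{-1}$) and the $\varepsilon$-scale (when $n+1<\varepsilon^{-1}$, the factor is $\varepsilon^{-n}$, i.e. scale-$\varepsilon$ oscillation); the condition $\xi\lesssim p\varepsilon$ is precisely what makes an element of width $\le\xi$ ``see'' $v$ as an analytic function of complexity $O(p)$ rather than $O(\varepsilon^{-1})$. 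Making this trade-off quantitative — essentially a rescaled Cauchy-estimate argument, as in the cited \cite{schwab-suri96} and \cite{melenk97} — is the heart of the lemma; once it is in place, everything else is the routine scaling and triangle-inequality bookkeeping sketched above.
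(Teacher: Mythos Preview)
Your overall scaffolding (resolve on $(0,\xi)$ by $hp$-approximation after rescaling, be crude on $(\xi,1)$) is correct, and your identification of the analyticity bookkeeping as the technical core is right. But there is a genuine gap in your treatment of $(\xi,1)$, and it is exactly the point of the lemma.

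You argue that on $(\xi,1)$ the triangle inequality $\|v-I_pv\|\le\|v\|+\|I_pv\|$ plus $\int_\xi^1 e^{-2x/\varepsilon}\,dx\le\tfrac{\varepsilon}{2}e^{-2\xi/\varepsilon}$ supplies the factor $\sqrt\varepsilon$ in \eqref{eq:lemma:1D-bdy-layer-approximation-120}. This is fine for $\|v\|_{0,(\xi,1)}$. It fails for $\|I_pv\|_{0,(\xi,1)}$: whatever nodal interpolant you take on $(\xi,1)$ (Gauss--Lobatto or the linear function matching endpoint values), it satisfies $I_pv(\xi)=v(\xi)$, and $|v(\xi)|\sim C_v e^{-\xi/\varepsilon}$. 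Since $I_pv$ is a single polynomial on an interval of length $\sim 1$, it does \emph{not} inherit the exponential decay of $v$, and you only get $\|I_pv\|_{0,(\xi,1)}\lesssim e^{-\xi/\varepsilon}$, not $\sqrt\varepsilon\,e^{-\xi/\varepsilon}$. The stability-of-interpolation argument you invoke gives at best $\|I_pv\|_{\infty,(\xi,1)}\lesssim\|v\|_{\infty,(\xi,1)}\lesssim e^{-\xi/\varepsilon}$, and integrating an $O(1)$-wide constant does not produce $\sqrt\varepsilon$.

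The missing idea (the Schwab--Suri modification the paper emphasizes) is to break the nodal matching at $\xi$ on purpose. After building the $hp$-interpolant $\pi^1$ on $(0,\xi)$, one subtracts the linear correction $\tfrac{x}{\xi}(1-\sqrt\varepsilon)\,v(\xi)$, so that the new approximant $\pi^2$ satisfies $\pi^2(\xi)=\sqrt\varepsilon\,v(\xi)$. On $(\xi,1)$ one then takes the linear interpolant between $\sqrt\varepsilon\,v(\xi)$ and $v(1)$; now $\|I_pv\|_{\infty,(\xi,1)}\lesssim\sqrt\varepsilon\,e^{-\xi/\varepsilon}$, and \eqref{eq:lemma:1D-bdy-layer-approximation-120} follows. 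The price is an extra error of size $|v(\xi)|\lesssim e^{-\xi/\varepsilon}$ on $(0,\xi)$, and this---not a vague ``tail'' contribution---is the precise origin of the second summand $e^{-\xi/\varepsilon}$ in \eqref{eq:lemma:1D-bdy-layer-approximation-100}. Your ``third step'' and ``fourth step'' should be rewritten around this modification; as written, the fourth step does not yield the claimed bound.
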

\begin{proof} 
We will assume that $\xi \in (0,1/2)$; in the converse, ``asymptotic'' case 
we have $ \varepsilon^{-1} \lesssim p$ so that 
a suitable
approximation on a single element may be taken (e.g., the Gau{\ss}-Lobatto interpolant 
or the operator ${\mathcal I}_p$ discussed in detail in \cite[Thm.~{3.14}]{schwab98} 
and \cite[Sec.~{3.2.1}]{melenk-xenophontos-oberbroeckling13a}). 

It suffices to assume that the mesh consists of the two elements $I_1:= (0,\xi)$ and $I_2:=(\xi,1)$. 
We construct $I_p v$ separately on the two elements, starting with $I_1$. 

On $I_1$, we construct $I_p v$ in two steps. In the first step, we let $\pi^1 \in \Pi_p$ 
be the polynomial (on $I_1$) given by \cite[Lemma~{3.8}]{melenk-xenophontos-oberbroeckling13a}. 
It interpolates in the endpoints $0$, $\xi$ of the interval $I_1$, i.e., 
\begin{align}
\label{eq:lemma:1D-bdy-layer-approximation-10000}
& \pi^1(0) = v(0), 
\qquad 
\pi^1(\xi) = v(\xi). 
\end{align}
Furthermore, 
\cite[Lemma~{3.8}]{melenk-xenophontos-oberbroeckling13a} asserts the existence of $\eta > 0$ 
such the constraint (\ref{eq:lemma:1D-bdy-layer-approximation-20}) implies 
\begin{align}
\label{eq:lemma:1D-bdy-layer-approximation-20000}
&\xi^{-1} \|\pi^1 - v\|_{0,I_1} + |\pi^1 - v|_{1,I_1} 
\leq C C_{v} \frac{\xi^{1/2}}{p \varepsilon} e^{-\beta p}. 
\end{align}
(Note that \cite[Lemma~{3.8}]{melenk-xenophontos-oberbroeckling13a} constructs an approximation on the 
reference element $I_{ST}$ instead of $I_1$. It is applicable with $K = \varepsilon^{-1}$ 
and $h = \xi$). 
The 1D Sobolev embedding theorem in the form 
$\|v\|_{\infty,J} \lesssim |J|^{-1/2} \|v\|_{0,J} + |J|^{1/2} \|v^\prime\|_{0,J}$ (where $|J|$ denotes the length
of the interval $J$) gives 
$$
\xi^{-1/2} \|\pi^1-  v\|_{\infty,I_1} + 
\xi^{-1} \|\pi^1 - v\|_{0,I_1} + |\pi^1 - v|_{1,I_1} \leq 
C C_{v} \frac{\xi^{1/2}}{p \varepsilon} e^{-\beta p}. 
$$
In the second step, we modify $\pi^1$ as proposed in \cite{schwab-suri96} in order to obtain a better approximation on 
the element $I_2$. We define $\pi^2 \in \Pi_p$ on $I_1$ as 
$$
\pi^2(x):= \pi^1(x) - \frac{x}{\xi} (1-\sqrt{\varepsilon}) v(\xi), 
$$
so that $\pi^2(\xi) = \pi^1(\xi) - (1-\sqrt{\varepsilon}) v(\xi) = \sqrt{\varepsilon} v(\xi)$. 
In view of $|v(\xi)| \leq C_{v}  e^{-\xi/\varepsilon}$, this modification leads to 
$$
\xi^{-1/2} \|\pi^2-  v\|_{\infty,I_1} + 
\xi^{-1} \|\pi^2 - v\|_{0,I_1} + |\pi^2 - v|_{1,I_1} \leq 
C C_{v} \left[ \frac{\xi^{1/2}}{p \varepsilon} e^{-\beta p}
+ \xi^{-1/2} e^{-\xi/\varepsilon} \right]. 
$$
We take $(I_p v)|_{I_1} = \pi^2$, and this 
shows (\ref{eq:lemma:1D-bdy-layer-approximation-100}). 
On $I_2$, we take $(I_p v)|_{I_2}$ 
as the linear interpolant between the values $\pi^2(\xi) = \sqrt{\varepsilon} v(\xi)$ at $\xi$ 
and $v(1)$ at $1$. We immediately get 
\begin{equation}
\label{eq:lemma:1D-bdy-layer-approximation-1000}
\|I_p v \|_{\infty,I_2} + \|(I_p v)^\prime \|_{\infty,I_2} \leq 
C  \sqrt{\varepsilon} |v(\xi)|  
\leq C C_{v} \sqrt{\varepsilon} e^{-\xi/\varepsilon}. 
\end{equation}
Furthermore, for $v$ we have 
\begin{equation}
\label{eq:lemma:1D-bdy-layer-approximation-2000}
\|v\|_{\infty,I_2} + \varepsilon^{-1/2} \|v\|_{0,I_2} + \sqrt{\varepsilon} \|v\|_{1,I_2} 
\leq C C_{v} e^{-\xi/\varepsilon}.  
\end{equation}
(\ref{eq:lemma:1D-bdy-layer-approximation-1000}) and 
(\ref{eq:lemma:1D-bdy-layer-approximation-2000}) imply, along with the triangle inequality, then 
(\ref{eq:lemma:1D-bdy-layer-approximation-110}), 
(\ref{eq:lemma:1D-bdy-layer-approximation-120}). 
\end{proof}
Lemma~\ref{lemma:1D-bdy-layer-approximation} shows that boundary layer functions can be approximated
at a robust exponential rate in various norms including $L^\infty$ and the energy norm 
(\ref{eq:energy-norm-1D}), if the mesh is suitably chosen. We now show approximability of solutions 
to (\ref{BuvFv}) in the balanced norm (\ref{eq:balanced-norm-1D}):
\begin{lem}
\label{lemma:1D-approximation}
Assume that (\ref{analytic_data}) holds and let $u$ be the solution
to (\ref{BuvFv}). Then there are constants $\lambda_0$, $C$, $\beta > 0$ (depending only on the constants
appearing in (\ref{analytic_data})) such that for every $\lambda \in (0,\lambda_0]$, 
$\varepsilon \in (0,1]$, $p \in \N$, there 
exists an approximant $I_p u \in {\mathcal S}^p_0(\Delta_{BL}(\lambda,p))$ that satisfies 
\begin{subequations}
\label{eq:lemma:1D-approximation-10}
\begin{eqnarray}
\|u - I_p u\|_{\infty,I} &\leq& C e^{-\beta \lambda p}, \\
\|u - I_p u\|_{0,I} + \sqrt{\lambda p \varepsilon} \| (u - I_p u)^\prime\|_{0,I} &\leq & C e^{-\beta \lambda p}.
\end{eqnarray}
\end{subequations}
\end{lem}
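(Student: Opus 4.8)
The plan is to combine the regularity decomposition of Proposition~\ref{prop:1D-regularity}(\ref{item:prop:1D-regularity-ii}) with a standard polynomial approximation of the smooth part and with the building block behind Lemma~\ref{lemma:1D-bdy-layer-approximation} for the boundary layer, distinguishing the \emph{asymptotic} regime $\lambda p\varepsilon\ge 1/4$ (where $\Delta_{BL}(\lambda,p)$ is the single element $[0,1]$) from the \emph{pre-asymptotic} regime $\lambda p\varepsilon<1/4$ (three elements $(0,\lambda p\varepsilon)$, $(\lambda p\varepsilon,1-\lambda p\varepsilon)$, $(1-\lambda p\varepsilon,1)$).

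First I would fix $\lambda_0>0$ small enough that: (i) $\lambda_0\le\eta/b$, with $\eta$ the constant of \cite[Lemma~{3.8}]{melenk-xenophontos-oberbroeckling13a} and $b$ the constant in the exponent $e^{-b\operatorname*{dist}(x,\partial I)/\varepsilon}$ of (\ref{BL}) (the factor $b$ being absorbed by the rescaling $\varepsilon\mapsto\varepsilon/b$); and (ii) whenever $\varepsilon^{-1}\le 4\lambda_0 p$, the one-element operator ${\mathcal I}_p$ of \cite[Thm.~{3.14}]{schwab98}, \cite[Sec.~{3.2.1}]{melenk-xenophontos-oberbroeckling13a} of degree $p$, applied to a function with the derivative bounds of Proposition~\ref{prop:1D-regularity}(\ref{item:prop:1D-regularity-i}), converges at a fixed exponential rate $e^{-\sigma p}$ in $\|\cdot\|_{\infty,I}$, $\|\cdot\|_{0,I}$ and $\varepsilon\|(\cdot)'\|_{0,I}$ (hence also in $\sqrt{\lambda p\varepsilon}\,\|(\cdot)'\|_{0,I}$, since $\sqrt{\lambda p\varepsilon}\lesssim\sqrt p$ is absorbed into $e^{-\sigma p}$). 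In the asymptotic regime one has $\varepsilon^{-1}\le 4\lambda_0 p$, so I would simply set $I_p u:={\mathcal I}_p u$, choosing ${\mathcal I}_p$ to interpolate at $x=0,1$ so that $I_p u\in{\mathcal S}^p_0(\Delta_{BL}(\lambda,p))$; then $e^{-\sigma p}\le e^{-(\sigma/\lambda_0)\lambda p}$, which gives (\ref{eq:lemma:1D-approximation-10}) with any $\beta\le\sigma/\lambda_0$.

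In the pre-asymptotic regime write $u=w+u^{BL}+r$ as in Proposition~\ref{prop:1D-regularity}. For the analytic part $w$, let $I_p w$ be its piecewise Gau\ss--Lobatto interpolant of degree $p$ on $\Delta_{BL}(\lambda,p)$; it matches $w$ at all mesh nodes and, since $w$ is analytic uniformly in $\varepsilon$, satisfies $\|w-I_p w\|_{\infty,I}+\|w-I_p w\|_{1,I}\lesssim e^{-\sigma p}$, and as $\sqrt{\lambda p\varepsilon}<1/2$ here the balanced term is controlled as well. For $u^{BL}$, I would mimic the construction behind Lemma~\ref{lemma:1D-bdy-layer-approximation} directly on the three-element mesh: on each boundary element $(0,\lambda p\varepsilon)$ and $(1-\lambda p\varepsilon,1)$ use the degree-$p$ polynomial of \cite[Lemma~{3.8}]{melenk-xenophontos-oberbroeckling13a} with $K=\varepsilon^{-1}$, $h=\lambda p\varepsilon$, modified \`a la \cite{schwab-suri96} to take the value $\sqrt\varepsilon\,u^{BL}(\lambda p\varepsilon)$ (resp.\ $\sqrt\varepsilon\,u^{BL}(1-\lambda p\varepsilon)$) at the interior node — this uses only the one-sided bound of (\ref{BL}) valid on that element, and the constraint there reads $h/(pK)=\lambda\le\lambda_0\le\eta$ — and on the middle element take the linear interpolant of these interior nodal values, observing that $\operatorname*{dist}(x,\partial I)\ge\lambda p\varepsilon$ there forces $u^{BL}$ (and its $L^2$-norm derivatives, after the gain $\|e^{-\operatorname*{dist}(\cdot,\partial I)/\varepsilon}\|_{0}\lesssim\sqrt\varepsilon\,e^{-\lambda p}$) to be exponentially small. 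For the remainder $r$, take $I_p r$ to be the piecewise-linear nodal interpolant; since $\varepsilon^{-1}>4\lambda p$ here, $\|r-I_p r\|$ in every relevant norm is $\lesssim\varepsilon^{2-k}e^{-\gamma_r/\varepsilon}\lesssim e^{-4\gamma_r\lambda p}$ (by Proposition~\ref{prop:1D-regularity}, and the $1$D Sobolev embedding for the $L^\infty$ bound). Setting $I_p u:=I_p w+I_p u^{BL}+I_p r$, every summand matches its target at all mesh nodes and $u(0)=u(1)=0$, so $I_p u$ is continuous, vanishes at $0,1$, and lies in ${\mathcal S}^p_0(\Delta_{BL}(\lambda,p))$.

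It then remains to assemble the estimates by the triangle inequality, and this is where the only genuine care is needed: for the layer term one evaluates the building-block bounds at the element size $\lambda p\varepsilon$, so that $e^{-\lambda p\varepsilon/\varepsilon}=e^{-\lambda p}$, and then reconciles the weight $\sqrt{\lambda p\varepsilon}$ of the balanced norm with the element-dependent weights — on the boundary elements $\sqrt{\lambda p\varepsilon}$ matches the natural $h^{1/2}$ exactly, while on the middle element the ratio $\sqrt{\lambda p\varepsilon}/\varepsilon=\sqrt{\lambda p/\varepsilon}$ produces a harmless $\sqrt{\lambda p}$ that is absorbed into $e^{-\lambda p}$ via $\sqrt{t}\,e^{-t}\le C e^{-\beta t}$ for $\beta<1$. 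Adding the $w$-, $u^{BL}$- and $r$-contributions then yields (\ref{eq:lemma:1D-approximation-10}) with $\beta$ equal to the minimum of the exponents produced (e.g.\ $1/2$ from the layer, together with $\sigma/\lambda_0$ and $4\gamma_r$ from the rest), all of which depend only on the constants in (\ref{analytic_data}). I expect the main obstacle to be precisely this bookkeeping — keeping the interval-wise weights and the polynomial-in-$p$ prefactors straight so that everything collapses to a single robust bound $Ce^{-\beta\lambda p}$ — together with the mild care needed to ensure $I_p u\in{\mathcal S}^p_0(\Delta_{BL}(\lambda,p))$; the substantive approximation theory is already packaged in \cite[Lemma~{3.8}]{melenk-xenophontos-oberbroeckling13a}/Lemma~\ref{lemma:1D-bdy-layer-approximation} and in the single-element operator ${\mathcal I}_p$.
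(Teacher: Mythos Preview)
Your proposal is correct and follows essentially the same approach as the paper's own proof, which is very terse: it simply refers to \cite[Thm.~{16}]{melenk97} for the asymptotic case and for the treatment of $w$ and $r$, and appeals to Lemma~\ref{lemma:1D-bdy-layer-approximation} for $u^{BL}$. You have effectively unpacked what those references contain --- the one-element operator ${\mathcal I}_p$ in the asymptotic regime, piecewise interpolation of $w$ and $r$, and the Schwab--Suri modification on the boundary elements for $u^{BL}$ --- and handled the bookkeeping (symmetrizing Lemma~\ref{lemma:1D-bdy-layer-approximation} to both endpoints, absorbing the decay constant $b$, checking nodal continuity and the boundary conditions) that the paper leaves implicit.
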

\begin{proof}
The proof follows the lines of \cite[Thm.~{16}]{melenk97}. 
For case of $p\varepsilon$ sufficiently small, Proposition~\ref{prop:1D-regularity} decomposes the solution 
$u$ as $u = w + u^{BL} + r$. The approximation of $w$ and $r$ is done as in 
\cite[Thm.~{16}]{melenk97}. The treatment of the boundary layer part $u^{BL}$ of \cite[Thm.~{16}]{melenk97} is 
replaced with an appeal to Lemma~\ref{lemma:1D-bdy-layer-approximation}. We remark that 
slightly sharper estimates are possible if one formulates bounds for $u - I_p u$ on the two elements
$(0,\lambda p \varepsilon)$ and $(\lambda p \varepsilon,1)$ separately. 
\end{proof}
\subsection{Robust exponential convergence in a balanced norm}
The goal of this article is to improve on Proposition~\ref{prop:melenk97} by showing 
that the Galerkin error $u - u_{FEM}$ convergences at a robust exponential rate also 
in the balanced norm $\|\cdot\|_{balanced,I}$: 
\begin{thm}
\label{thm:balanced-norm-1D}
Assume (\ref{analytic_data}). Let $u$ solve (\ref{BuvFv}) and $u_{FEM} \in S_0(\lambda,p)$ 
be obtained by (\ref{eq:abstract-FEM}) based on the Spectral Boundary Layer mesh $\Delta_{BL}(\lambda ,p)$. 
Then there exists $\lambda _0 > 0$ (depending solely on $b$ and $f$) 
such that for every $\lambda \in (0,\lambda _0)$ there are constants $C$, $\sigma  > 0$ such that 
for every $\varepsilon \in (0,1]$, $p \in \N$
\begin{equation}
\label{eq:balanced_estimate-1D}
\left\Vert u-u_{FEM}\right\Vert _{0,I}+\sqrt{\varepsilon}\left\Vert \left(
u - u_{FEM}\right) ^{\prime }\right\Vert _{0,I}\leq Ce^{-\sigma p}.
\end{equation}
\end{thm}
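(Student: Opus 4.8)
The plan is to combine the sharpened approximation result of Lemma~\ref{lemma:1D-approximation} with a duality/stability argument that converts the energy-norm bound of Proposition~\ref{prop:melenk97} into the stronger balanced-norm bound. The naive route — C\'ea's lemma in the balanced norm — fails, because ${\mathcal B}_\varepsilon$ is only coercive with respect to the (weaker) energy norm, so quasi-optimality holds only in $\|\cdot\|_{E,I}$. The key observation is that the balanced norm differs from the energy norm only in the weight on the $H^1$-seminorm: $\|v\|_{balanced,I}^2 = \|v\|_{0,I}^2 + \varepsilon\|v'\|_{0,I}^2$ versus $\|v\|_{E,I}^2 \approx \|v\|_{0,I}^2 + \varepsilon^2\|v'\|_{0,I}^2$. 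Thus the only quantity that is not already controlled robustly by Proposition~\ref{prop:melenk97} is $\sqrt{\varepsilon}\,\|(u-u_{FEM})'\|_{0,I}$, and this extra factor $\varepsilon^{-1/2}$ on the derivative is precisely what we must tame.

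The strategy I would follow is the one advertised in the introduction: decompose the Galerkin error relative to the approximant $I_p u$ from Lemma~\ref{lemma:1D-approximation}, writing $u - u_{FEM} = (u - I_p u) + (I_p u - u_{FEM})$. The first term $u - I_p u$ is handled directly: Lemma~\ref{lemma:1D-approximation} gives $\|u - I_p u\|_{0,I} + \sqrt{\lambda p \varepsilon}\,\|(u-I_p u)'\|_{0,I} \le C e^{-\beta\lambda p}$, and since $\sqrt{\varepsilon} \le \sqrt{\lambda p \varepsilon}$ for $p \ge 1$ (after possibly absorbing $\lambda$), this controls $\|u - I_p u\|_{balanced,I}$ at a robust exponential rate. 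For the discrete remainder $z := I_p u - u_{FEM} \in S_0(\lambda,p)$, I would use an inverse inequality to pass from $\|z'\|_{0,I}$ to $\|z\|_{0,I}$: on the Spectral Boundary Layer mesh the smallest element has length $\lambda p \varepsilon$, so the inverse estimate reads $\|z'\|_{0,I} \lesssim \frac{p}{\lambda p \varepsilon}\|z\|_{0,I} = \frac{1}{\lambda\varepsilon}\|z\|_{0,I}$, hence $\sqrt{\varepsilon}\,\|z'\|_{0,I} \lesssim \lambda^{-1}\varepsilon^{-1/2}\|z\|_{0,I}$. This is too lossy by itself, so the $L^2$-norm $\|z\|_{0,I}$ must be shown to be exponentially small \emph{with a spare factor $\sqrt{\varepsilon}$} — and this is where a more refined mesh-localized decomposition (splitting $z$ into its contribution on the boundary-layer elements $(0,\lambda p\varepsilon)$, $(1-\lambda p\varepsilon,1)$ and on the coarse interior element, as in idea~(2) of the introduction, exploiting strengthened Cauchy–Schwarz inequalities) is needed to gain the extra $\varepsilon$-power on the layer pieces rather than losing it.

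The main obstacle, then, is controlling $\|z\|_{0,I} = \|I_p u - u_{FEM}\|_{0,I}$ sharply enough. I expect the route to be: (a) bound $\|z\|_{E,I}$ by the triangle inequality from Proposition~\ref{prop:melenk97} and Lemma~\ref{lemma:1D-approximation}, giving $\|z\|_{E,I} \le C e^{-\sigma p}$ — in particular $\|z\|_{0,I} \le C e^{-\sigma p}$; (b) to get the crucial extra $\sqrt{\varepsilon}$, estimate $\|z\|_{0,I}$ on the coarse interior element and on the two thin layer elements separately. On the interior element, $z$ is a polynomial that is small in energy norm and one exploits that $u$ restricted there is smooth so a sharper estimate holds; on each layer element $J$ of width $O(p\varepsilon)$ one has $\|z\|_{0,J} \le |J|^{1/2}\|z\|_{\infty,J} \lesssim \sqrt{p\varepsilon}\,\|z\|_{\infty,J}$, and $\|z\|_{\infty,J}$ is controlled via a 1D inverse/Sobolev inequality from $\|z\|_{E,J}$. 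Carefully tracking the $\varepsilon$-powers through this localization, combining with step~(a), and feeding the result back through the inverse inequality for $\|z'\|_{0,I}$ should yield $\sqrt{\varepsilon}\,\|z'\|_{0,I} \le C e^{-\sigma' p}$, and adding the $u - I_p u$ contribution completes the bound~\eqref{eq:balanced_estimate-1D}. The delicate bookkeeping of which factors of $\varepsilon$, $p$, $\lambda$ appear where — and ensuring $\lambda$ can be chosen (independently of $\varepsilon$, $p$) small enough that all the exponential rates survive with a single common $\sigma > 0$ — is the part that requires the most care; the rest is assembling already-available pieces.
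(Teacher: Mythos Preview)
Your proposal has a genuine gap: the ``crucial extra $\sqrt{\varepsilon}$'' that you hope to extract from $\|z\|_{0,I_\varepsilon}$ (with $z=I_pu-u_{FEM}$) is not obtainable by the route you sketch. On a layer element $J=(0,\lambda p\varepsilon)$ you have $z(0)=0$, hence $\|z\|_{0,J}\le\lambda p\varepsilon\,\|z'\|_{0,J}$; but the only independent information on $z'$ is the energy bound $\varepsilon\|z'\|_{0,I}\lesssim e^{-\sigma p}$, i.e.\ $\|z'\|_{0,J}\lesssim\varepsilon^{-1}e^{-\sigma p}$. Feeding this back yields $\|z\|_{0,J}\lesssim\lambda p\,e^{-\sigma p}$ (no spare $\sqrt{\varepsilon}$), and then the inverse estimate on $J$ gives $\sqrt{\varepsilon}\,\|z'\|_{0,J}\lesssim p^2\varepsilon^{-1/2}e^{-\sigma p}$, which is \emph{not} robust. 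The circularity is intrinsic: inverse estimates on $J$ lose $(p\varepsilon)^{-1}$, Poincar\'e on $J$ gains $p\varepsilon$, and the energy bound on $z'$ carries $\varepsilon^{-1}$; shuffling these always leaves a residual $\varepsilon^{-1/2}$. Even combining with the stable decomposition $z=z_1+z_\varepsilon$ (Lemma~\ref{lemma:almost-orthogonal}) does not help, because its input in the critical term is $\|z\|_{0,I_\varepsilon}$, and that is precisely the quantity you cannot improve.

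The missing idea is the one the paper actually uses: compare $u_{FEM}$ not to $I_pu$ but to the \emph{weighted $L^2$-projection} ${\mathcal P}_0u$ onto $S_0(\lambda,p)$ with respect to the reduced form ${\mathcal B}_0(u,v)=\langle bu,v\rangle_I$. The double Galerkin orthogonality (${\mathcal B}_\varepsilon$-orthogonality of $u-u_{FEM}$ and ${\mathcal B}_0$-orthogonality of $u-{\mathcal P}_0u$) yields the identity
\[
\|u_{FEM}-{\mathcal P}_0u\|_{E,I}^2=\varepsilon^2\bigl\langle(u-{\mathcal P}_0u)',(u_{FEM}-{\mathcal P}_0u)'\bigr\rangle_I,
\]
which eliminates the $L^2$-contribution entirely and gives $\varepsilon\|(u_{FEM}-{\mathcal P}_0u)'\|_{0,I}\le\varepsilon\|(u-{\mathcal P}_0u)'\|_{0,I}$. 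The problem thus reduces to $\|(u-{\mathcal P}_0u)'\|_{0,I}\lesssim\varepsilon^{-1/2}e^{-\sigma p}$. Writing $u-{\mathcal P}_0u=(u-I_pu)-{\mathcal P}_0(u-I_pu)$ and applying Lemma~\ref{lemma:almost-orthogonal} to ${\mathcal P}_0(u-I_pu)$ now succeeds, because the input to the stability estimate is $u-I_pu$, for which Lemma~\ref{lemma:1D-approximation} provides the sharp bound $\|(u-I_pu)'\|_{0,I}\lesssim\varepsilon^{-1/2}e^{-\beta p}$ --- half a power of $\varepsilon$ better than what is available for $z'$. That half-power is exactly what your argument lacks, and it comes from the ${\mathcal P}_0$-orthogonality trick, not from any refinement of inverse or localization estimates.
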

The remainder of this section is devoted to the proof of Theorem~\ref{thm:balanced-norm-1D}. 
Before that, we note a consequence of Theorem~\ref{thm:balanced-norm-1D}: 
\begin{cor}
\label{cor:max-norm-estimate-1D}
Under the assumptions of Theorem~\ref{thm:balanced-norm-1D} there is $\lambda _0 > 0$ such that 
for every $\lambda \in (0,\lambda _0)$ there are constants $C$, $\sigma > 0$ such that 
for all $\varepsilon \in (0,1]$, $p \in \mathbb{N}$
$$
\|u - u_{FEM}\|_{\infty,I} \leq C e ^{-\sigma p}. 
$$
\end{cor}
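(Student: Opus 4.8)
The plan is to deduce the maximum-norm estimate of Corollary~\ref{cor:max-norm-estimate-1D} from the balanced-norm estimate of Theorem~\ref{thm:balanced-norm-1D} by combining the latter with the approximation result of Lemma~\ref{lemma:1D-approximation} and a (scaled) one-dimensional Sobolev inequality applied elementwise on the two mesh intervals of $\Delta_{BL}(\lambda,p)$. The point is that the balanced norm controls $\|\cdot\|_{0,I}$ with weight $1$ and $\|(\cdot)'\|_{0,I}$ with weight $\sqrt{\varepsilon}$, whereas the Sobolev embedding on an interval $J$ of length $|J|$ reads $\|\phi\|_{\infty,J} \lesssim |J|^{-1/2}\|\phi\|_{0,J} + |J|^{1/2}\|\phi'\|_{0,J}$; so the embedding is ``free'' on the fine layer element, whose length is $\lambda p \varepsilon$ — here $|J|^{1/2} = \sqrt{\lambda p\varepsilon} \lesssim \sqrt{p}\sqrt{\varepsilon}$, which against the $\sqrt{\varepsilon}$ weight costs only an algebraic factor $\sqrt{p}$ that is absorbed into the exponential — but it is \emph{not} free on the coarse element $(\lambda p\varepsilon, 1)$, where $|J|^{1/2} \sim 1 \gg \sqrt{\varepsilon}$ in the singularly perturbed regime.

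Concretely, first I would write $u - u_{FEM} = (u - I_p u) + (I_p u - u_{FEM})$, where $I_p u \in {\mathcal S}^p_0(\Delta_{BL}(\lambda,p))$ is the approximant from Lemma~\ref{lemma:1D-approximation}. The first term is handled directly by Lemma~\ref{lemma:1D-approximation}, which already gives $\|u - I_p u\|_{\infty,I} \leq C e^{-\beta\lambda p}$. For the second term $\chi := I_p u - u_{FEM} \in S_0(\lambda,p)$, I would apply the scaled Sobolev inequality on the fine element $J_1 = (0,\lambda p\varepsilon)$ (and by symmetry on the right layer element): $\|\chi\|_{\infty,J_1} \lesssim (\lambda p\varepsilon)^{-1/2}\|\chi\|_{0,J_1} + (\lambda p\varepsilon)^{1/2}\|\chi'\|_{0,J_1}$. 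Now $(\lambda p\varepsilon)^{-1/2}\|\chi\|_{0,J_1}$ is dominated by $(\lambda p\varepsilon)^{-1/2}\|\chi\|_{balanced,I}$ — this is the only place where a negative power of $\varepsilon$ appears, and it is compensated because the balanced-norm bound on $\chi$ (via the triangle inequality from Theorem~\ref{thm:balanced-norm-1D} and Lemma~\ref{lemma:1D-approximation}) carries a full factor $C e^{-\sigma p}$, and $\varepsilon^{-1/2} e^{-\sigma p/2}$ stays bounded (indeed decays) since in the non-asymptotic regime $\varepsilon^{-1} \lesssim$ anything is false, but $\varepsilon^{-1/2}\le $ … — wait, this requires care; the clean route is that in the asymptotic regime $\lambda p\varepsilon \gtrsim 1$ one has $\varepsilon^{-1/2}\lesssim \sqrt{p}$ and in the layer regime $\varepsilon$ is small but then $\lambda p \varepsilon^{-1/2}$ is not bounded — so in fact one should \emph{not} try to bound $\chi$ on $J_1$ via the $\varepsilon^{-1/2}$-scaled $L^2$ term. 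The honest approach is to use an \emph{inverse inequality}: on the single layer element $J_1$ for a polynomial $\chi$ of degree $p$, $\|\chi\|_{\infty,J_1} \lesssim p \, |J_1|^{-1/2}\|\chi\|_{0,J_1}$ is still the same issue. The resolution used in the literature, which I would adopt, is that $u_{FEM}$ itself satisfies a robust exponential maximum-norm bound \emph{for its difference from $I_p u$} because both are polynomials of degree $p$ on each element and $\|I_p u - u_{FEM}\|_{0,I} \leq C e^{-\sigma p}$ by the $L^2$ part of the balanced estimate, so an inverse inequality in $L^\infty$ versus $L^2$ on $J_1$ costs $p\,(\lambda p \varepsilon)^{-1/2}$; this is bad for small $\varepsilon$.

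Thus the real argument — and the step I expect to be the main obstacle — is to avoid the naive inverse-inequality loss on the thin layer element by instead comparing $u_{FEM}$ against a \emph{different}, pointwise-good comparison function. The cleanest plan: on the coarse element $(\lambda p \varepsilon, 1)$ the scaled Sobolev inequality is harmless since $|J|^{1/2}\sim 1 \sim$ the weight needed, and there $\|\chi'\|_{0,(\lambda p\varepsilon,1)} \le \varepsilon^{-1/2}\sqrt{\varepsilon}\|\chi'\|_{0,I}\le \varepsilon^{-1/2}\|\chi\|_{balanced,I}$ — again an $\varepsilon^{-1/2}$. The way the paper surely intends this is: one does \emph{not} split off $u_{FEM}$ at all for the $L^\infty$ estimate on the coarse part; instead one notes that on $(\lambda p\varepsilon,1)$ the layer is exponentially small ($e^{-\xi/\varepsilon} = e^{-\lambda p}$), the smooth-part error is handled by standard $hp$ theory in $L^\infty$, and the Galerkin error $u - u_{FEM}$ there is estimated via a local $L^\infty$ bound (e.g.\ Sobolev on the coarse element using the full-strength $H^1$ bound from the \emph{energy} norm, Proposition~\ref{prop:melenk97}, which gives $\varepsilon\|(u-u_{FEM})'\|_{0,I}\le Ce^{-\sigma p}$, hence $\|(u-u_{FEM})'\|_{0,(\lambda p\varepsilon,1)} \le \varepsilon^{-1} C e^{-\sigma p}$ — still an inverse power). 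So the genuinely load-bearing observation must be the stronger \emph{pointwise/balanced} control from Theorem~\ref{thm:balanced-norm-1D} combined with the fact that on the coarse element the element length times $p^2$ (inverse inequality constant) is $O(1)$, not $O(\varepsilon)$: there $\|\chi\|_{\infty,(\lambda p\varepsilon,1)} \lesssim \|\chi\|_{0,(\lambda p\varepsilon,1)} + \|\chi'\|_{0,(\lambda p\varepsilon,1)}$, and the second term is bounded by $\varepsilon^{-1/2}\|\chi\|_{balanced,I} \le \varepsilon^{-1/2} C e^{-\sigma p}$; I then choose $\sigma$ in the theorem large enough (by shrinking $\lambda$) that $\varepsilon^{-1/2} e^{-\sigma p} \le e^{-\sigma' p}$ uniformly — this \emph{does} work provided we are content with, on the layer element $(0,\lambda p\varepsilon)$, using $(\lambda p\varepsilon)^{1/2}\|\chi'\|_{0,(0,\lambda p\varepsilon)} \le \sqrt{\lambda p}\,\sqrt{\varepsilon}\|\chi'\|_{0,I}\le \sqrt{\lambda p}\,\|\chi\|_{balanced,I}$, which is clean (only an algebraic $\sqrt{p}$), and $(\lambda p\varepsilon)^{-1/2}\|\chi\|_{0,(0,\lambda p\varepsilon)} \le (\lambda p \varepsilon)^{-1/2} C e^{-\sigma p}$, again absorbed after enlarging $\sigma$. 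Hence the proof outline is: (1) triangle inequality with $I_p u$; (2) Lemma~\ref{lemma:1D-approximation} for $u - I_p u$ in $L^\infty$; (3) Theorem~\ref{thm:balanced-norm-1D} plus Lemma~\ref{lemma:1D-approximation} to get $\|I_p u - u_{FEM}\|_{balanced,I}\le Ce^{-\sigma p}$; (4) elementwise scaled Sobolev inequality on the two intervals of $\Delta_{BL}(\lambda,p)$, in each case bounding the $H^1$-seminorm contribution by $\varepsilon^{-1/2}\|\cdot\|_{balanced,I}$ and the $L^2$ contribution by the appropriate negative power of the element length times $\|\cdot\|_{balanced,I}$; (5) absorb all resulting algebraic/negative-$\varepsilon$-power factors into the exponential by choosing $\lambda_0$ (hence $\sigma$) suitably — this last absorption, and verifying it is uniform in $\varepsilon\in(0,1]$, is the only delicate point.
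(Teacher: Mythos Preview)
Your proposal has a genuine gap at step~(5): the claim that factors like $\varepsilon^{-1/2}$ or $(\lambda p\varepsilon)^{-1/2}$ can be ``absorbed into the exponential by choosing $\lambda_0$ (hence $\sigma$) suitably'' is simply false. The constants $C,\sigma$ in Theorem~\ref{thm:balanced-norm-1D} depend on $\lambda$ but \emph{not} on $\varepsilon$; for fixed $p$ and $\lambda$, the quantity $\varepsilon^{-1/2}e^{-\sigma p}$ is unbounded as $\varepsilon\to 0$. No choice of $\lambda_0$ can make $\varepsilon^{-1/2}e^{-\sigma p}\le e^{-\sigma' p}$ hold uniformly in $\varepsilon\in(0,1]$. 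This breaks both of your elementwise estimates: on the thin element the term $(\lambda p\varepsilon)^{-1/2}\|\chi\|_{0,J_1}$, and on the coarse element the term $\|\chi'\|_{0,(\lambda p\varepsilon,1)}\le \varepsilon^{-1/2}\|\chi\|_{balanced,I}$, each produce an uncontrolled $\varepsilon^{-1/2}$.

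The paper avoids these losses with two different, simple devices. On the thin element it does not use the general Sobolev embedding (which forces the bad $|J|^{-1/2}\|\cdot\|_{0,J}$ term) but the Poincar\'e-type bound coming from the homogeneous boundary condition: since $(u-u_{FEM})(0)=0$, one has $|u(x)-u_{FEM}(x)|=\bigl|\int_0^x(u-u_{FEM})'\,dt\bigr|\le \sqrt{\lambda p\varepsilon}\,\|(u-u_{FEM})'\|_{0,I}$, and now the $\sqrt{\varepsilon}$ from the element length cancels the $\varepsilon^{-1/2}$ in the balanced-norm estimate of Theorem~\ref{thm:balanced-norm-1D} exactly, leaving $C\sqrt{\lambda p}\,e^{-\sigma p}$. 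On the coarse element it uses your splitting $u-u_{FEM}=(u-I_pu)+(I_pu-u_{FEM})$, handles $u-I_pu$ by Lemma~\ref{lemma:1D-approximation}, and for the polynomial $I_pu-u_{FEM}$ invokes the $L^\infty$--$L^2$ inverse estimate on the $O(1)$-length element (constant $\sim p^2$) together with the $L^2$-bound from Proposition~\ref{prop:melenk97}; no derivative of $\chi$, hence no $\varepsilon^{-1/2}$, ever appears. You in fact brushed against both ideas in your exploration (you noted an inverse estimate, but only on the thin element where it fails; and you could have used $\chi(0)=0$ on the thin element), but the final plan you committed to does not use either and does not close.
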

\begin{proof}
We first observe that standard inverse estimates yield the result when 
$\lambda p \varepsilon \ge 1/4$, in which case the mesh consists of a single element. 
Let us therefore consider the 3-element case $\lambda p \varepsilon < 1/4$.
Using the boundary condition at $x = 0$ we can write 
\begin{equation*}
\left\vert u(x)-u_{FEM}(x)\right\vert =\left\vert \int_{0}^{x}\left(
u(t)-u_{FEM}(t)\right) ^{\prime }dt\right\vert .
\end{equation*}
Assume first that $x\in (0,\lambda p\varepsilon ].$ Then by the Cauchy-Schwarz inequality
and (\ref{eq:balanced_estimate-1D}) 
\[
\left\vert u(x)-u_{FEM}(x)\right\vert  \leq 
\sqrt{\lambda p\varepsilon }\left( C \varepsilon^{-1/2}e^{-\sigma p}\right)  
\leq C\sqrt{\lambda p}e^{-\sigma p}.
\]
The same technique works if $x \in [1-\lambda p \varepsilon, 1)$. For 
$x \in [\lambda p \varepsilon, 1-\lambda p \varepsilon]$, we write 
with the approximation $I_p u$ of Lemma~\ref{lemma:1D-approximation}
and the triangle inequality 
$|u(x) - u_{FEM}(x)| \leq |u(x) - I_p u(x)| + |I_p u(x) - u_{FEM}(x)|$. 
Lemma~\ref{lemma:1D-approximation} takes  care of $|u(x) - I_p u(x)|$ while
$|I_p u(x) - u_{FEM}(x)|$ is treated with 
the 
standard polynomial inverse estimate $\|I_p u - u_{FEM} \|_{\infty,[\lambda p \varepsilon,1-\lambda p \varepsilon]} 
\leq C p^2 \|I_p u - u_{FEM}\|_{0,I}$ and the energy estimate of 
Proposition~\ref{prop:melenk97}. 
\end{proof}

The proof of Theorem~\ref{thm:balanced-norm-1D} is done in two steps: 
First, in Section~\ref{sec:reduction-to-H1-stability-1D} we reduce the analysis to
an $H^1$-stability analysis of a projection operator ${\mathcal P}_0$ that is closely connected with
the reduced/limit problem. Next, we recognize that polynomial inverse estimates will be needed for 
the $H^1$-stability analysis. In order to minimize the adverse impact of small elements of 
size $O(\varepsilon p)$ on inverse estimates, we work with a decomposition of the space $S(\lambda,p)$ 
into global polynomials and polynomials supported by the small elements near the boundary. 
Section~\ref{sec:stable-decompositions-1D} provides the necessary
strengthened Cauchy-Schwarz inequality, and Lemma~\ref{lemma:almost-orthogonal} formulates the 
$H^1$-stability results for ${\mathcal P}_0$. 
Finally, in Section~\ref{sec:conclusion-of-proof-1D} we conclude the 
proof of Theorem~\ref{thm:balanced-norm-1D}. 
\subsubsection{Reduction to an $H^1$-stability analysis for a reduced problem}
\label{sec:reduction-to-H1-stability-1D}
%
Since the desired estimate in the ``asymptotic'' case $\lambda p \varepsilon \ge 1/4$ is easily shown 
(see the formal proof of Theorem~\ref{thm:balanced-norm-1D} at the end of the section) we will 
focus in the following analysis on the 3-element case, i.e., $\lambda p \varepsilon < 1/4$. 

We begin by defining the bilinear form
\begin{equation}
\label{B0}
{\mathcal{B}}_{0}\left( u,v\right) =\left\langle bu,v\right\rangle _{I},
\end{equation}
corresponding to the reduced/limit problem. We also introduce the operator 
${\mathcal{P}}_{0}:L^{2}(I)\rightarrow S_0(\lambda ,p)$ by the orthogonality 
condition\footnote{Note the subtle point that $S_0(\lambda ,p)\subset H_{0}^{1}(I)$; in contrast,
the reduced problem doesn't involve boundary conditions.} 
\begin{equation}
{\mathcal{B}}_{0}\left( u-{\mathcal{P}}_{0}u,v\right) =0 \quad  \forall \; v\in S_0(\lambda ,p).  
\label{Pi0}
\end{equation}
Then, by Galerkin orthogonality satisfied by $u-u_{FEM}$ (with respect to the bilinear form 
${\mathcal B}_\varepsilon$) and by $u - {\mathcal P}_0 u$ (with respect to the bilinear  form 
${\mathcal B}_0$)
we have
\begin{eqnarray}
\label{eq:galerkin-orthogonality-1D}
\left\Vert u_{FEM}-{\mathcal{P}}_{0}u\right\Vert _{E,I}^{2} &=&
{\mathcal{B}}_{\varepsilon }\left( u_{FEM}-{\mathcal{P}}_{0}u,u_{FEM}-{\mathcal{P}}_{0}u\right)  \\
\nonumber 
&=&{\mathcal{B}}_{\varepsilon }\left( u-{\mathcal{P}} _{0}u,u_{FEM}-{\mathcal{P}} _{0}u\right) \\
\nonumber 
&=&\varepsilon ^{2}\left\langle \left( u-{\mathcal{P}}_{0}u\right)^{\prime},\left( u_{FEM}-
{\mathcal{P}} _{0}u\right) ^{\prime }\right\rangle _{I} \\
&\leq& \varepsilon^2 \Vert \left( u-{\mathcal{P}}_{0}u\right)^{\prime} \Vert_{0,I} 
\Vert \left( u_{FEM}-{\mathcal{P}}_{0}u\right)^{\prime} \Vert_{0,I}. \notag
\end{eqnarray}
Hence
\begin{equation*}
\varepsilon \left\Vert \left( u_{FEM}-{\mathcal{P}} _{0}u\right) ^{\prime }\right\Vert
_{0,I}\leq \left\Vert u_{FEM}-{\mathcal{P}} _{0}u\right\Vert _{E,I}\leq \varepsilon
\left\Vert \left( u-{\mathcal{P}} _{0}u\right) ^{\prime }\right\Vert _{0,I}.
\end{equation*}
The triangle inequality will then allow us to infer from this the exponential convergence
result (\ref{eq:balanced_estimate-1D}) provided we can show that 
\begin{equation*}
\left\Vert \left( u-{\mathcal{P}} _{0}u\right)^{\prime }\right\Vert _{0,I} \leq 
C\varepsilon ^{-1/2}e^{-\sigma p},
\end{equation*}
for some $C$ and $\sigma >0$ independent of $\varepsilon$
and $p$. This calculation shows that we have to study the $H^{1}$-stability of the 
operator ${\mathcal{P}}_{0}$ on {\emph{Spectral Boundary Layer meshes}}. 
This is achieved in Lemma~\ref{lemma:almost-orthogonal}. Subsequently in 
Lemma~\ref{lemma:estimate-Pi0u-scalar}, we control $\|(u - {\mathcal P}_0 u)^\prime\|_{0,I}$. 

\subsubsection{Stable decompositions of the spaces $S(\lambda,p)$}
\label{sec:stable-decompositions-1D}
Asymptotic expansions are a tool to decompose the solution $u$ into
components on the different length scales. We need to mimick this on the
discrete level for ${\mathcal{P}} _{0}u$. We define (implicitly assuming $\lambda p \varepsilon <1/4$)
the layer region
$$
I_\varepsilon:= [0,\lambda p \varepsilon] \cup [1-\lambda p \varepsilon,1]
$$
and the following two subspaces of $S(\lambda ,p)$: 
\begin{eqnarray}
\label{S1}
S_{1}& = & {\mathcal S}^{p}(\Delta ), \qquad \Delta =\{0,1\},  \\
\label{Sepsilon}
S_{\varepsilon }&=&\{u\in S(\lambda ,p)\,:\,\operatorname*{supp} u\subset I_\varepsilon\}. 
\end{eqnarray}
Note that the spaces $S_{1}$ and $S_{\varepsilon }$ do not carry any
boundary conditions at the endpoints of $I$ -- this is a reflection of the fact
that the reduced problem does not satisfy the homogeneous Dirichlet boundary
conditions. It is important for the further developments to observe that for 
the three-element mesh of sufficiently small $\lambda p\varepsilon$, there holds 
${S}(\lambda ,p)=S_{1}\oplus S_{\varepsilon }$. 
In other words, each $z\in {S} (\lambda ,p)$ has a unique decomposition 
$z=z_{1}+z_{\varepsilon }$ with $z_{1}\in S_{1}$ and $z_{\varepsilon }\in S_{\varepsilon }$, 
when $\lambda p\varepsilon <1/4.$ We also have the inverse estimates 
\begin{eqnarray}
\Vert z^{\prime }\Vert _{0,I} &\leq& 
Cp^{2}\Vert z\Vert _{0,I}\quad \forall  z\in S_{1},  \label{inv_est1} \\
\Vert z^{\prime }\Vert _{0,I} &\leq& C\frac{p^{2}}{\lambda p\varepsilon }
\Vert z\Vert _{0,I} \quad  \forall  z\in S_{\varepsilon },
\label{inv_est2}
\end{eqnarray}
by \cite[Thm.~{3.91}]{schwab98}. Furthermore, we have the following 
strengthened Cauchy-Schwarz inequality:

\begin{lem} [Strengthened Cauchy-Schwarz inequality]
\label{SCS} Let $\mathcal{B}_{0}$ be given by (\ref{B0}). Then, there is a 
constant $C > 0$ depending solely on $\|b\|_{\infty,I}$ and $\inf_{x \in I} b(x)$ such that 
\begin{equation*}
\left\vert {\mathcal{B}}_{0}\left( u,v\right) \right\vert \leq C \min\{1,\sqrt{\lambda 
p\varepsilon }p\} \left\Vert u\right\Vert _{0,I}\left\Vert v\right\Vert
_{0,I_{\varepsilon }}\quad \forall  u\in S_{1},v\in S_{\varepsilon }.
\end{equation*}
\end{lem}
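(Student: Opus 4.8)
The plan is to prove the strengthened Cauchy--Schwarz inequality
$$
\left\vert {\mathcal B}_0(u,v)\right\vert \le C\min\{1,\sqrt{\lambda p\varepsilon}\,p\}\,\|u\|_{0,I}\,\|v\|_{0,I_\varepsilon}
\qquad \forall u\in S_1,\ v\in S_\varepsilon
$$
by exploiting the crucial fact that $v\in S_\varepsilon$ is supported in the two thin boundary strips of total length $O(\lambda p\varepsilon)$, while $u\in S_1$ is a global polynomial of degree $p$. Since ${\mathcal B}_0(u,v)=\langle bu,v\rangle_I=\langle bu,v\rangle_{I_\varepsilon}$, the bound $|{\mathcal B}_0(u,v)|\le \|b\|_{\infty,I}\|u\|_{0,I_\varepsilon}\|v\|_{0,I_\varepsilon}\le \|b\|_{\infty,I}\|u\|_{0,I}\|v\|_{0,I_\varepsilon}$ gives the ``$1$'' in the minimum for free. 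The whole point is therefore the other branch: to show $\|u\|_{0,I_\varepsilon}\lesssim \sqrt{\lambda p\varepsilon}\,p\,\|u\|_{0,I}$ for global polynomials $u\in\Pi_p$.

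First I would reduce to a single boundary strip, say $[0,\delta]$ with $\delta=\lambda p\varepsilon$, and affinely rescale: writing $u(x)=q(2x/\delta-1)$ with $q\in\Pi_p(\Iref)$ and similarly stretching $[0,1]$ does not help directly, so instead I would keep $I=(0,1)$ fixed and use a polynomial inverse-type estimate that controls the $L^2$-mass of a degree-$p$ polynomial on a short subinterval near an endpoint. The relevant tool is the Markov-type/Nikolskii-type inequality: for $q\in\Pi_p$ on $[-1,1]$ one has $\|q\|_{0,[-1,-1+2t]}\lesssim \sqrt{t}\,\|q\|_{\infty,[-1,1]}$ and the Nikolskii inequality $\|q\|_{\infty,[-1,1]}\lesssim p\,\|q\|_{0,[-1,1]}$, which together yield $\|q\|_{0,[-1,-1+2t]}\lesssim \sqrt{t}\,p\,\|q\|_{0,[-1,1]}$ for any $t\in(0,1]$. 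Transplanting this to $[0,1]$ with $t=\delta=\lambda p\varepsilon$ gives exactly $\|u\|_{0,[0,\delta]}\lesssim\sqrt{\lambda p\varepsilon}\,p\,\|u\|_{0,I}$; the same holds on $[1-\delta,1]$, and adding the two strips gives $\|u\|_{0,I_\varepsilon}\lesssim\sqrt{\lambda p\varepsilon}\,p\,\|u\|_{0,I}$. Combining with $|{\mathcal B}_0(u,v)|\le\|b\|_{\infty,I}\|u\|_{0,I_\varepsilon}\|v\|_{0,I_\varepsilon}$ yields the second branch, and taking the minimum of the two bounds completes the proof, with $C$ depending only on $\|b\|_{\infty,I}$ (the dependence on $\inf b$ is presumably a typo or used only for uniformity of constants elsewhere).

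The step I expect to be the main obstacle — or at least the one requiring care — is establishing the sharp $L^2$-to-$L^2$ localization estimate $\|q\|_{0,[-1,-1+2t]}\lesssim\sqrt t\,p\,\|q\|_{0,[-1,1]}$ with a constant independent of both $p$ and $t$. One clean route is: $\|q\|_{0,[-1,-1+2t]}^2\le 2t\,\|q\|_{\infty,[-1,-1+2t]}^2\le 2t\,\|q\|_{\infty,[-1,1]}^2$, then invoke the standard Nikolskii inequality $\|q\|_{\infty,[-1,1]}^2\le C p^2\|q\|_{0,[-1,1]}^2$ (e.g. from the bound on the $L^\infty$-norm of Legendre polynomials, or as in \cite[Ch.~3]{schwab98}). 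This is crude near the endpoint but sufficient, since we only need the factor $\sqrt t\,p$, not the optimal $t$-dependence. An alternative, slightly sharper argument uses the fact that near $x=-1$ the Gauss--Lobatto-type weight $\sqrt{1-x^2}$ is comparable to $\sqrt t$, combined with a weighted polynomial inequality, but the simple version above already delivers the claimed constant depending only on $\|b\|_{\infty,I}$ and a universal polynomial-inequality constant. Once this localization bound is in hand, the rest is a two-line combination of Cauchy--Schwarz, the support property of $S_\varepsilon$, and the triangle inequality over the two strips.
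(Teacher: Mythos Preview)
Your proposal is correct and follows essentially the same route as the paper: both arguments use the support property of $S_\varepsilon$ to reduce to the thin strips, bound $b$ by $\|b\|_{\infty,I}$, and then invoke the polynomial inverse (Nikolskii) estimate $\|\pi\|_{\infty,I}\lesssim p\,\|\pi\|_{0,I}$ together with $\|\pi\|_{0,[0,\delta]}\le\sqrt{\delta}\,\|\pi\|_{\infty,I}$ to obtain the factor $\sqrt{\lambda p\varepsilon}\,p$. Your observation that only $\|b\|_{\infty,I}$ actually enters the argument is also accurate.
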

\begin{proof} 
The standard Cauchy-Schwarz inequality yields $|{\mathcal B}_0(u,v)| \leq \|b\|_{\infty,I} \|u\|_{0,I} \|v\|_{0,I}$, 
which accounts for the ``$1$'' in the minimium. 

Let $I_{1}=(0,\delta _{1})$ and $I_{2}=(0,\delta _{2})$ be two
intervals with $\delta _{1}<\delta _{2}$. Consider polynomials $\pi _{1}$
and $\pi _{2}$ of degree $p$. Then, using an inverse inequality \cite[{eq. (3.6.4)}]{schwab98}, 
\begin{equation*}
\left\vert \int_{I_{1}}\pi _{1}(x)\pi _{2}(x)\,dx\right\vert \leq 
\int_{I_{1}} \left\vert \pi _{1}(x) \right\vert \left\vert \pi _{2}(x) \right\vert\,dx \leq 
C\sqrt{\frac{\delta_{1}}{\delta_{2}}}p\Vert \pi_{1}\Vert _{0,I_{2}}\Vert \pi_{2}\Vert _{0,I_{1}}.
\end{equation*}
The result follows by taking $\delta _{1}=\lambda p\varepsilon$, $\delta _{2}=1$. 
\end{proof}
 
As already mentioned, since ${S}(\lambda ,p)=S_{1}\oplus S_{\varepsilon }$ 
when $\lambda p \varepsilon <1/4,$ we can uniquely decompose ${\mathcal{P}} _{0}u$ into 
components in $S_{1}$ and $S_{\varepsilon }$. The Strengthened Cauchy-Schwarz inquality
of Lemma~\ref{SCS} 
allows us to quantify the size of these contributions: 

\begin{lem}[stability of ${\mathcal P}_0$]
\label{lemma:almost-orthogonal} There exist constants $C$, $c>0$
depending solely on $\inf_{x \in I} b(x) > 0$ and $\|b\|_{\infty,I}$ such that the 
following is true
under the assumption
\begin{equation}
\sqrt{\lambda p\varepsilon }p\leq c:
\label{eq:discrete-scale-resolution-scalar}
\end{equation}
For each $z\in L^{2}(I)$, the (unique) decomposition of
\begin{equation*}
{\mathcal{P}}_{0}z=z_{1}+z_{\varepsilon}
\end{equation*}
into the components $z_1 \in S_{1}$ and $z_{\varepsilon} \in S_{\varepsilon }$ satisfies 
\begin{eqnarray}
\Vert z_{1}\Vert _{0,I} &\leq &C\Vert z\Vert _{0,I},  \label{z1} \\
\Vert z_{\varepsilon }\Vert _{0,I} &\leq & 
C\{\Vert z\Vert _{0,I_{\varepsilon}}+\sqrt{\lambda p\varepsilon }p\Vert z\Vert _{0,I}\}.  
\label{zepsilon}
\end{eqnarray}
Furthermore, 
\begin{eqnarray}
\Vert z_{1}^\prime\Vert _{0,I} &\leq &C p^2\Vert z\Vert _{0,I},  
\label{z1-H1} \\
\Vert z_{\varepsilon }^\prime\Vert _{0,I} &\leq & 
C\left\{ \frac{p^2}{\lambda p \varepsilon} \Vert z\Vert _{0,I_{\varepsilon}}
+(\lambda p\varepsilon )^{-1/2} p^3\Vert z\Vert _{0,I}\right\}.  
\label{zepsilon-H1}
\end{eqnarray}
\end{lem}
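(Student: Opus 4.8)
The plan is to exploit the direct sum decomposition ${S}(\lambda,p) = S_1 \oplus S_\varepsilon$ together with the orthogonality condition (\ref{Pi0}) and the strengthened Cauchy--Schwarz inequality of Lemma~\ref{SCS}. Writing ${\mathcal P}_0 z = z_1 + z_\varepsilon$, I would first test (\ref{Pi0}) against $v = z_1 \in S_1 \subset S_0(\lambda,p)$ and against $v = z_\varepsilon \in S_\varepsilon \subset S_0(\lambda,p)$ separately. Testing with $z_1$ gives ${\mathcal B}_0(z_1,z_1) = {\mathcal B}_0(z,z_1) - {\mathcal B}_0(z_\varepsilon,z_1)$; testing with $z_\varepsilon$ gives ${\mathcal B}_0(z_\varepsilon,z_\varepsilon) = {\mathcal B}_0(z,z_\varepsilon) - {\mathcal B}_0(z_1,z_\varepsilon)$. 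Since ${\mathcal B}_0(u,u) = \langle bu,u\rangle_I \ge c_b \|u\|_{0,I}^2$ and $|{\mathcal B}_0(z,w)| \le \|b\|_{\infty,I}\|z\|_{0,\omega}\|w\|_{0,\omega}$ on any common support $\omega$, these become two coupled inequalities; the crucial point is that the cross term ${\mathcal B}_0(z_1,z_\varepsilon)$ is estimated not by the crude Cauchy--Schwarz bound but by Lemma~\ref{SCS}, which supplies the small factor $\theta := C\min\{1,\sqrt{\lambda p\varepsilon}\,p\}$ (under (\ref{eq:discrete-scale-resolution-scalar}) this is $\le c$, which we will take $<1$).

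Concretely I expect: from the $z_1$-equation, $c_b \|z_1\|_{0,I}^2 \le \|b\|_{\infty,I}\|z\|_{0,I}\|z_1\|_{0,I} + \theta\|z_\varepsilon\|_{0,I}\|z_1\|_{0,I}$, hence $\|z_1\|_{0,I} \lesssim \|z\|_{0,I} + \theta\|z_\varepsilon\|_{0,I}$; from the $z_\varepsilon$-equation, using that $z_\varepsilon$ and $z$ are paired only over $I_\varepsilon = \operatorname{supp} z_\varepsilon$, $c_b\|z_\varepsilon\|_{0,I}^2 \le \|b\|_{\infty,I}\|z\|_{0,I_\varepsilon}\|z_\varepsilon\|_{0,I} + \theta\|z_1\|_{0,I}\|z_\varepsilon\|_{0,I}$, hence $\|z_\varepsilon\|_{0,I} \lesssim \|z\|_{0,I_\varepsilon} + \theta\|z_1\|_{0,I}$. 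Substituting the first into the second and choosing the constant $c$ in (\ref{eq:discrete-scale-resolution-scalar}) small enough that $\theta^2 \le 1/2$ (so the $\theta^2\|z_\varepsilon\|_{0,I}$ term can be absorbed on the left), one obtains $\|z_\varepsilon\|_{0,I} \lesssim \|z\|_{0,I_\varepsilon} + \theta\|z\|_{0,I}$, which is (\ref{zepsilon}) since $\theta \le C\sqrt{\lambda p\varepsilon}\,p$; plugging back gives (\ref{z1}). The two $H^1$ bounds (\ref{z1-H1}), (\ref{zepsilon-H1}) then follow immediately by applying the polynomial inverse estimates (\ref{inv_est1}) and (\ref{inv_est2}) to $z_1 \in S_1$ and $z_\varepsilon \in S_\varepsilon$ respectively, using the $L^2$ bounds just proved (noting $p^2/(\lambda p\varepsilon)\cdot\sqrt{\lambda p\varepsilon}\,p = (\lambda p\varepsilon)^{-1/2}p^3$ to get the stated form).

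A subtle point to check carefully is that $z_1$ and $z_\varepsilon$ are genuinely admissible test functions in (\ref{Pi0}), i.e.\ that $z_1, z_\varepsilon \in S_0(\lambda,p)$: functions in $S_\varepsilon$ vanish at $0$ and $1$ by the support condition (\ref{Sepsilon}), and for $z_1 \in S_1$ we use that ${\mathcal P}_0 z \in S_0(\lambda,p)$ vanishes at the endpoints while $z_\varepsilon$ also vanishes there, so $z_1 = {\mathcal P}_0 z - z_\varepsilon \in S_0(\lambda,p)$ as well --- hence both components individually lie in the test space, which is what licenses testing against each separately. The main obstacle, such as it is, is purely bookkeeping: getting the absorption argument and the choice of the threshold constant $c$ in (\ref{eq:discrete-scale-resolution-scalar}) consistent, and making sure the constants depend only on $c_b = \inf_I b$ and $\|b\|_{\infty,I}$ (and the absolute constant from Lemma~\ref{SCS}), as claimed; there is no deep difficulty once the strengthened Cauchy--Schwarz inequality is in hand.
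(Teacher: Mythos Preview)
Your argument contains a genuine gap at exactly the point you flagged as ``subtle'': the claim that $z_\varepsilon$ (and hence $z_1$) vanishes at the endpoints $0$ and $1$ is \emph{false}. The support condition in (\ref{Sepsilon}) requires only $\operatorname{supp} z_\varepsilon \subset I_\varepsilon = [0,\lambda p\varepsilon]\cup[1-\lambda p\varepsilon,1]$, and both endpoints $0$ and $1$ lie \emph{inside} $I_\varepsilon$, so nothing forces $z_\varepsilon(0)=z_\varepsilon(1)=0$. Indeed, the paper states explicitly right after (\ref{Sepsilon}) that ``the spaces $S_1$ and $S_\varepsilon$ do not carry any boundary conditions at the endpoints of $I$''. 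All you know is that the \emph{sum} satisfies $z_1(0)+z_\varepsilon(0)=0$ and $z_1(1)+z_\varepsilon(1)=0$; the individual components need not vanish. Consequently neither $z_1$ nor $z_\varepsilon$ is an admissible test function in (\ref{Pi0}), and you cannot replace ${\mathcal B}_0({\mathcal P}_0 z,\,\cdot\,)$ by ${\mathcal B}_0(z,\,\cdot\,)$ as you do in both of your coupled inequalities.

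This is precisely the difficulty the paper's proof addresses. For the $z_1$-equation the paper sidesteps the issue by not invoking the projection property at all: it keeps the right-hand side as ${\mathcal B}_0({\mathcal P}_0 z,z_1)$ and later uses the trivial $L^2$-stability $\|{\mathcal P}_0 z\|_{0,I}\le C\|z\|_{0,I}$. For the $z_\varepsilon$-equation, where the projection property \emph{is} needed to get $\|z\|_{0,I_\varepsilon}$ on the right, the paper constructs a corrected test function $\widetilde z_\varepsilon = z_\varepsilon + \psi_{1,\varepsilon}\,z_1(0) + \psi_{2,\varepsilon}\,z_1(1) \in S_\varepsilon \cap S_0(\lambda,p)$, where $\psi_{j,\varepsilon}$ are explicit polynomial ``bumps'' of $L^2$-norm $\sim p^{-1/2}\sqrt{\lambda p\varepsilon}$ supported in the small elements; combined with the inverse estimate $|z_1(0)|,|z_1(1)|\le Cp\|z_1\|_{0,I}$, this correction is small enough ($\|\widetilde z_\varepsilon - z_\varepsilon\|_{0,I}\lesssim p^{1/2}\sqrt{\lambda p\varepsilon}\,\|z_1\|_{0,I}$) that the rest of your absorption argument goes through essentially as you wrote it. Once you insert this modification, your derivation of (\ref{z1}), (\ref{zepsilon}) and the subsequent use of the inverse estimates (\ref{inv_est1}), (\ref{inv_est2}) for (\ref{z1-H1}), (\ref{zepsilon-H1}) are fine.
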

\begin{proof} 
Before we start with the proof of (\ref{z1}), (\ref{zepsilon}), we mention 
that (\ref{z1}) follows by fairly standard arguments. Indeed, 
the smallness assumption (\ref{eq:discrete-scale-resolution-scalar}) on $c$ implies the strengthened Cauchy-Schwarz
inequality by Lemma~\ref{SCS}, and for this setting, it is well-known that the contributions $z_1$ and $z_\varepsilon$
can be controlled in terms of the constant of the strengthened Cauchy-Schwarz inequality and 
$\|{\mathcal P}_0 z\|_{0,I}$. 
\iftechreport (See Appendix for details.)
\fi
This result produces (\ref{z1}) but not (\ref{zepsilon}), for which we need
to refine the standard analysis. This is done below. In the interest of completeness, we will nevertheless 
present a proof for both (\ref{z1}), (\ref{zepsilon}). 

Write ${\mathcal P}_0 z = z_1 + z_\varepsilon$ with $z_1 \in S_1$ and $z_\varepsilon \in S_\varepsilon$. 
We define the auxiliary function
\begin{equation*}
\psi _{1,\varepsilon }:=\begin{cases}
\left( 1-\frac{x}{\lambda p\varepsilon }\right) ^{p} & \mbox{ if $x\in \lbrack
0,\lambda p\varepsilon ]$} \\ 
0 & \mbox{otherwise.}
\end{cases}
\end{equation*}
Then 
$\text{supp }\psi_{1,\varepsilon }\subset \lbrack 0,\lambda p\varepsilon ],\psi_{1,\varepsilon }(0)=1$ 
and $\left\Vert \psi_{1,\varepsilon }\right\Vert _{0,I_{\varepsilon }} \sim p^{-1/2}\sqrt{\lambda 
p\varepsilon }$. For the right endpoint we define 
$\psi_{2,\varepsilon }(x):=\psi _{1,\varepsilon }(1-x), x \in [1-\lambda p \varepsilon, 1]$.
We also define
\begin{equation*}
\widetilde{z}_{\varepsilon }:=
z_{\varepsilon }+\psi _{1,\varepsilon}z_{1}(0)+\psi _{2,\varepsilon }z_{1}(1),
\end{equation*}
and note that ${\mathcal{P}}_0 z \in S_0(\lambda ,p)$. Thus, 
$(z_1 + z_\varepsilon)|_{\partial I}= 0$ so that 
$\widetilde{z}_{\varepsilon }\in S_{\varepsilon}\cap H_{0}^{1}(I)\subset S_{\varepsilon }\cap S_0(\lambda ,p)$. 
Utilizing the inverse estimate \cite[Thm.~{3.92}]{schwab98} 
\begin{equation*}
\left\Vert \pi \right\Vert _{\infty, I} \leq C p \left\Vert \pi
\right\Vert _{0,I} \; \forall \; \pi \in S_{1},
\end{equation*}
we arrive at 
\begin{equation*}
\left\Vert \widetilde{z}_{\varepsilon }\right\Vert _{0,I}=\left\Vert 
\widetilde{z}_{\varepsilon }\right\Vert _{0,I_{\varepsilon }}\leq C\left\{
\left\Vert z_{\varepsilon }\right\Vert _{0,I_{\varepsilon }}+p^{1/2}\sqrt{
\lambda p\varepsilon }\left\Vert z_{1}\right\Vert _{0,I}\right\} .
\end{equation*}

The representation ${\mathcal{P}}_{0}z=z_{1}+z_{\varepsilon }\in
S_0(\lambda ,p)$ also implies 
\begin{eqnarray}
{\mathcal{B}}_{0}(z_{1},v_{1})+{\mathcal{B}}_{0}(z_{\varepsilon },v_{1}) &=&
{\mathcal{B}}_{0}({\mathcal{P}} _{0}z,v_{1}) \quad \forall \text{ }v_{1}\in S_{1}, 
\label{1} \\
{\mathcal{B}}_{0}(z_{1},v_{\varepsilon })+{\mathcal{B}}_{0}(z_{\varepsilon
},v_{\varepsilon }) &=&{\mathcal{B}}_{0}({\mathcal{P}} _{0}z,v_{\varepsilon })=
{\mathcal{B}}_{0}(z,v_{\varepsilon })\quad \forall \text{ }v_{\varepsilon
}\in S_{\varepsilon }\cap S_0(\lambda ,p),  \label{2}
\end{eqnarray}
where in (\ref{2}) we used the fact that ${\mathcal{P}} _{0}$ is the ${\mathcal B}_0$--projection 
onto $S_0(\lambda ,p)$. Taking $v_{1}=z_{1}$ in (\ref{1}) and 
$v_{\varepsilon }=\widetilde{z}_{\varepsilon }\in S_{\varepsilon }\cap S_0(\lambda ,p)$ 
in (\ref{2}) yields, together with the Strengthened Cauchy Schwarz inequality of Lemma~\ref{SCS},
\begin{subequations}
\begin{eqnarray}
\Vert z_{1}\Vert _{0,I}^{2} &\leq& C \{ \Vert {\mathcal{P}}_{0}z\Vert _{0,I}\Vert
z_{1}\Vert _{0,I}+p\sqrt{\lambda p\varepsilon }\Vert z_{\varepsilon }\Vert
_{0,I}\Vert z_{1}\Vert _{0,I} \}, 
\label{lemma:stable-discrete-decomposition-scalar-500} \\
\Vert z_{\varepsilon }\Vert _{0,I}^{2} &\leq& C \{ \Vert z\Vert
_{0,I_{\varepsilon }}\Vert \widetilde{z}_{\varepsilon }\Vert
_{0,I_{\varepsilon }}+p\sqrt{\lambda p\varepsilon }\Vert \widetilde{z}
_{\varepsilon }\Vert _{0,I}\Vert z_{1}\Vert _{0,I}+\Vert z_{\varepsilon
}\Vert _{0,I}\Vert z_{1}\Vert _{0,I}\sqrt{\lambda p\varepsilon }p^{1/2} \}
\notag \\
&\leq& C \{ \Vert z_{\varepsilon }\Vert _{0,I}\left[ \Vert z\Vert
_{0,I_{\varepsilon }}+p\sqrt{\lambda p\varepsilon }\Vert z_{1}\Vert _{0,I}+
\sqrt{\lambda p \varepsilon}p^{1/2}\Vert z_{1}\Vert _{0,I}\right]  \notag \\
&&\qquad \mbox{}+\left[ \Vert z\Vert _{0,I_{\varepsilon }}+p\sqrt{\lambda 
p\varepsilon }\Vert z_{1}\Vert _{0,I}\right] \sqrt{\lambda p\varepsilon }
p^{1/2}\Vert z_{1}\Vert _{0,I} \}.
\label{lemma:stable-discrete-decomposition-scalar-500-b}
\end{eqnarray}
\end{subequations}
Estimating generously $\sqrt{\lambda p\varepsilon }p^{1/2}\leq \sqrt{\lambda p\varepsilon }p$ 
and using an appropriate Young inequality in 
(\ref{lemma:stable-discrete-decomposition-scalar-500-b}) we get 
\begin{subequations}
\begin{eqnarray}
\Vert z_{1}\Vert _{0,I} &\leq& C \{ \Vert {\mathcal{P}}_{0}z\Vert _{0,I}+p\sqrt{\lambda 
p\varepsilon }\Vert z_{\varepsilon }\Vert _{0,I}\},
\label{lemma:stable-discrete-decomposition-scalar-600} \\
\Vert z_{\varepsilon }\Vert _{0,I} &\leq& C \{ \Vert z\Vert
_{0,I_{\varepsilon }}+p\sqrt{\lambda p\varepsilon }\Vert z_{1}\Vert _{0,I}
\} .  \label{lemma:stable-discrete-decomposition-scalar-600-b}
\end{eqnarray}
\end{subequations}
Inserting (\ref{lemma:stable-discrete-decomposition-scalar-600-b}) in 
(\ref{lemma:stable-discrete-decomposition-scalar-600}), assuming that 
$\sqrt{\lambda p\varepsilon }p$ is sufficiently small and using the stability 
$\Vert {\mathcal{P}}_{0}z\Vert _{0,I}\leq C \Vert z\Vert _{0,I}$ gives 
$\Vert z_{1}\Vert_{0,I}\leq C\Vert z\Vert _{0,I}$. Inserting this bound in 
(\ref{lemma:stable-discrete-decomposition-scalar-600-b}) concludes the proof
of (\ref{z1}) and (\ref{zepsilon}). Finally, the proof (\ref{z1-H1}), (\ref{zepsilon-H1})
follows from a further application of the standard polynomial inverse estimates 
(\ref{inv_est1}), (\ref{inv_est2}). 
\end{proof}
\subsubsection{Conclusion of the proof of Theorem~\ref{thm:balanced-norm-1D}}
\label{sec:conclusion-of-proof-1D}
We are now in the position to prove the following
\begin{lem}
\label{lemma:estimate-Pi0u-scalar}
Assume (\ref{analytic_data}). Let $u$ be the solution of (\ref{BuvFv}) and 
let $\lambda_0$ be given by Lemma~\ref{lemma:1D-approximation}. 
Let $\lambda \in (0,\lambda_0]$ and assume that $\lambda$, $p$, $\varepsilon$ 
satisfy (\ref{eq:discrete-scale-resolution-scalar}).  
Then there exist constants $C$, $\beta > 0$ (independent of $\varepsilon\in(0,1]$ 
and $p\in\N$ but dependent on $\lambda$) such that  
\begin{equation}
\label{eq:key-estimate-scalar}
\|(u - {\mathcal{P}}_0 u)^\prime \|_{0,I} \leq C \varepsilon^{-1/2} e^{-\beta p}.
\end{equation}
\end{lem}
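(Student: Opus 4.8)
The plan is to estimate $\|(u - {\mathcal P}_0 u)^\prime\|_{0,I}$ by inserting the approximant $I_p u$ from Lemma~\ref{lemma:1D-approximation} as an intermediary. Writing $u - {\mathcal P}_0 u = (u - I_p u) + (I_p u - {\mathcal P}_0 u)$ and using the triangle inequality, the first term is controlled directly: since $\lambda p \varepsilon < 1/4$ we have $\sqrt{\lambda p \varepsilon} \|(u - I_p u)^\prime\|_{0,I} \leq C e^{-\beta \lambda p}$, hence $\|(u - I_p u)^\prime\|_{0,I} \leq C (\lambda p \varepsilon)^{-1/2} e^{-\beta \lambda p} \leq C \varepsilon^{-1/2} e^{-\beta^\prime p}$ after absorbing the algebraic factor $p^{-1/2}$ and part of the exponential. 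For the second term, I would note that $q := I_p u - {\mathcal P}_0 u \in S_0(\lambda,p) \subset S(\lambda,p)$, so it decomposes as $q = q_1 + q_\varepsilon$ with $q_1 \in S_1$, $q_\varepsilon \in S_\varepsilon$, and I can invoke Lemma~\ref{lemma:almost-orthogonal} provided I first rewrite $q$ as ${\mathcal P}_0 z$ for a suitable $z$.

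The key observation is that ${\mathcal P}_0$ is a ${\mathcal B}_0$-projection onto $S_0(\lambda,p)$, and $I_p u \in S_0(\lambda,p)$, so ${\mathcal P}_0(I_p u) = I_p u$; therefore $q = I_p u - {\mathcal P}_0 u = {\mathcal P}_0(I_p u - u)$. Thus I apply Lemma~\ref{lemma:almost-orthogonal} with $z := I_p u - u$. This gives
\begin{align*}
\|q_1^\prime\|_{0,I} &\leq C p^2 \|z\|_{0,I}, \\
\|q_\varepsilon^\prime\|_{0,I} &\leq C\left\{ \frac{p^2}{\lambda p \varepsilon} \|z\|_{0,I_\varepsilon} + (\lambda p \varepsilon)^{-1/2} p^3 \|z\|_{0,I}\right\}.
\end{align*}
Now $\|z\|_{0,I} = \|u - I_p u\|_{0,I} \leq C e^{-\beta \lambda p}$ from Lemma~\ref{lemma:1D-approximation}, and on the layer region $\|z\|_{0,I_\varepsilon} \leq |I_\varepsilon|^{1/2} \|u - I_p u\|_{\infty,I} \leq C (\lambda p \varepsilon)^{1/2} e^{-\beta \lambda p}$ using the $L^\infty$ bound. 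Substituting, the $q_\varepsilon^\prime$ bound becomes $C\{ (\lambda p \varepsilon)^{-1/2} p^2 e^{-\beta \lambda p} + (\lambda p \varepsilon)^{-1/2} p^3 e^{-\beta \lambda p}\} \leq C (\lambda p \varepsilon)^{-1/2} p^3 e^{-\beta \lambda p}$, while the $q_1^\prime$ bound $C p^2 e^{-\beta \lambda p}$ is dominated by this. Then $\|q^\prime\|_{0,I} \leq \|q_1^\prime\|_{0,I} + \|q_\varepsilon^\prime\|_{0,I} \leq C \varepsilon^{-1/2} p^3 (\lambda p)^{-1/2} e^{-\beta \lambda p}$, and the algebraic factor $p^{5/2}$ is absorbed into the exponential at the cost of reducing $\beta \lambda$ to some smaller $\beta^\prime$, yielding $\|q^\prime\|_{0,I} \leq C \varepsilon^{-1/2} e^{-\beta^\prime p}$. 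Combining with the first term gives (\ref{eq:key-estimate-scalar}).

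The main obstacle, and the step requiring the most care, is verifying that the hypotheses of Lemma~\ref{lemma:almost-orthogonal} are in force — in particular that the smallness condition (\ref{eq:discrete-scale-resolution-scalar}), $\sqrt{\lambda p \varepsilon}\, p \leq c$, holds; this is exactly why it is assumed in the statement of the present lemma, but one should check it is consistent with $\lambda p \varepsilon < 1/4$ and can be arranged by taking $\lambda_0$ small (noting that in the regime where it fails we are effectively in the asymptotic case $\varepsilon^{-1} \lesssim p^3$, handled separately). A secondary point is bookkeeping with the constants: the various algebraic prefactors $p^2$, $p^3$, $(\lambda p)^{\pm 1/2}$ must all be swept into the exponential decay $e^{-\beta \lambda p}$, which is legitimate since for any fixed $\lambda > 0$ and any polynomial $q(p)$ one has $q(p) e^{-\beta \lambda p} \leq C_{\lambda} e^{-(\beta \lambda/2) p}$; the resulting rate $\sigma$ depends on $\lambda$, consistent with the statement of Theorem~\ref{thm:balanced-norm-1D}. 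One should also be mindful that all $\|\cdot\|_{0,I_\varepsilon}$ estimates exploit that $I_\varepsilon$ has measure $2\lambda p \varepsilon$, so the $L^\infty$-to-$L^2$ passage there genuinely gains the crucial factor $\sqrt{\varepsilon}$ that makes the final bound scale like $\varepsilon^{-1/2}$ rather than worse.
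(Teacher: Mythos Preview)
Your proof is correct and follows essentially the same approach as the paper: insert the approximant $I_p u$, use the projection property to rewrite the discrete part as ${\mathcal P}_0(I_p u - u)$, decompose via Lemma~\ref{lemma:almost-orthogonal}, and absorb algebraic factors into the exponential. The only minor difference is that you bound $\|u - I_p u\|_{0,I_\varepsilon}$ via the $L^\infty$-estimate of Lemma~\ref{lemma:1D-approximation} and the smallness of $|I_\varepsilon|$, whereas the paper uses a Poincar\'e-type argument on $I_\varepsilon$ (exploiting $(u - I_p u)(0) = (u - I_p u)(1) = 0$); both routes produce the needed $\sqrt{\lambda p\varepsilon}$ factor.
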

\begin{proof} 
Recall that only the case $\lambda p \varepsilon < 1/4$ is of interest. 
By Lemma~\ref{lemma:1D-approximation} we can find 
an approximation $I_{p}u\in S_0(\lambda ,p)$ with 
\begin{equation}
\Vert u-I_{p}u\Vert _{0,I}+\sqrt{\varepsilon }\Vert (u-I_{p}u)^{\prime
}\Vert _{0,I}\leq Ce^{-\beta p}.  \label{eq:lemma:estimate-Pi0u-scalar-10}
\end{equation}
We stress that, while the estimate (\ref{eq:lemma:1D-approximation-10}) is explicit 
in the parameter $\lambda$, we have absorbed this dependence here in the constants $C$ and $\beta$  
for simplicity of exposition. 

Since ${\mathcal{P}}_0$ is a projection on $S_0(\lambda ,p)$ and $I_p u \in S_0(\lambda,p)$, 
we can write $u- {\mathcal{P}}_0 u=u-I_{p}u-{\mathcal{P}}_0 (u-I_{p}u)$. 
The first term, $u - I_p u$, is already treated in (\ref{eq:lemma:estimate-Pi0u-scalar-10}). 
For the second term, ${\mathcal{P}}_0 (u-I_{p}u)\in S_0(\lambda ,p)$, we decompose 
${\mathcal{P}}_0 (u-I_{p}u)=z_{1}+z_{\varepsilon }$ and use the estimates 
(\ref{z1-H1}), (\ref{zepsilon-H1}) of Lemma~\ref{lemma:almost-orthogonal} to get 
\begin{eqnarray*}
\Vert z_{1}^{\prime }\Vert _{0,I} &\lesssim &
p^{2}\Vert u-I_{p}u\Vert _{0,I} \leq Ce^{-\beta p}, \\
\Vert z_{\varepsilon }^{\prime }\Vert _{0,I} &\lesssim &
\frac{p^{2}}{\lambda p\varepsilon }\left[ \Vert u-I_{p}u\Vert
_{0,I_{\varepsilon }}+\sqrt{\lambda p\varepsilon }p\Vert u-I_{p}u \Vert
_{0,I}\right] .
\end{eqnarray*}
There are several possible ways to treat the term 
$\Vert (u-I_{p}u)\Vert_{0,I_{\varepsilon }}$. A rather generous approach exploits the fact that 
$(u-I_{p}u)(0)=(u-I_{p}u)(1)=0$ so that we use $z(x)=\int_{0}^{x}z^{\prime}(t)\,dt$ 
and obtain 
\begin{equation*}
\Vert u-I_{p}u\Vert _{0,I_{\varepsilon }}\leq C\lambda p\varepsilon \Vert
(u-I_{p})^{\prime }\Vert _{0,I_{\varepsilon }}.
\end{equation*}
Hence, 
\begin{equation*}
\Vert z_{\varepsilon }^{\prime }\Vert _{0,I}\lesssim 
\frac{p^{2}}{\lambda p \varepsilon}\left[\lambda p \varepsilon \Vert
(u-I_{p}u)^{\prime }\Vert _{0,I_{\varepsilon }}+\sqrt{\lambda p\varepsilon }
p\Vert u-I_{p}u\Vert _{0,I}\right] \lesssim \varepsilon ^{-1/2}e^{-\beta p}.
\end{equation*}
\end{proof}

\begin{numberedproof}{Theorem~\ref{thm:balanced-norm-1D}} 
In view of $\|u - u_{FEM}\|_{0,I} \leq C \|u - u_{FEM}\|_{E,I} \leq C e^{-\sigma p}$
by Proposition~\ref{prop:melenk97}, 
we focus on the control
of $\sqrt{\varepsilon} \|(u - u_{FEM})^\prime\|_{0,I}$. 
We distinguish two cases: 

{\em Case 1:} Assume that (\ref{eq:discrete-scale-resolution-scalar}) is satisfied. Then 
(\ref{eq:key-estimate-scalar}) yields the result. 

{\em Case 2:} Assume that $\sqrt{\lambda p \varepsilon} p \ge c$ for the 
constant $c$ appearing in (\ref{eq:discrete-scale-resolution-scalar}). 
Then $\varepsilon^{-1/2} \leq c^{-1} p^{3/2} \lambda ^{1/2}$ so that 
\begin{eqnarray*}
\sqrt{\varepsilon} \|(u - u_{FEM})^\prime\|_{0,I} &\leq& 
\varepsilon^{-1/2} \|u - u_{FEM}\|_{E,I} 
\leq c^{-1} \lambda^{1/2} p^{3/2} \|u - u_{FEM}\|_{E,I} 
\lesssim e^{-\sigma p},
\end{eqnarray*}
which concludes the proof.
\end{numberedproof}
\subsection{Numerical example}
To illustrate the theoretical findings presented above, we show in Figure
\ref{fig:1D-example}
the results of numerical computations for the following problem: 
\begin{eqnarray*}
-\varepsilon^2  u^{\prime \prime }(x)+u(x) &=&\left( x+\frac{1}{2}\right)
^{-1},\quad x\in (0,1), \\
u(0) &=&u(1)=0.
\end{eqnarray*}
We use the \emph{Spectral Boundary Layer mesh} $\Delta_{BL}(\lambda,p)$  with $\lambda = 1$ 
and polynomials of 
degree $p$ which we increase from $1$ to $5$ to improve accuracy.  
We select $\varepsilon = 10^{-j}$, $j=4,...,8$. We 
note $\operatorname*{dim} S_0(\lambda ,p) = 2+3(p-1)$.  
Since no exact solution is available, we use a reference solution to estimate the error. In 
Fig.~\ref{fig:1D-example}, we present the error in the 
balanced norm (\ref{eq:balanced-norm-1D}) versus the polynomial degree $p$ as well as 
the error $\varepsilon^{1/2} \|(u - u_{FEM})^\prime\|_{0,I}$ and the $L^2$-error. The error
curves are on top of each other, which supports the robust exponential convergence 
in the balanced norm. 

\begin{figure}[ht]
\par
\begin{center}
\includegraphics[width=0.5\textwidth]{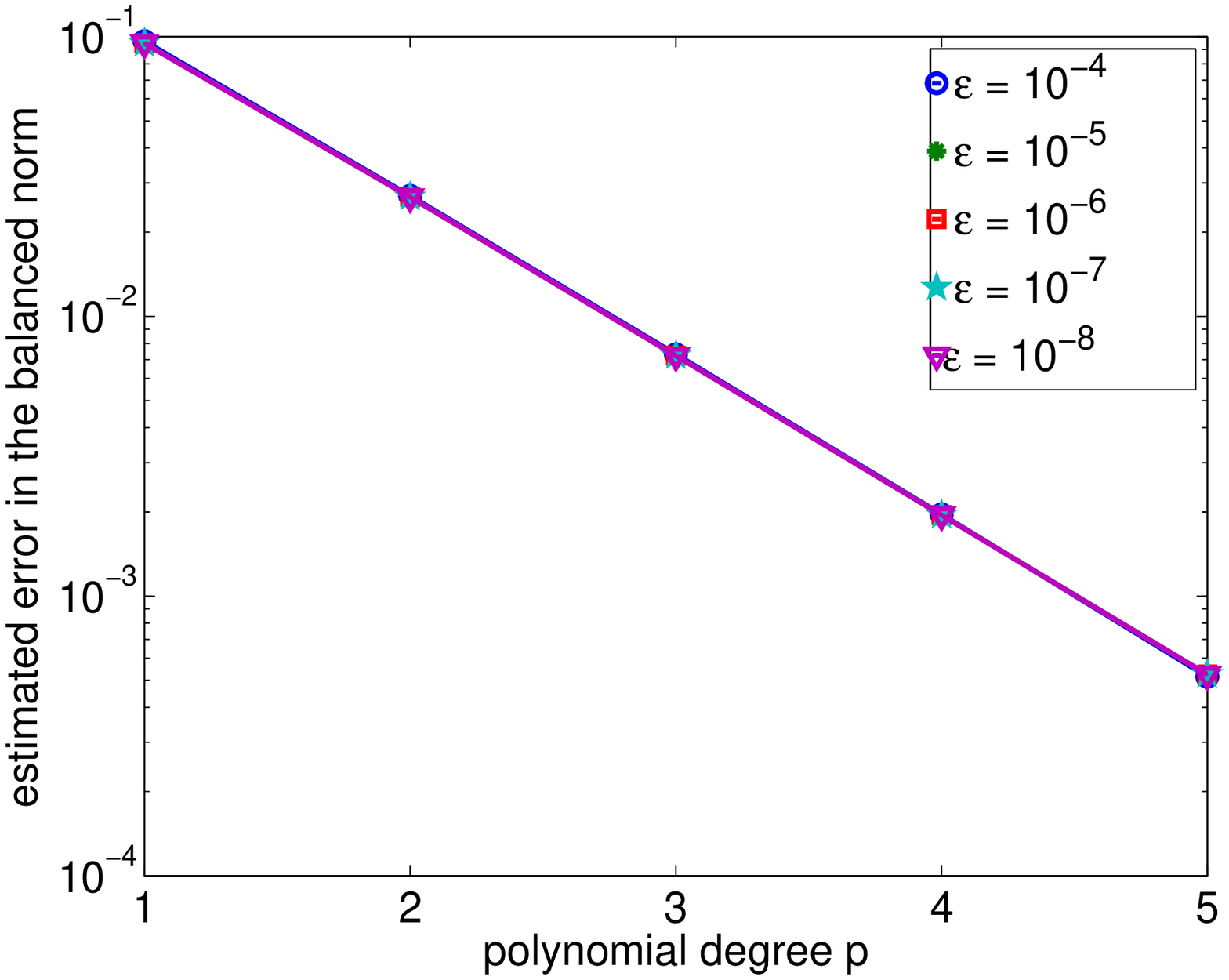}
\par
\includegraphics[width=0.45\textwidth]{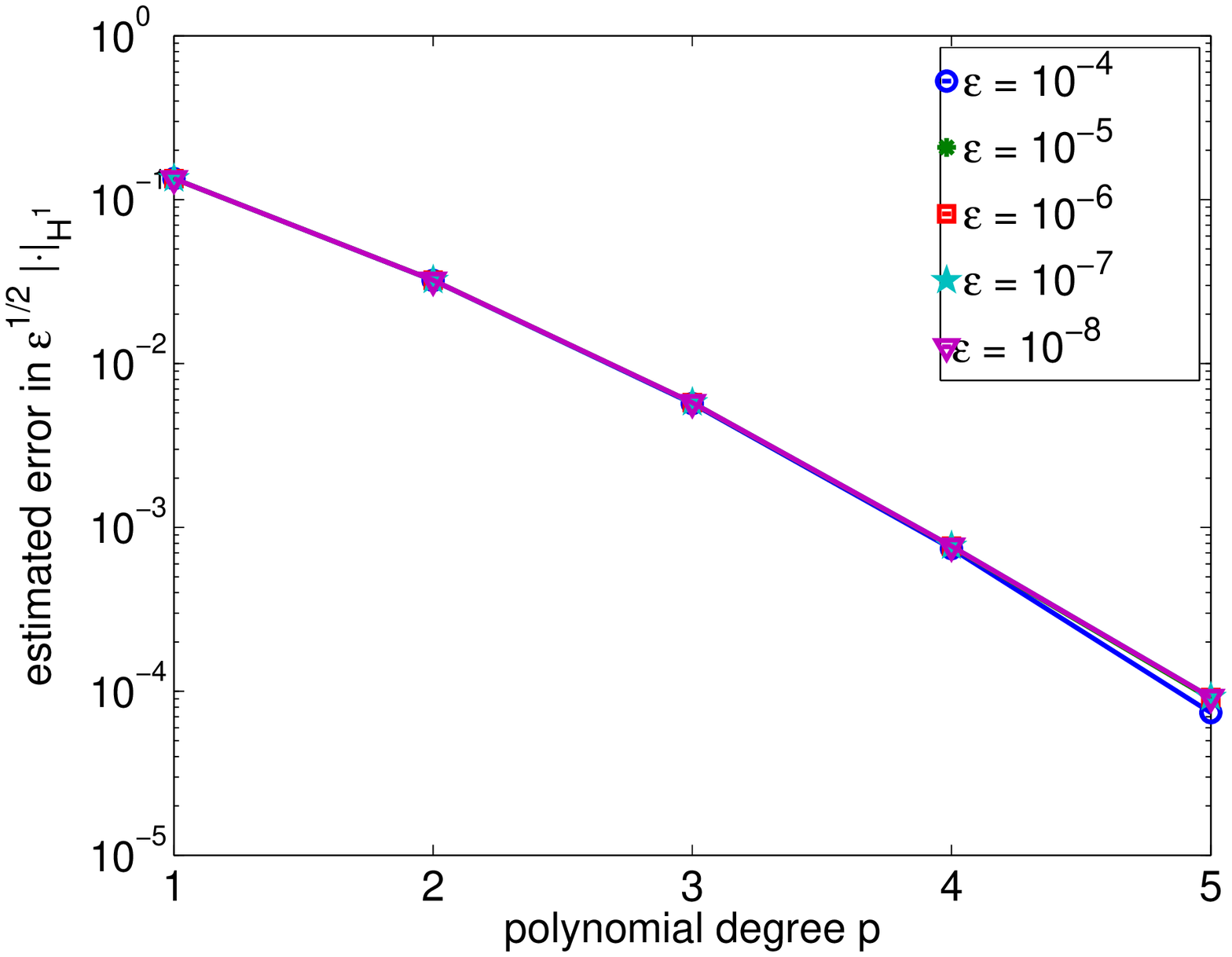}
\includegraphics[width=0.45\textwidth]{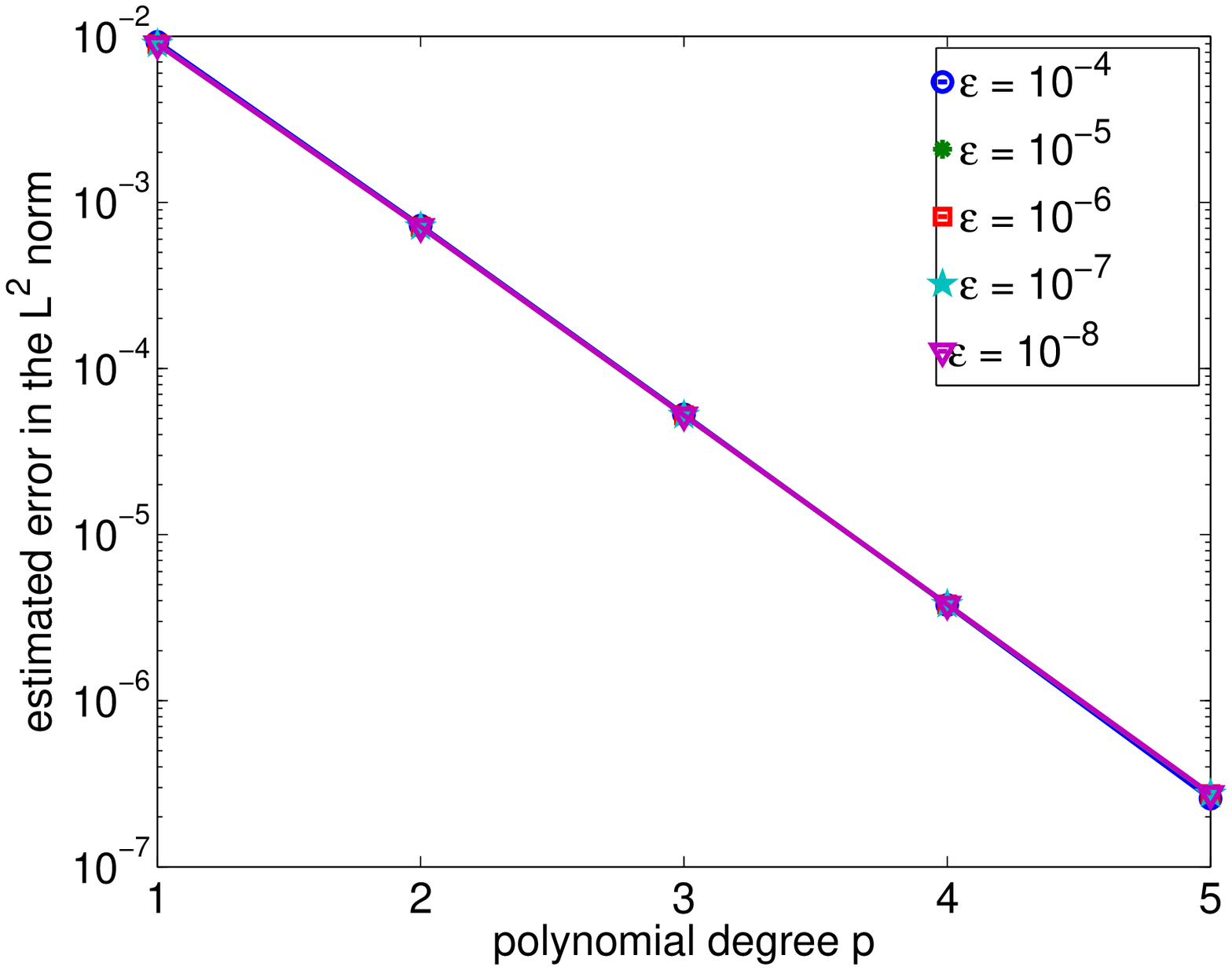}
\end{center}
\caption{\label{fig:1D-example} Convergence on \emph{Spectral Boundary Layer meshes.} 
Top: convergence in the balanced norm. Bottom left: error $\varepsilon^{1/2} \|(u - u_{FEM})^\prime\|_{0,I}$ versus $p$. Bottom right: convergence in $L^2$.}
\end{figure}

\section{The two-dimensional case}
\label{2D} 
The ideas of the previous section carry over to the two-dimensional
case. We consider the following boundary value problem: Find $u$ such that 
\begin{subequations}
\label{eq:2D-problem}
\begin{eqnarray}
-\varepsilon ^{2}\Delta u+b u &=&f \quad \text{ in }\Omega \subset 
\mathbb{R}^{2},  \label{bvp2D} \\
u &=&0\quad \text{ on }\partial \Omega ,  \label{BC2D}
\end{eqnarray}
\end{subequations}
where $\varepsilon \in (0,1]$, and the functions $b$, $f$ are given with $b>0$ on $\overline{\Omega}$. We assume that
the data of the problem is analytic, i.e., $\partial \Omega $ is an analytic
curve and that there exist constants $C_{f}$, $\gamma _{f}$, $C_{b}$, $\gamma _{b}$, $c_b >0$ such that 
\begin{equation}
\left\{ 
\begin{array}{c}
\left\Vert \nabla ^{n}f\right\Vert _{\infty ,\Omega }\leq C_{f}\gamma
_{f}^{n}n!\quad \forall \;n\in \mathbb{N}_{0}, \\ 
\left\Vert \nabla ^{n}b\right\Vert _{\infty ,\Omega }\leq C_{b}\gamma
_{b}^{n}n!\quad \forall \;n\in \mathbb{N}_{0}, \\
\inf_{x \in \Omega} b(x) \ge c_b > 0 .
\end{array}
\right.   \label{analytic_data_2D}
\end{equation}
The variational formulation of (\ref{bvp2D}), (\ref{BC2D}) reads: Find 
$u\in H_{0}^{1}\left( \Omega \right)$ such that 
\begin{equation}
{\mathcal{B}}_{\varepsilon }(u,v):=\varepsilon^2 \left \langle \nabla u,\nabla v\right\rangle_\Omega + 
\left\langle b u,v\right\rangle_{\Omega} =F(v):= 
\left\langle f,v\right\rangle_{\Omega}  \quad \forall  v\in H_{0}^{1}\left(\Omega \right),  
\label{BuvFv2D}
\end{equation}
where $\left\langle {\cdot ,\cdot }\right\rangle_{\Omega} $ denotes the usual $L^{2}(\Omega )$ 
inner product. As in 1D, the energy norm $\|\cdot\|_{E,\Omega}$ and the balanced norm 
$\|\cdot\|_{balanced,\Omega}$ are defined by 
\begin{eqnarray}
\label{eq:energy-norm-2D}
\|v\|^2_{E,\Omega} &:=& {\mathcal B}_\varepsilon(v,v), \\  
\label{eq:balanced-norm-2D}
\|v\|^2_{balanced,\Omega} &:=& \|v\|^2_{0,\Omega} + \varepsilon \|\nabla v\|^2_{0,\Omega}
\end{eqnarray}
The discrete version of (\ref{BuvFv2D}) reads: find 
$u_{FEM}\in V_{N}\subset H_{0}^{1}\left( \Omega \right)$ such that (\ref{BuvFv2D}) 
holds for all $v\in V_{N}\subset H_{0}^{1}\left( \Omega \right)$, with $u$ replaced by 
$u_{FEM}$, where the subspace $V_{N}$ will be defined shortly. 
\subsection{Meshes and spaces}
\label{sec:2D-meshes}
Concerning the meshes and the $hp$-FEM space based on these meshes, we adopt the simplest 
case that generalizes our 1D analysis to 2D: The elements are (curvilinear) quadrilaterals 
and the needle elements required to resolve the boundary layer are obtained as mappings 
of needle elements of a reference configuration. 
This approach is discussed in more detail in \cite[Sec.~{3.1.2}]{MelenkSchwab} and expanded 
as the notion of ``patchwise structured meshes'' in \cite[Sec.~{3.3.2}]{mB}. 

Our $hp$-FEM spaces have the following general structure: 
Let $\Delta =\left\{ \Omega _{i}\right\} _{i=1}^{N}$ be a mesh
consisting of curvilinear quadrilaterals $\Omega _{i}$, $i=1,\ldots,N$, subject to the usual
restrictions (see, e.g., \cite{MelenkSchwab}) and associate with each $\Omega_{i}$ 
a bijective, Lipschitz continuous (further smoothness assumptions are imposed below) 
element mapping $M_{i}:\Sref\rightarrow \overline{\Omega }_{i},$ 
where $\Sref=[0,1]^{2}$ denotes the usual reference square. With $Q_{p}(\Sref)$ the space of 
polynomials of degree $p$ (in each variable) on $\Sref$, we set 
\begin{eqnarray*}
\mathcal{S}^{p}(\Delta ) &=&\left\{ u\in H^{1}\left( \Omega \right)
:\left. u\right\vert _{\Omega _{i}}\circ M_{i} \in Q_p(\Sref),\quad i=1,...,N\right\},\\
\mathcal{S}_{0}^{p}(\Delta ) &=&\mathcal{S}^{p}(\Delta )\cap
H_{0}^{1}(\Omega ).
\end{eqnarray*}

We now describe the mesh $\Delta$ and the element maps that we will use (see Fig.~\ref{fig:2D-meshes}). 
Our starting point is a {\em fixed} mesh $\Delta_A$ (the subscript ``$A$'' stands for ``asymptotic'') 
consisting of 
curvilinear quadrilateral elements $\Omega_i$, $i=1,\ldots,N^\prime$. 
These elements $\Omega_i$ are the images of the reference square $\Sref = [0,1]^2$ under 
the element maps $M_{A,i}$, $i=1,\ldots,N^\prime$ (we added the subscript ``$A$'' to emphasize 
that they correspond to the asymptotic mesh $\Delta_A$).
They are assumed to satisfy the conditions
(M1)--(M3) of \cite{MelenkSchwab} in order to ensure that the space ${\mathcal S}^p(\Delta_A)$ 
has suitable approximation properties. The element 
maps $M_{A,i}$ are assumed to be analytic with analytic inverse; that is, as in \cite{MelenkSchwab} we require 
for some constants $C_1$, $C_2$, $\gamma > 0$
$$
\|(M_{A,i}^\prime)^{-1} \|_{\infty,\Sref} \leq  C_1, 
\qquad \|D^\alpha M_{A,i}\|_{\infty,\Sref} \leq C_2 \alpha! \gamma^{|\alpha|} 
\qquad \forall \alpha \in \N_0^2, \quad i=1,\ldots,N^\prime . 
$$
We furthermore assume that elements do not have a single vertex on the boundary $\partial\Omega$ but 
only complete, single edges, i.e., the following dichotomy holds: 
\begin{equation}
\mbox{ either } \quad 
\overline{\Omega_i} \cap \partial \Omega = \emptyset \quad \mbox{ or } 
\overline{\Omega_i} \cap \partial \Omega \quad \mbox{ is a single edge of $\Omega_i$}. 
\end{equation}
Edges of curvilinear quadrilaterals are, of course, the images of the edges of $\Sref$ under the element maps. 
For notational convenience, we assume that the edges lying on $\partial\Omega$ are the image of the 
edge $\{0\} \times [0,1]$ 
under the element map. It then follows that these elements have one edge on $\partial\Omega$ and the images 
of the edges $\{y = 1\}$ and $\{y = 0\}$ of $\Sref$ are shared with elements that likewise have one edge on 
$\partial\Omega$. 
For notational convenience, we assume that the elements at the boundary 
are numbered first, i.e., they are the elements $\Omega_i$, $i=1,\ldots,n < N^\prime$. 
For a parameter $\lambda > 0$ and a degree $p \in \N$, the 
boundary layer mesh $\Delta_{BL} = \Delta_{BL}(\lambda,p)$ is defined as follows. 
\begin{defn}[Spectral Boundary Layer mesh $\Delta_{BL}(\lambda,p)$]
\label{SBL-2D}
Given parameters $\lambda > 0$, $p \in \N$, $\varepsilon \in (0,1]$ and the asymptotic 
mesh $\Delta_A$, the mesh $\Delta_{BL}(\lambda,p)$ is defined as follows: 
\begin{enumerate}
\item $\lambda p\varepsilon \geq 1/2.$ In this case we are in the asymptotic
regime, and we use the asymptotic mesh $\Delta _{A}$.  

\item $\lambda p\varepsilon <1/2.$ In this regime, we need to define
so-called needle elements. This is done by splitting the elements 
$\Omega_{i},i=1,\ldots,n$ into two elements $\Omega _{i}^{need}$ and 
$\Omega_{i}^{reg}.$ To that end, split the reference square $\Sref$ into two elements
\begin{equation*}
S^{need}=\left[ 0,\lambda p\varepsilon \right] \times \lbrack 0,1],
\qquad S^{reg}=
\left[ \lambda p\varepsilon ,1\right] \times \lbrack 0,1],
\end{equation*}
and define the elements $\Omega_i^{need}$, $\Omega_i^{reg}$ as the images of these 
two elements under the element 
map $M_{A,i}$ and the corresponding element maps as the concatination of the 
affine maps  
\begin{align*}
& A^{need}: \Sref \rightarrow S^{need}, 
\qquad (\xi,\eta) \rightarrow (\lambda p \varepsilon \xi, \eta), \\
& A^{reg}: \Sref \rightarrow S^{reg}, 
\qquad (\xi,\eta) \rightarrow (\lambda p \varepsilon + (1-\lambda p \varepsilon) \xi, \eta) 
\end{align*}
with the element map $M_{A,i}$, i.e., $M_i^{need} = M_{A,i} \circ A^{need}$ and $M_i^{reg} = M_{A,i} \circ A^{reg}$. 
Explicitly: 
\begin{align*}
\Omega _{i}^{need} &=M_{A,i}\left( S^{need}\right), &\Omega
_{i}^{reg}&=M_{A,i}\left( S^{reg}\right) , \\
M_{i}^{need}(\xi ,\eta ) &=M_{A,i}\left( \lambda p\varepsilon \xi ,\eta
\right), & M_{i}^{reg}(\xi ,\eta )&=M_{A,i}\left( \lambda p\varepsilon
+(1-\lambda p\varepsilon )\xi ,\eta \right) .
\end{align*}
\end{enumerate}
\end{defn}
\begin{figure}[ht]
\label{fig2Dmesh}
\par
\begin{center}
\includegraphics[width=0.4\textwidth]{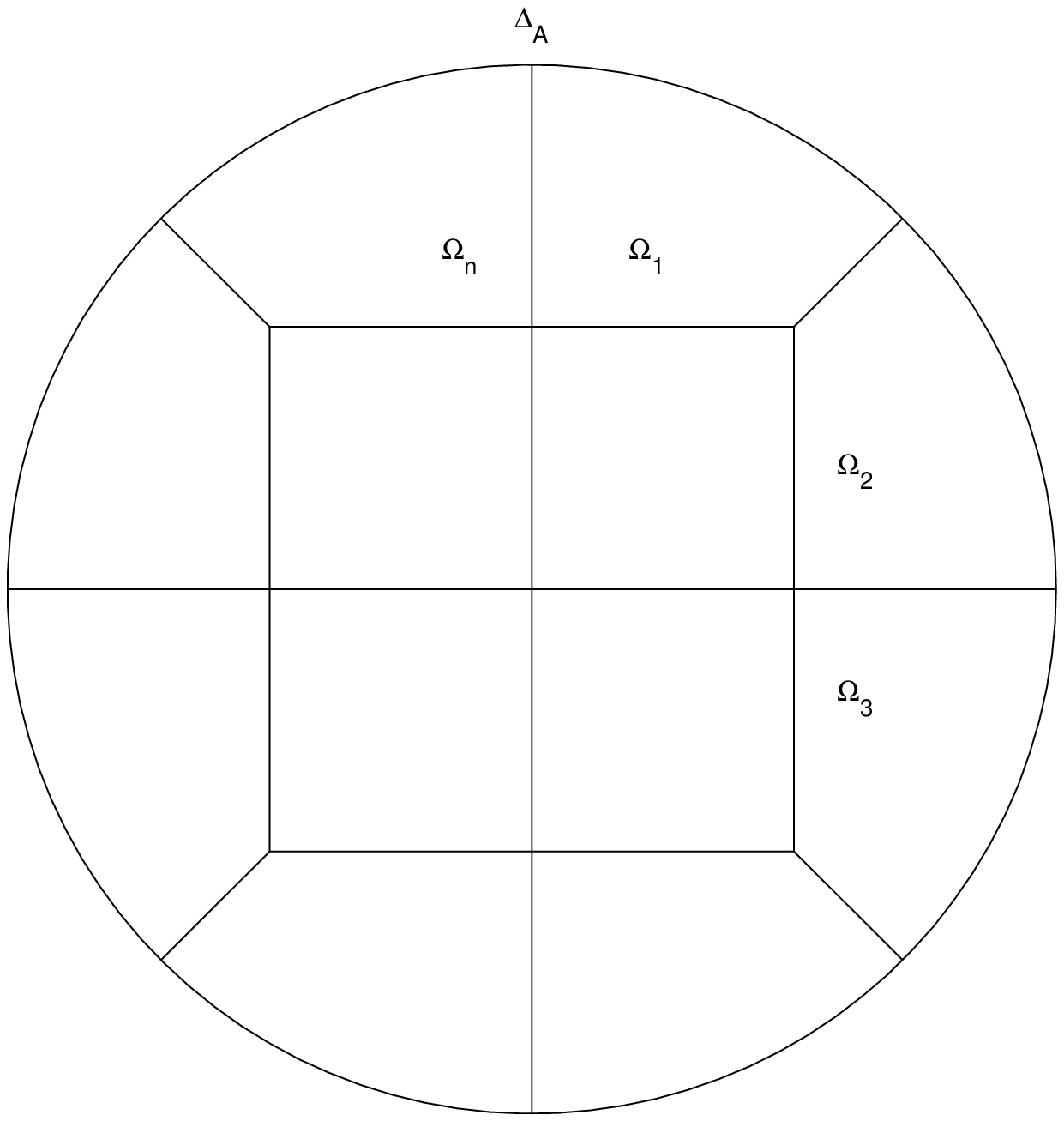} \mbox{ } 
\includegraphics[width=0.4\textwidth]{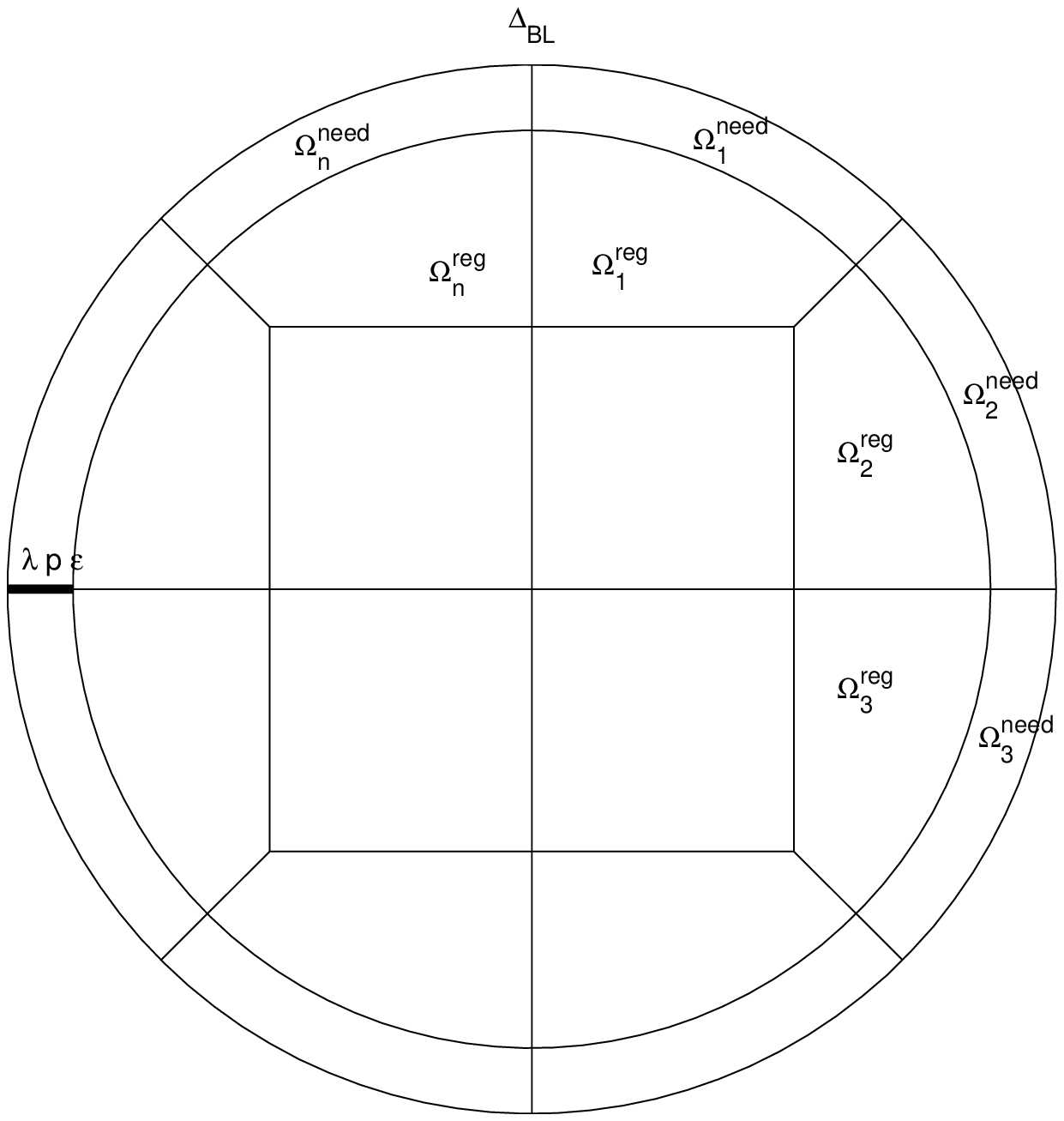}
\end{center}
\caption{\label{fig:2D-meshes} Example of an admissible mesh. Left: asymptotic mesh $\Delta_A$. 
Right: boundary layer  mesh $\Delta_{BL}$.}
\end{figure}
In Figure~\ref{fig:2D-meshes} we show an example of such a mesh construction on the unit circle. 
In total, the mesh $\Delta_{BL}(\lambda,p)$ consists of $N = N^\prime +n$ elements if $\lambda p \varepsilon <1/2$. 

Anticipating that we will need, for the case $\lambda p \varepsilon <1/2$, a decomposition of 
$$
S(\lambda,p):= {\mathcal S}^{p} (\Delta_{BL}(\lambda,p))
$$
into two spaces reflecting the two scales present, we proceed as follows: 
With\ $\Delta _{A}$ the asymptotic (coarse) mesh that resolves the geometry 
we set 
\begin{eqnarray}
\label{S12D}
S_{1}& := &\mathcal{S}^{p}(\Delta _{A}),  \\
\label{Sepsilon2D}
S_{\varepsilon} &:=& \{v \in {\mathcal S}^p(\Delta_{BL}(\lambda,p)) \,|\, 
\operatorname*{supp} v \subset \overline{\Omega}_{\lambda p \varepsilon}\}, 
\end{eqnarray}
where the boundary layer region $\Omega_{\lambda p \varepsilon}$ is defined as 
\begin{equation}
\Omega _{\lambda p\varepsilon }=\overset{n}{\underset{i=1}{\cup }}\Omega_{i}^{need}  . 
\label{Omega_lpe}
\end{equation}
As in the 1D situation, our approximation space 
${\mathcal S}^p(\Delta_{BL}(\lambda,p))$ can be written as a direct sum 
of $S_1$ and $S_\varepsilon$ if $\lambda p \varepsilon <1/2$: 
\begin{lem}
\label{lemma:2D-direct-sum}
Let $\lambda p \varepsilon < 1/2$. Then ${\mathcal S}^p(\Delta_{BL}(\lambda,p))$ is the 
direct sum $S_1 \oplus S_\varepsilon$. Furthermore, we have the inverse estimates 
\begin{eqnarray}
\|u\|_{0,\partial \Omega _{i}} &\leq &Cp\Vert u\Vert _{0,\Omega _{i}}\quad 
\forall  u\in S_{1},\quad i=1,...,N^\prime,  \label{inv2Daa} \\
|u|_{1,\Omega _{i}} &\leq &Cp^{2}\Vert u\Vert _{0,\Omega _{i}}\quad 
\forall  u\in S_{1},\quad i=1,...,N^\prime,  \label{inv2Da} \\
|u|_{1,\Omega _{i}} &\leq &C\frac{p^{2}}{\lambda p\varepsilon }\Vert u\Vert_{0,\Omega _{i}}\quad 
\forall  u\in S_{\varepsilon },i=1,...,n,
\label{inv2Db}
\end{eqnarray}
\end{lem}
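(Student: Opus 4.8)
The plan is to mirror the 1D argument from Section~\ref{sec:stable-decompositions-1D} on an element-by-element basis, using the fact that the boundary layer mesh $\Delta_{BL}(\lambda,p)$ is obtained from the asymptotic mesh $\Delta_A$ by splitting each boundary element $\Omega_i$, $i=1,\ldots,n$, into $\Omega_i^{need}$ and $\Omega_i^{reg}$ via the pullback $M_{A,i}$. First I would establish that ${\mathcal S}^p(\Delta_{BL}(\lambda,p)) = S_1 + S_\varepsilon$, i.e. that every $v \in {\mathcal S}^p(\Delta_{BL}(\lambda,p))$ decomposes. On each element away from the boundary, $v$ already agrees with something in $S_1$, so the only work is at the boundary elements. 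On $\Omega_i$, $i \le n$, pull back via $M_{A,i}$ to the reference square $\Sref = [0,1]^2$; there $v$ becomes a function that is a polynomial in $Q_p$ on $S^{need} = [0,\lambda p\varepsilon]\times[0,1]$ and a polynomial in $Q_p$ on $S^{reg} = [\lambda p\varepsilon,1]\times[0,1]$, continuous across the interface $\{\xi = \lambda p\varepsilon\}$. Since the interface trace (a polynomial of degree $\le p$ in $\eta$) can be extended to a single polynomial $q \in Q_p(\Sref)$ that is constant in $\xi$, writing $v = q + (v-q)$ on the pulled-back picture gives the $S_1$-component (push $q$ forward) plus a piece supported on the needle part after correction. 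One must be slightly careful with the $\xi$-direction degree: the natural $S_1$-candidate is the tensor-product polynomial on $\Sref$ that matches $v$ on the two edges $\xi=0$ and $\xi=1$, which lives in $Q_p(\Sref)$, and the remainder then vanishes on both those edges, hence is supported in the union of needle elements (up to matching the $\Omega_i^{reg}$ contribution, which is handled because on $S^{reg}$ the remainder is a polynomial vanishing on $\xi=1$, so it extends to a $Q_p$ function on $\Sref$ supported away from $\xi=1$ — subtract this off into $S_1$ as well). I would verify that the resulting needle-supported piece has matching traces with neighboring needle elements, using the dichotomy that $\partial\Omega$-elements share complete edges $\{y=0\}$, $\{y=1\}$ only with other $\partial\Omega$-elements, so the decomposition is globally consistent and yields a genuine element of $S_\varepsilon$.

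For directness of the sum, suppose $v \in S_1 \cap S_\varepsilon$. Then $v \in {\mathcal S}^p(\Delta_A)$ (so on each $\Omega_i$, the pullback via $M_{A,i}$ is in $Q_p(\Sref)$) and $\operatorname{supp} v \subset \overline{\Omega}_{\lambda p\varepsilon}$. On a boundary element $\Omega_i$, $i\le n$, the pullback $\widehat v \in Q_p(\Sref)$ vanishes on $S^{reg} = [\lambda p\varepsilon,1]\times[0,1]$, which has nonempty interior; a polynomial vanishing on an open set is identically zero, so $\widehat v \equiv 0$ on $\Sref$, i.e. $v\equiv 0$ on $\Omega_i$. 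On the interior elements $v=0$ because the support condition confines $v$ to $\overline\Omega_{\lambda p\varepsilon}$. Hence $v=0$ and the sum is direct.

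Finally, the inverse estimates (\ref{inv2Daa})--(\ref{inv2Db}) follow by pulling back to the reference square and invoking the one-dimensional polynomial inverse estimates. Estimates (\ref{inv2Daa}) and (\ref{inv2Da}) concern $S_1 = {\mathcal S}^p(\Delta_A)$ on the fixed element $\Omega_i$: since the element maps $M_{A,i}$ are analytic with uniformly bounded inverse ($\|(M_{A,i}^\prime)^{-1}\|_{\infty,\Sref}\le C_1$ and the $D^\alpha M_{A,i}$ bounds), pulling back reduces these to the standard trace inverse estimate $\|\widehat u\|_{0,\partial\Sref}\le Cp\|\widehat u\|_{0,\Sref}$ and the gradient inverse estimate $|\widehat u|_{1,\Sref}\le Cp^2\|\widehat u\|_{0,\Sref}$ for $Q_p(\Sref)$ (tensor-product versions of \cite[Thm.~{3.91}, Thm.~{3.92}]{schwab98}), with the Jacobian factors absorbed into $C$. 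For (\ref{inv2Db}), the needle element $\Omega_i^{need}$ has element map $M_i^{need} = M_{A,i}\circ A^{need}$; the affine map $A^{need}$ contributes anisotropic scaling, stretching by $\lambda p\varepsilon$ in one coordinate and $O(1)$ in the other, so that the relevant directional derivative picks up a factor $(\lambda p\varepsilon)^{-1}$ while the composition with the (uniformly controlled) analytic map $M_{A,i}$ only affects constants — giving exactly $|u|_{1,\Omega_i^{need}} \le C\frac{p^2}{\lambda p\varepsilon}\|u\|_{0,\Omega_i^{need}}$. The main obstacle, though a mild one, is bookkeeping the reference-square decomposition so that the $S_\varepsilon$-piece is genuinely supported on $\overline\Omega_{\lambda p\varepsilon}$ and has consistent traces across the shared needle-element edges; once the edge-dichotomy hypothesis and the tensor-product structure of $Q_p(\Sref)$ are used carefully, this is routine.
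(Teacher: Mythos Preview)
Your argument is ultimately correct and lands on the same construction as the paper, but the decomposition step is unnecessarily circuitous. The intermediate candidates you propose---first the interface-trace extension $q$ constant in $\xi$, then ``the tensor-product polynomial on $\Sref$ that matches $v$ on the two edges $\xi=0$ and $\xi=1$''---are either not uniquely determined or do not by themselves produce a needle-supported remainder (vanishing on the edges $\xi=0,1$ is not the same as vanishing on $S^{reg}$). You recognize this and patch it with ``on $S^{reg}$ the remainder is a polynomial \ldots\ extends to a $Q_p$ function on $\Sref$ \ldots\ subtract this off into $S_1$ as well,'' which is the real content of the construction. But once you make that observation, the preceding scaffolding is superfluous: the pullback $\widetilde v := v|_{\Omega_i}\circ M_{A,i}$ restricted to $S^{reg}$ is already a single polynomial in $Q_p$, and a polynomial on an open set extends \emph{uniquely} to a polynomial $\widehat z_1\in Q_p(\Sref)$. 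Setting $z_1|_{\Omega_i}:=\widehat z_1\circ M_{A,i}^{-1}$ immediately gives $(z-z_1)|_{\Omega_i^{reg}}=0$, hence $z_\varepsilon:=z-z_1$ is supported in $\overline\Omega_{\lambda p\varepsilon}$. This is precisely the paper's one-step construction; your successive subtractions collapse to it. The global continuity of $z_1$ (equivalently of $z_\varepsilon$) across the shared lateral edges of adjacent boundary elements then follows because polynomial extension from $S^{reg}$ to $\Sref$ is unique and the element maps of $\Delta_A$ are compatible on shared edges---you gesture at this correctly via the edge-dichotomy hypothesis.

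Your directness argument (a polynomial vanishing on the open set $S^{reg}$ vanishes identically) and your derivation of the inverse estimates \eqref{inv2Daa}--\eqref{inv2Db} by pulling back to $\Sref$ resp.\ $S^{need}$ and invoking the standard 1D/tensor-product polynomial inverse inequalities match the paper's proof.
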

\begin{proof} 
The claim that ${\mathcal S}^p(\Delta_{BL}(\lambda,p)) = S_1 \oplus S_\varepsilon$ 
follows from the way $\Delta_{BL}(\lambda,p)$ is constructed. 
Let $z \in {\mathcal S}^p(\Delta_{BL}(\lambda,p))$. Define $z_1 \in S_1$ as follows: 
For the internal elements $\Omega_i$ with $i=n+1,\ldots,N^\prime$ 
take $z_1|_{\Omega_i}:= z|_{\Omega_i}$. For $\Omega_i$, $i \in \{1,\ldots,n\}$, which is 
further decomposed into $\Omega_i^{need}$ and $\Omega_i^{reg}$, we consider the 
pull-back $\widetilde z_i:= z|_{\Omega_i} \circ M_{A,i}$. This pull-back $\widetilde z_i$ 
is a piecewise polynomial on $\Sref = S^{need} \cup S^{reg}$. Define the \emph{polynomial} 
$\widehat z_i \in Q(\Sref)$ on the full reference element $\Sref$ by the condition 
$$
\widehat z_i|_{S^{reg}} = \widetilde z_i|_{S^{reg}}
$$
and then set $z_1|_{\Omega_i}:= \widehat z_i \circ M_{A,i}^{-1}$; that is, the restriction 
$\widetilde z_i|_{S^{reg}}$ is extended polynomially to $\Sref$. 
In this way, the function $z_1$ is 
defined elementwise, and the assumptions on the element maps $M_{A,i}$ of the asymptotic mesh $\Delta_A$
ensure that $z_1 \in H^1(\Omega)$, i.e., $z_1 \in S_1$. Since by construction 
$z|_{\Omega_i^{reg}}  = z_1|_{\Omega_i^{reg}}$ for $i=1,\ldots,n$, we conclude that 
$\operatorname*{supp} (z - z_1) \subset \overline{\Omega}_{\lambda p \varepsilon}$ and therefore 
$z_\varepsilon:= z - z_1 \in S_\varepsilon$. The construction also shows the uniqueness of the decomposition. 

The inverse estimates (\ref{inv2Daa}), (\ref{inv2Da}), (\ref{inv2Db}) can be seen as follows. The estimate
(\ref{inv2Da}) 
 is an easy consequence of the assumptions on the element maps $M_{A,i}$ of the asymptotic 
mesh $\Delta_A$ and the polynomial inverse estimates
\cite[Thm.~{4.76}]{schwab98}. In a similar manner, the inverse estimate (\ref{inv2Daa}), which estimates
the $L^2$-norm on the boundary $\partial\Omega_i$ of $\Omega_i$ by the $L^2$-norm on $\Omega_i$ 
follows from a suitable application of 1D inverse estimates 
(cf. \cite[eqn. (3.6.4)]{schwab98}).

For the estimate (\ref{inv2Db}), we note that for an element $\Omega_i^{need}$, we can 
estimate for any $v \in S_\varepsilon$ again with assumptions on the element maps $M_{A,i}$  
$$
\|\nabla v\|_{0,\Omega_i^{need}} \sim \|\nabla (v \circ M_{A,i}) \|_{0,S^{need}} 
\leq C \frac{p^2}{\lambda p \varepsilon} \|v \circ M_{A,i}\|_{0,S^{need}} \sim 
C \frac{p^2}{\lambda p \varepsilon} \|v \|_{0,S^{need}},  
$$
where we exploited that $v \circ M_{A,i}$ is a polynomial of degree $p$ and used
the inverse estimate \cite[Thm.~{3.91}]{schwab98}.
\end{proof}
We mention already at this point that we will quantify the contributions $z_1$ and $z_\varepsilon$
of this decomposition in Lemma~\ref{P02Dlemma} ahead. We close this section by pointing out that 
in our setting, one has very good control over the element maps:
There exist
$C > 0$ (depending solely on the asymptotic mesh $\Delta_A$) such that 
\begin{subequations}
\label{eq:control-element-maps}
\begin{eqnarray}
\label{eq:control-element-maps-1}
\|M^\prime_{A,i} \|_{\infty,\Sref} + 
\|(M^\prime_{A,i})^{-1} \|_{\infty,\Sref} 
&\leq & C, 
\quad i=1,\ldots,N^\prime, \\
\label{eq:control-element-maps-2}
\|(M^{reg}_{i})^\prime \|_{\infty,\Sref} + 
\|((M^{reg}_{i})^\prime)^{-1} \|_{\infty,\Sref}
&\leq & C, 
\quad i=1,\ldots,n, \\ 
\label{eq:control-element-maps-3}
\|(M^{need}_{i})^\prime \|_{\infty,\Sref} + 
\|((M^{need}_{i})^\prime)^{-1} \|_{\infty,\Sref}
&\leq & C \frac{1}{\lambda p \varepsilon},
\quad i=1,\ldots,n.
\end{eqnarray}
\end{subequations}

\subsection{Approximation properties of the Spectral Boundary Layer mesh}

By construction, the resulting mesh (in the case $\lambda p \varepsilon < 1/2$)
$$\Delta _{BL} = \Delta_{BL}(\lambda,p) 
=\left\{ \Omega _{1}^{need},...,\Omega_{n}^{need},
\Omega _{1}^{reg},...,\Omega _{n}^{reg},\Omega _{n+1},...,\Omega_{N}\right\} 
$$ 
is a regular admissible mesh in the sense of \cite{MelenkSchwab}. Therefore, 
\cite{MelenkSchwab} gives that the space 
$$
S_0(\lambda,p):= {\mathcal S}^p_0(\Delta_{BL}(\lambda,p))
$$
has the following approximation properties:  

\begin{proposition}[\cite{MelenkSchwab}]
\label{2Dapprox}
Let $u$ be the solution to (\ref{BuvFv2D}) and assume that (\ref{analytic_data_2D})
holds. Then there exist constants $\lambda_0$, $\lambda_1$, $C$, $\beta > 0$ 
independent of $\varepsilon \in (0,1]$ and $p \in \N$, such that 
the following is true: For every $p$ and every $\lambda \in (0,\lambda_0]$ with 
$\lambda p \ge \lambda_1$ 
there exists $\pi_{p} u \in \mathcal{S}_{0}^{{p}}(\Delta_{BL}(\lambda,p) )$ 
such that
\begin{equation*}
\left\Vert u-\pi_{p} u \right\Vert _{\infty, \Omega}+\varepsilon 
\left\Vert \nabla (u-\pi_{p} u) \right\Vert _{\infty,\Omega }\leq Cp^{2}\left( \ln
p+1\right) ^{2}e^{-\beta p\lambda }.
\end{equation*}
%
%
\end{proposition}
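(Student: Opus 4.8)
The plan is to reduce Proposition~\ref{2Dapprox} to the known $hp$-approximation theory on regular admissible meshes developed in \cite{MelenkSchwab}, exactly as in the 1D analogue (Lemma~\ref{lemma:1D-approximation}), the only genuinely new ingredient being a sharper treatment of the boundary layer part à la \cite{schwab-suri96} so that a factor $\varepsilon^{1/2}$ is not lost. First I would invoke the two-scale regularity decomposition $u = w_M + \chi u_M^{BL} + r_M$ of \cite{MelenkSchwab} (displayed, in the commented-out block, as \eqref{expansion}), with $M \sim p$, so that $r_M$ is exponentially small in $p$ in every relevant Sobolev norm, $w_M$ is analytic with $\varepsilon$-independent bounds, and $u_M^{BL}$ is a boundary-fitted boundary layer with the anisotropic estimates $|\partial_\rho^k \partial_\theta^q u_M^{BL}| \lesssim q! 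K_2^q K_1^k \varepsilon^{-k} e^{-\beta\rho/\varepsilon}$. Since $\lambda p \ge \lambda_1$, the needle layer of width $\lambda p \varepsilon$ captures many multiples of $\varepsilon$, which is what makes the argument work.

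Next I would handle the three pieces separately. The smooth part $w_M$ and the remainder $r_M$ are approximated on $\Delta_{BL}(\lambda,p)$ precisely as in \cite{MelenkSchwab}: the analyticity of $w_M$ together with the analyticity of the element maps $M_{A,i}$ gives robust exponential convergence of a standard $hp$-interpolant in $W^{1,\infty}$, and $r_M$ is simply absorbed since $\|r_M\|_{H^k} \lesssim \varepsilon^{-k}(\varepsilon K(2M+2))^{2M+2}$ is exponentially small for $M\sim p$. The boundary layer part $\chi u_M^{BL}$ is the crux: I would pull back to the boundary-fitted coordinates $(\rho,\theta)$, in which $\chi u_M^{BL}$ is essentially a tensor product of a one-dimensional layer profile $e^{-\beta\rho/\varepsilon}$ in $\rho$ and an analytic function in $\theta$. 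In the $\rho$-direction I would apply the one-dimensional boundary-layer approximation result, Lemma~\ref{lemma:1D-bdy-layer-approximation}, on the two elements $[0,\lambda p\varepsilon]$ and $[\lambda p\varepsilon, \rho_0]$ (the needle element and its regular neighbor), using the Schwab--Suri modification to guarantee that the approximant on the outer element is only $O(\sqrt\varepsilon\, e^{-\lambda p})$ rather than $O(e^{-\lambda p})$; in the $\theta$-direction I would use a standard one-dimensional $hp$-approximation of an analytic function. Tensorizing these, and using that the element maps $M_i^{need}$, $M_i^{reg}$ are controlled by \eqref{eq:control-element-maps}, yields an approximant of $\chi u_M^{BL}$ in ${\mathcal S}^p(\Delta_{BL}(\lambda,p))$ with the claimed $W^{1,\infty}$ bound; the polynomial factor $p^2(\ln p + 1)^2$ is the usual price of passing from $L^2$-type tensor-product estimates to $L^\infty$ via inverse inequalities and of matching traces across the needle/regular interface. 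Finally I would arrange the traces so that the three approximants glue to an $H^1(\Omega)$ function vanishing on $\partial\Omega$, i.e., lies in ${\mathcal S}^p_0(\Delta_{BL}(\lambda,p))$, and collect the three estimates by the triangle inequality.

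The main obstacle I anticipate is the two-dimensional, tensor-product treatment of the boundary layer together with the matching of polynomial degrees and traces across the needle/regular element interface along $\partial\Omega$: one must keep the approximation conforming in $H^1(\Omega)$ and zero on $\partial\Omega$ while simultaneously exploiting the Schwab--Suri trick in the $\rho$-variable \emph{and} the analytic $hp$-estimate in the $\theta$-variable, all in the curvilinear boundary-fitted frame where the cut-off $\chi$ and the geometry enter. This is exactly the point where \cite{MelenkSchwab} (and its refinement in \cite{mB}) does the heavy lifting, so in practice the proof will consist of citing those approximation results for $w_M$ and $r_M$ verbatim and supplying the one extra $O(\sqrt\varepsilon)$ gain on the outer element from Lemma~\ref{lemma:1D-bdy-layer-approximation}; everything else is routine bookkeeping of the analytic constants $C_1$, $C_2$, $\gamma$ of the element maps and of the layer constants $K_1$, $K_2$, $\beta$.
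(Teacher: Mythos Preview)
The paper does not give a proof of Proposition~\ref{2Dapprox}; it is quoted as a result of \cite{MelenkSchwab}. Your outline---decompose $u = w_M + \chi u_M^{BL} + r_M$, approximate $w_M$ and $r_M$ by direct citation, and treat the boundary layer by tensor-product Gau{\ss}--Lobatto interpolation in boundary-fitted coordinates---is indeed the strategy of \cite{MelenkSchwab} and suffices for the stated estimate.

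However, you are proving more than is asked. Proposition~\ref{2Dapprox} bounds $\|u-\pi_p u\|_{\infty,\Omega} + \varepsilon\|\nabla(u-\pi_p u)\|_{\infty,\Omega}$; no $\sqrt{\varepsilon}$ gain is claimed, and the Schwab--Suri modification of the approximant on the outer element is not needed here. That refinement is exactly the content of the separate Corollary~\ref{cor:2Dapprox2} (whose proof the paper does supply, via Lemma~\ref{lemma:2D-bdy-layer-approximation}), and your proposal is in effect a sketch of that corollary rather than of the proposition. The correct 1D analogue of Proposition~\ref{2Dapprox} is Proposition~\ref{prop:melenk97}, not Lemma~\ref{lemma:1D-approximation}. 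A minor point: the factor $p^2(\ln p + 1)^2$ in \cite{MelenkSchwab} comes from the Lebesgue constant of the tensor-product Gau{\ss}--Lobatto interpolant combined with a Markov inequality (cf.\ the proof of Lemma~\ref{lemma:2D-bdy-layer-approximation}), not primarily from trace-matching corrections.
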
 
We mention in passing that Proposition~\ref{2Dapprox} provides robust exponential convergence 
in the energy norm. However, as in the 1D case of Lemma~\ref{lemma:1D-approximation}, we can modify
the boundary layer part of the approximant of Proposition~\ref{2Dapprox}, so as to be able to 
approximate at a robust exponential rate in the balanced norm. This is achieved with the following 
2D analog of Lemma~\ref{lemma:1D-bdy-layer-approximation}.
\begin{lem} 
\label{lemma:2D-bdy-layer-approximation}
Let $v$ be defined on $S = [0,1]^2$, and let $v$ be analytic on $[0,d_0] \times [0,1]$ for some fixed 
$d_0 \in (0,1]$. Assume that for some $C_v$, $\gamma_v>0$ and $\varepsilon \in (0,1]$, the function
$v$ satisfies the following hypotheses: 
\begin{itemize}
\item[(R1)]
For every $\xi \in (0,d_0)$, the {\em stretched function}
$\widehat v_{\xi}: S \rightarrow \R$ given by $\widehat v_{\xi} (x,y):= u(x \xi,y)$, satisfies 
$$
\|D^\alpha \widehat v_{\xi} \|_{\infty,S} \leq C_v \gamma_v^{|\alpha|} \max\{|\alpha|+1,\xi/\varepsilon\}^{|\alpha|} 
\qquad \forall \alpha \in \N_0^2.
$$
\item[(R2)] 
The function $v$ satisfies 
$$
\sup_{y \in [0,1]} |\nabla^n v(x,y)| \leq C_v \varepsilon^{-n} e^{-x/\varepsilon} \qquad \forall x \in [0,1], 
\quad n \in \{0,1\}. 
$$
\end{itemize}
Then there are constants $C$, $\beta$, $\eta > 0$ (depending only on $\gamma_v$) such that under the assumption 
$$
\frac{\xi}{p \varepsilon} \leq \eta,
$$
the following is true for the mesh $\Delta_\xi = \{S_\xi^{need}, S_\xi^{reg}\}$ with 
$S_\xi^{need}:= [0,\xi]\times [0,1]$ and 
$S_\xi^{reg}:= [\xi,1]\times [0,1]$:  
There there is a piecewise polynomial  approximation $I_p v \in {\mathcal S}^{p}(\Delta_\xi)$ with
the following properties: 
\begin{enumerate}[(i)]
\item
On the  two edges $x = 0$ and $x = 1$ of $S$, the approximation $I_p v$ coincides with the Gau{\ss}-Lobatto
interpolant of $v$. On the edge $(0,\xi) \times \{0\}$, $I_p v$ is given by the Gau{\ss}-Lobatto interpolant
corrected by $(1-\sqrt{\varepsilon}) \frac{x}{\xi} v(\xi,0)$ (so that $(I_p v)(\xi,0) = \sqrt{\epsilon} v(\xi,0)$), 
and on the edge $(\xi,1) \times \{0\}$, $I_p v$ is the linear polynomial interpolating the values 
$\sqrt{\varepsilon} v(\xi,0)$ and $v(1,0)$ at the endpoints. 
$I_p v$ is defined analogously on the edges $(0,\xi) \times \{1\}$ and $(\xi,1) \times \{1\}$.
\item The approximation $I_p v$ satisfies 
\begin{align*}
& \|(v - I_p v)\|_{\infty,S_\xi^{need}} + \xi \|\partial_x (v - I_p v)\|_{\infty,S_\xi^{need}} 
+ \|\partial_y (v - I_p v)\|_{\infty,S_\xi^{need} }
 \\
&
\leq C C_v \left[ e^{-\beta p} + p^2 (1 + \ln p)^2  e^{-\xi/\varepsilon}\right], \\
& \|v - I_p v\|_{\infty,S_\xi^{reg}} \leq C C_v (1+\ln p)^2 e^{-\xi/\varepsilon}, \\ 
& \|v - I_p v\|_{0,S_\xi^{reg}} + \varepsilon \|\nabla (v - I_p v)\|_{0,S_\xi^{reg}} \leq 
C C_v p^2 (1+\ln p)^2 \sqrt{\varepsilon} e^{-\xi/\varepsilon}. 
\end{align*}
\end{enumerate}
\end{lem}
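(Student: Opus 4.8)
The plan is to reduce the two-dimensional construction to a tensor-product of one-dimensional approximations, so that Lemma~\ref{lemma:1D-bdy-layer-approximation} and the standard $hp$-approximation theory on the reference square do the heavy lifting. First I would work on the stretched reference square: the needle element $S_\xi^{need} = [0,\xi]\times[0,1]$ is the image of $\Sref=[0,1]^2$ under $(x,y)\mapsto(\xi x, y)$, and hypothesis (R1) is precisely the statement that the stretched function $\widehat v_\xi$ has the analytic-type derivative bounds with ``effective degree parameter'' $K=\xi/\varepsilon$. On such a function, the tensor-product Gau{\ss}--Lobatto interpolant (or the operator $\mathcal I_p$ of \cite[Thm.~{3.14}]{schwab98}) converges at a robust exponential rate $e^{-\beta p}$ in $H^1(\Sref)$ and (via the 2D Sobolev embedding, which costs the $\ln p$ factors) in $L^\infty(\Sref)$, provided $\xi/(p\varepsilon)\le\eta$ for $\eta$ small enough depending on $\gamma_v$; this is exactly the 2D analog of estimate (\ref{eq:lemma:1D-bdy-layer-approximation-20000}) and gives the $e^{-\beta p}$ term. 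Pulling back to $S_\xi^{need}$ inserts the scaling factors: the $\partial_x$-derivative picks up a $\xi^{-1}$, which is why the stated estimate weights $\partial_x(v-I_pv)$ by $\xi$ and leaves $\partial_y$ unweighted.

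Next I would handle the correction term à la Schwab--Suri. Just as in the proof of Lemma~\ref{lemma:1D-bdy-layer-approximation}, I modify the interpolant along the $y=0$ and $y=1$ edges so that on $S_\xi^{reg}$ the approximation can be taken cheaply. Concretely, the interpolant on the edge $(0,\xi)\times\{0\}$ is corrected by $(1-\sqrt{\varepsilon})\frac{x}{\xi}v(\xi,0)$, so the trace value at $(\xi,0)$ becomes $\sqrt{\varepsilon}\,v(\xi,0)$ instead of $v(\xi,0)$; by hypothesis (R2), $|v(\xi,0)|\le C_v e^{-\xi/\varepsilon}$, so this perturbation has size $O(C_v e^{-\xi/\varepsilon})$ in $L^\infty$ and contributes (after the inverse/Sobolev factors) the $p^2(1+\ln p)^2 e^{-\xi/\varepsilon}$ term on $S_\xi^{need}$. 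On $S_\xi^{reg}=[\xi,1]\times[0,1]$ I take $I_pv$ to be the tensor-product polynomial that is linear in $x$ interpolating between the (corrected) trace at $x=\xi$ and the trace at $x=1$, matching the Gau{\ss}--Lobatto interpolant of $v$ on the edges $x=1$, $y=0$, $y=1$. Using (R2) again, on $S_\xi^{reg}$ we have $\|v\|_{\infty}+\varepsilon^{-1/2}\|v\|_{0}+\sqrt{\varepsilon}\|\nabla v\|_{0}\lesssim C_v e^{-\xi/\varepsilon}$ (the $\varepsilon^{-1/2}$ gain is because $e^{-x/\varepsilon}$ is integrated against $dx$ on an interval of length $O(1)$), and the constructed $I_pv$ on $S_\xi^{reg}$ satisfies the same kind of bound with at most polynomial-in-$p$ and $\ln p$ losses coming from stability of Gau{\ss}--Lobatto interpolation in $L^\infty$; the triangle inequality then gives the three stated estimates on $S_\xi^{reg}$.

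A technical point worth spelling out is why the two pieces glue into a genuine element of ${\mathcal S}^p(\Delta_\xi)$, i.e. why $I_pv$ is continuous across the interface $\{x=\xi\}$: both the needle-side and regular-side polynomials must have the same trace on that edge, and the same must hold at the shared edges $y=0$, $y=1$ between neighbouring needle/regular pairs. This forces the edgewise prescriptions in item~(i) to be mutually consistent, and one checks directly that the corrected value $\sqrt{\varepsilon}\,v(\xi,0)$ at the corner $(\xi,0)$ is the common value used by both the $(0,\xi)\times\{0\}$ rule and the $(\xi,1)\times\{0\}$ linear-interpolation rule, and similarly at $(\xi,1)$; along $\{x=\xi\}$ itself one uses the restriction of a single univariate Gau{\ss}--Lobatto interpolant in $y$. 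I would organize the proof so that $I_pv$ is first defined on $S_\xi^{need}$ via a corrected tensor-product projector and then extended to $S_\xi^{reg}$ by requiring polynomial degree $p$, matching the interface trace, and matching the prescribed boundary traces; well-posedness of this is exactly the 2D counterpart of the ``$\pi^1\to\pi^2$ then linear on $I_2$'' bookkeeping in the 1D lemma.

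The main obstacle I anticipate is not any single estimate but the combinatorics of keeping all edge and corner values consistent while simultaneously tracking the correct powers of $\xi$, $\varepsilon$, $p$, and $\ln p$ through the anisotropic scaling $(x,y)\mapsto(\xi x,y)$. In particular, the $p^2(1+\ln p)^2$ factors must be shown to arise only from (a) the $L^\infty$-stability of Gau{\ss}--Lobatto interpolation in 2D and (b) a polynomial inverse estimate used to pass from an $H^1$- or $L^2$-bound on the $\varepsilon^{-\beta p}$ part to an $L^\infty$-bound, and not from the boundary-layer part, which must retain the clean $e^{-\xi/\varepsilon}$ (or $\sqrt{\varepsilon}\,e^{-\xi/\varepsilon}$) behaviour without hidden $\varepsilon^{-1}$ losses. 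Getting the anisotropic Sobolev embedding on $S_\xi^{need}$ right — so that $\partial_x$ is measured with the weight $\xi$ and $\partial_y$ without — is the place where a sign error in the scaling would propagate, so I would verify that step against the 1D estimate (\ref{eq:lemma:1D-bdy-layer-approximation-100}) as a sanity check.
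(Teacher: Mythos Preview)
Your overall strategy matches the paper's: piecewise Gau{\ss}--Lobatto interpolation plus a Schwab--Suri correction, with the anisotropic scaling on $S_\xi^{need}$ handled via (R1). The analysis on $S_\xi^{need}$ is fine. The gap is in the correction step.

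You describe correcting \emph{only along the edges} $y=0$ and $y=1$, and you say that ``along $\{x=\xi\}$ itself one uses the restriction of a single univariate Gau{\ss}--Lobatto interpolant in $y$''. That interface trace then has size $(1+\ln p)^2 e^{-\xi/\varepsilon}$, not $\sqrt{\varepsilon}\,e^{-\xi/\varepsilon}$. If on $S_\xi^{reg}$ you interpolate linearly in $x$ between this uncorrected trace and the trace at $x=1$, you get $\|I_p v\|_{0,S_\xi^{reg}}\sim e^{-\xi/\varepsilon}$, and the third estimate in (ii) fails by a factor $\varepsilon^{-1/2}$. (Your description is also internally inconsistent: an uncorrected Gau{\ss}--Lobatto trace on $\{x=\xi\}$ takes the value $v(\xi,0)$ at the corner, not $\sqrt{\varepsilon}\,v(\xi,0)$.)

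The fix, which is what the paper does, is to apply the correction uniformly in $y$: with $v_1$ the piecewise Gau{\ss}--Lobatto interpolant, set
\[
I_p v(x,y)=
\begin{cases}
v_1(x,y)-(1-\sqrt{\varepsilon})\,\dfrac{x}{\xi}\,v_1(\xi,y), & (x,y)\in S_\xi^{need},\\[2mm]
\sqrt{\varepsilon}\,v_1(\xi,y)\,\dfrac{1-x}{1-\xi}+v_1(1,y)\,\dfrac{x-\xi}{1-\xi}, & (x,y)\in S_\xi^{reg}.
\end{cases}
\]
Now the entire interface trace is $\sqrt{\varepsilon}\,v_1(\xi,\cdot)$, continuity across $\{x=\xi\}$ is automatic, the edge prescriptions of item~(i) are satisfied, and on $S_\xi^{reg}$ one has $\|I_p v\|_{\infty}\lesssim(1+\ln p)^2\sqrt{\varepsilon}\,e^{-\xi/\varepsilon}$ and $\|\nabla I_p v\|_{\infty}\lesssim p^2(1+\ln p)^2\sqrt{\varepsilon}\,e^{-\xi/\varepsilon}$, which together with (R2) gives the claimed bounds by the triangle inequality.
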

\begin{proof}
As in the corresponding 1D result (Lemma~\ref{lemma:1D-bdy-layer-approximation}), we construct $I_p v$
in two steps. In the first step, we study the approximation $v_1$ which is given by the piecewise 
Gau{\ss}-Lobatto interpolant. In the second step, we modify $v_1$ to obtain the additional factor 
$\sqrt{\varepsilon}$ for the error in the $L^2$-based norms 
on the large element $S_\xi^{reg}$.

{\em Step 1:}
For $\xi \in (0,d_0)$, let $v_1$ be the piecewise Gau{\ss}-Lobatto interpolant of $v$ on the
mesh $\Delta_\xi$. For simplicity, we assume $\xi \leq 1/2$. 
The error analysis for $v - v_1$ can be extracted from the proof of 
\cite[Thm.~{3.12}]{MelenkSchwab}; we highlight here the main arguments for completeness' sake. 
The one-dimensional Gau{\ss}-Lobatto interpolation operator $i_p:C([0,1]) \rightarrow \Pi_p$ has 
the stability property $\|i_p\|_{\infty,[0,1]} \leq C (1 + \ln p)$ by \cite{suendermann80}. Together
with a polynomial inverse estimate (Markov's inequality) we get on $S_\xi^{reg}$:
\begin{align*}
\|v_1\|_{\infty,S_\xi^{reg}} &\leq C (1 + \ln p)^2 \|v\|_{\infty,S_\xi^{reg}} 
\leq C C_v (1 + \ln p)^2 e^{-\xi/\varepsilon}, \\
\|\nabla v_1\|_{\infty,S_\xi^{reg}} &\leq C p^2 \|v_1\|_{\infty,S_\xi^{reg}} 
\leq C p^2 (1 + \ln p)^2 \|v\|_{\infty,S_\xi^{reg}} 
\leq C C_v p^2 (1 + \ln p)^2 e^{-\xi/\varepsilon}. 
\end{align*}
The error analysis for the Gau{\ss}-Lobatto interpolation on $S_\xi^{need}$
is achieved by (anisotropically) scaling $S_\xi^{need}$ to the reference element $S = [0,1]^2$. 
In order to make use of the regularity properties of the scaled function  $\widehat v$, we first observe that
for $n \in \N_0$
\begin{align*}
\max\{n+1,\xi/\varepsilon\}^n  &= 
\max\{(n+1)^n,\frac{1}{n!} (\xi/\varepsilon)^n  n!\}  
\leq \max\{(n+1)^n,n! e^{\xi/\varepsilon}\} 
\leq n! e^{\xi/\varepsilon} \frac{(n+1)^n}{n!} 
\\
&\leq C n! e^n e^{\xi/\varepsilon},
\end{align*} 
for some $C > 0$, where the last inequality follows from Stirling's formula. 
The tensor product Gau{\ss}-Lobatto interpolant $\widehat v_1$ of the stretched function $\widehat v_\xi$ 
satisfies on $S$ 
$$
\|\widehat v_\xi - \widehat v_1 \|_{\infty,S} + 
\|\nabla( \widehat v_\xi - \widehat v_1) \|_{\infty,S} 
\leq C C_v e^{\xi/\varepsilon} e^{-\beta p},  
$$
for some $C$, $\beta > 0$ that depend solely on $\gamma_v$. Returning to $S_\xi^{need}$, we get  for the 
Gau{\ss}-Lobatto interpolation error 
\begin{align*}
\|v - v_1 \|_{\infty,S_\xi^{need}} + 
\xi \|\partial_x ( v - v_1) \|_{\infty,S_\xi^{need}} + 
\|\partial_y ( v - v_1) \|_{\infty,S_\xi^{need}} 
\leq C C_v e^{\xi/\varepsilon} e^{-\beta p}.  
\end{align*}

{\em Step 2:} We define $I_p v$ as follows (thus correcting $v_1$): 
$$
I_p v(x,y):= 
\begin{cases}
v_1(x,y) - (1-\sqrt{\varepsilon}) v_1(\xi,y) \frac{x}{\xi}, & (x,y) \in S_\xi^{need} \\
\sqrt{\varepsilon} v_1(\xi,y) \frac{1-x}{1-\xi} + \frac{x-\xi}{1-\xi} v_1(1,y), & (x,y) \in S_\xi^{reg}.  
\end{cases}
$$
We note 
\begin{align*}
&\sup_{y \in [0,1]} |v_1(\xi,y)| \leq C C_v (1 + \ln p)^2 e^{-\xi/\varepsilon}, 
\qquad   
\sup_{y \in [0,1]} |\partial_y v_1(\xi,y)| \leq C C_v p^2 (1 + \ln p)^2 e^{-\xi/\varepsilon}, \\
&\sup_{y \in [0,1]} |v_1(1,y)| \leq C C_v (1 + \ln p)^2 e^{-1/\varepsilon}, 
\qquad   
\sup_{y \in [0,1]} |\partial_y v_1(1,y)| \leq C C_v p^2 (1 + \ln p)^2 e^{-1/\varepsilon}. 
\end{align*}
From this, we get on $S_\xi^{need}$ 
\begin{align*}
&\|v - I_p v\|_{\infty,S_\xi^{need}} + 
\xi \|\partial_x (v - I_p v)\|_{\infty,S_\xi^{need}} + 
\|\partial_y (v - I_p v)\|_{\infty,S_\xi^{need}} \\
&
\leq C C_v \left[ e^{\xi/\varepsilon} e^{-\beta p} + p^2 (1+\ln p)^2 e^{-\xi/\varepsilon} \right].
\end{align*} 
The hypothesis $\xi/\varepsilon \leq \eta p$ implies that 
$e^{\xi/\varepsilon} e^{-\beta p} \leq e^{(\eta - \beta)p}$, so that $\eta < \beta$ guarantees 
exponential convergence (in $p$). The claimed approximation properties on $S_\xi^{need}$ follow.

The approximations on $S_\xi^{reg}$ are achieved by the triangle inequality
$\|v - I_p v\| \leq \|v\| + \|I_p v\|$. The control of $I_p v$ is easily achieved by observing 
$$
\|I_p v\|_{\infty,S_\xi^{reg}} \leq C C_v (1 + \ln p)^2 \sqrt{\varepsilon} e^{-\xi/\varepsilon} \;, 
\; 
\|\nabla I_p v\|_{\infty,S_\xi^{reg}} \leq C C_v p^2 (1 + \ln p)^2 \sqrt{\varepsilon} e^{-\xi/\varepsilon}.
$$
Note that we suppressed the contributions arising from $v_1(1,\cdot)$ since our assumption $\xi \leq 1/2$ provides 
$e^{-1/\varepsilon} \leq C \sqrt{\varepsilon} e^{-\xi/\varepsilon}$ for some $C > 0$. 
\end{proof}
The improved treatment of the boundary layer contribution allows us to sharpen 
the approximation result of Proposition~\ref{2Dapprox} in the balanced norm: 
\begin{cor}
\label{cor:2Dapprox2}
Under the assumptions of Proposition~\ref{2Dapprox}, there exist constants 
$\lambda_0$, $\lambda_1$, $C$, $\beta > 0$ 
independent of $\varepsilon \in (0,1]$ and $p \in \N$, such that 
the following is true: For every $p$ and every $\lambda \in (0,\lambda_0]$ with 
$\lambda p \ge \lambda_1$,  
there exists $\widetilde \pi_{p} u \in \mathcal{S}_{0}^{{p}}(\Delta_{BL}(\lambda,p) )$ 
such that
\begin{equation*}
\|u- \widetilde \pi_{p} u\|_{\infty,\Omega} + 
\varepsilon^{1/2}\left\Vert \nabla (u-\widetilde \pi_{p} u) \right\Vert _{0,\Omega }\leq Cp^{2}\left( \ln
p+1\right) ^{2}e^{-\beta p\lambda }.
\end{equation*}
\end{cor}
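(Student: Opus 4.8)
The plan is to imitate the one-dimensional argument of Lemma~\ref{lemma:1D-approximation}: keep the approximations of the smooth part and of the remainder from the construction underlying Proposition~\ref{2Dapprox} (see \cite{MelenkSchwab}) and replace only the treatment of the boundary layer contribution by the improved approximant of Lemma~\ref{lemma:2D-bdy-layer-approximation}, which supplies the extra factor $\sqrt{\varepsilon}$ on the regular sub-elements $\Omega_i^{reg}$. First I would dispose of the asymptotic regime $\lambda p\varepsilon\ge 1/2$: there $\Delta_{BL}(\lambda,p)=\Delta_A$, so I take $\widetilde\pi_p u:=\pi_p u$ from Proposition~\ref{2Dapprox}. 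The $L^\infty$-bound is then immediate, and since $\varepsilon^{-1/2}\le\sqrt{2\lambda p}$ in this regime, the energy estimate of Proposition~\ref{2Dapprox} yields
\[
\varepsilon^{1/2}\|\nabla(u-\pi_p u)\|_{0,\Omega}=\varepsilon^{-1/2}\,\varepsilon\|\nabla(u-\pi_p u)\|_{0,\Omega}\lesssim (\lambda p)^{1/2}\,p^2(\ln p+1)^2 e^{-\beta\lambda p},
\]
which is $\lesssim p^2(\ln p+1)^2 e^{-(\beta/2)\lambda p}$ because $(\lambda p)^{1/2}e^{-\beta\lambda p}\lesssim e^{-(\beta/2)\lambda p}$ uniformly in $\lambda p$.

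From now on assume $\lambda p\varepsilon<1/2$, i.e.\ $1/\varepsilon>2\lambda p$, so that $\Delta_{BL}(\lambda,p)$ carries the needle elements $\Omega_i^{need}$, $i=1,\dots,n$. I would invoke the decomposition $u=w_M+\chi u^{BL}_M+r_M$ of \cite{MelenkSchwab} in boundary-fitted coordinates near $\partial\Omega$, with the expansion order $M$ chosen so that $\varepsilon M$ is a fixed small constant; then $w_M$ is analytic with $\varepsilon$-independent bounds, $\chi u^{BL}_M$ is a boundary layer whose normal derivatives scale like $\varepsilon^{-1}$ and which decays like $e^{-\beta\rho/\varepsilon}$ away from $\partial\Omega$, and $\|r_M\|_{k,\Omega}\lesssim\varepsilon^{-k}e^{-c/\varepsilon}$, $k=0,1$, with $r_M|_{\partial\Omega}=0$. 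The approximant $\widetilde\pi_p u$ would be assembled from three elementwise pieces:
\begin{enumerate}[(a)]
\item an $hp$-interpolant $\widetilde\pi_p w\in\mathcal S^p(\Delta_A)\subset\mathcal S^p(\Delta_{BL})$ of $w_M$, chosen to coincide with the tensor-product Gau{\ss}--Lobatto interpolant of $w_M$ on the edges lying on $\partial\Omega$; standard $hp$-theory for analytic functions gives $\|w_M-\widetilde\pi_p w\|_{\infty,\Omega}+\|\nabla(w_M-\widetilde\pi_p w)\|_{0,\Omega}\lesssim e^{-\beta p}$, hence the same for $\|\cdot\|_{\infty,\Omega}+\varepsilon^{1/2}\|\nabla\cdot\|_{0,\Omega}$ since $\varepsilon\le 1$;
\item the trivial approximant $\widetilde\pi_p r_M:=0$, for which $\|r_M\|_{\infty,\Omega}+\varepsilon^{1/2}\|\nabla r_M\|_{0,\Omega}\lesssim\varepsilon^{-3/2}e^{-c/\varepsilon}\lesssim e^{-\beta\lambda p}$ by $1/\varepsilon>2\lambda p$;
\item for $\chi u^{BL}_M$, on each boundary element $\Omega_i$, $i=1,\dots,n$, the push-forward under $M_{A,i}$ of the approximant $I_p v$ of Lemma~\ref{lemma:2D-bdy-layer-approximation} applied to the pull-back $v:=\chi u^{BL}_M\circ M_{A,i}$ with $\xi:=\lambda p\varepsilon$, and on the interior elements the tensor-product Gau{\ss}--Lobatto interpolant of $\chi u^{BL}_M$.
\end{enumerate}
One checks that $v$ satisfies hypotheses (R1), (R2): (R1) follows from the boundary-layer regularity of $u^{BL}_M$ together with the analyticity of $M_{A,i}$ and $M_{A,i}^{-1}$ (the anisotropic stretching by $\xi=\lambda p\varepsilon$ turns $\varepsilon^{-1}$-scaled normal derivatives into $\xi/\varepsilon=\lambda p$-scaled ones, exactly matching the factor $\max\{|\alpha|+1,\xi/\varepsilon\}$ in (R1)), and (R2) from the bi-Lipschitz comparison $\rho\sim x$ near $\partial\Omega$ and $|\chi|\le 1$; the smallness constraint $\xi/(p\varepsilon)=\lambda\le\eta$ holds once $\lambda_0\le\eta$.

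It then remains to check $H^1$-conformity together with the homogeneous boundary condition, and to collect the estimates. Conformity holds because part (i) of Lemma~\ref{lemma:2D-bdy-layer-approximation} prescribes the trace of $I_p v$ on every edge of the two-element reference configuration as a fixed (corrected, resp.\ linear) Gau{\ss}--Lobatto functional of the trace of $v$; hence adjacent boundary elements, and any regular sub-element $\Omega_i^{reg}$ abutting an interior element, carry matching traces (the element maps being compatible along shared edges and the interior piece being a Gau{\ss}--Lobatto interpolant there as well), while the $w$- and $r$-pieces are trivially conforming. On $\partial\Omega$ each of the three pieces equals the Gau{\ss}--Lobatto interpolant of the respective component of $u$, so their sum is the Gau{\ss}--Lobatto interpolant of $u|_{\partial\Omega}=0$, whence $\widetilde\pi_p u\in\mathcal S^p_0(\Delta_{BL}(\lambda,p))$. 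The smooth and remainder contributions are controlled by (a), (b). For $\chi u^{BL}_M$: on each interior element $|\chi u^{BL}_M|+|\nabla(\chi u^{BL}_M)|\lesssim\varepsilon^{-1}e^{-c/\varepsilon}\lesssim e^{-\beta\lambda p}$, so the Gau{\ss}--Lobatto error there is $\lesssim p^2(\ln p+1)^2 e^{-\beta\lambda p}$; on $\Omega_i^{need}$ and $\Omega_i^{reg}$ one transports the estimates of Lemma~\ref{lemma:2D-bdy-layer-approximation}(ii) to the physical elements using the control (\ref{eq:control-element-maps}) on the element maps, $|\Omega_i^{need}|\sim\lambda p\varepsilon$, and $\lambda p\ge\lambda_1$, obtaining, with $\delta:=\chi u^{BL}_M-\widetilde\pi_p(\chi u^{BL}_M)$ and $\xi/\varepsilon=\lambda p$,
\begin{align*}
\|\delta\|_{\infty,\Omega_i^{need}}+\varepsilon^{1/2}\|\nabla\delta\|_{0,\Omega_i^{need}}&\lesssim e^{-\beta p}+p^2(\ln p+1)^2 e^{-\lambda p},\\
\|\delta\|_{\infty,\Omega_i^{reg}}+\varepsilon^{1/2}\|\nabla\delta\|_{0,\Omega_i^{reg}}&\lesssim p^2(\ln p+1)^2 e^{-\lambda p}.
\end{align*}
The decisive point is that the $\sqrt{\varepsilon}$ built into the $S^{reg}_\xi$-estimate of Lemma~\ref{lemma:2D-bdy-layer-approximation} exactly cancels the $\varepsilon^{-1/2}$ incurred in passing from the energy-weighted to the balanced-norm gradient, while on $\Omega_i^{need}$ the factor $(\lambda p\varepsilon)^{-1/2}$ produced by the small element, multiplied by $\varepsilon^{1/2}$, becomes the harmless $(\lambda p)^{-1/2}\le\lambda_1^{-1/2}$. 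Summing over the finitely many elements and merging the exponentials $e^{-\beta p}$ and $e^{-\lambda p}$ into a single $e^{-\beta'\lambda p}$ ($\beta'$ depending only on $\beta$ and $\lambda_0$) would give the claim. The main obstacle is precisely this assembly step: fitting the three elementwise approximants into one $H^1_0(\Omega)$-conforming function without spoiling the $\sqrt{\varepsilon}$-gain on the regular sub-elements — which is exactly what the explicit trace prescriptions of Lemma~\ref{lemma:2D-bdy-layer-approximation}(i) were designed to permit.
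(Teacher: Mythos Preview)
Your proposal is correct and follows essentially the same route as the paper: both keep the approximations of the smooth part $w$ and the remainder $r$ from \cite{MelenkSchwab} and replace only the boundary-layer approximation by the operator $I_p$ of Lemma~\ref{lemma:2D-bdy-layer-approximation}, verifying (R1), (R2) by appeal to the regularity theory in \cite{MelenkSchwab}. The only noteworthy deviation is that you approximate $r_M$ by zero, whereas the paper keeps its elementwise Gau{\ss}--Lobatto interpolant; since $r_M|_{\partial\Omega}=0$ and $r_M$ is exponentially small in $1/\varepsilon$ in all relevant norms, both choices work and conformity/boundary conditions are preserved either way.
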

\begin{proof}
In the case that the mesh $\Delta_{BL}(\lambda,p)$ consists of the asymptotic mesh
$\Delta_A$, we set $\widetilde \pi_p u = \pi_p u$ and the proof follows easily from 
Proposition~\ref{2Dapprox}, since 
$\varepsilon \ge 1/(2 \lambda p) \ge 1/(2 \lambda_1)$. Let, therefore, $\Delta_{BL}(\lambda,p)$ have needle
elements, i.e., the elements $\Omega_i$, $i=1,\ldots,n$ of the asymptotic mesh $\Delta_A$ are further
subdivided into $\Omega_i^{need}$ and $\Omega_i^{reg}$. Our starting point is the proof of 
Proposition~\ref{2Dapprox} in \cite{MelenkSchwab}. There, the approximation is obtained by a piecewise
Gau{\ss}-Lobatto interpolation of the function $u$, which is decomposed into a 
smooth (analytic) part $w$, a boundary layer part $u^{BL}$, and a remainder $r$: 
$$
u = w + u^{BL} + r. 
$$ 
The approximations of the smooth part $w$ and the remainder $r$ are taken to be those of 
\cite{MelenkSchwab}, i.e., the elementwise Gau{\ss}-Lobatto interpolants. The boundary layer part $u^{BL}$, however,
is not approximated by its elementwise Gau{\ss}-Lobatto interpolant but by the elementwise Gau{\ss}-Lobatto
interpolant on the elements $\Omega_i$ with $\overline{\Omega_i } \cap \partial\Omega = \emptyset$,  
with the aid of the operator $I_p$ of Lemma~\ref{lemma:2D-bdy-layer-approximation}. Inspection of the procedure
in \cite{MelenkSchwab} shows that the regularity hypotheses (R1), (R2) 
of Lemma~\ref{lemma:2D-bdy-layer-approximation} are satisfied and that the approximation result
holds if $\xi = \lambda p \varepsilon$ with $\lambda \leq \lambda_0$ and $\lambda_0$ sufficiently small. 
\end{proof}
\subsection{Robust exponential convergence in balanced norms}
The main result of the paper is the following robust exponential convergence in the 
balanced norm: 
\begin{thm} 
\label{thm:balanced-norm-2D}
There is a $\lambda _{0}>0$ depending only on the functions $b$, $f$ 
and the asymptotic mesh $\Delta_A$ 
such that for every $\lambda \in (0,\lambda _{0}]$, $\varepsilon \in (0,1]$, $p \in \N$, 
the $hp$-FEM space $\mathcal{S}_{0}^{p}(\Delta _{BL}(\lambda,p))$ leads to a finite element 
approximation $u_{FEM}\in \mathcal{S}_{0}^{p}(\Delta _{BL}(\lambda,p))$ satisfying 
\begin{equation*}
\sqrt{\varepsilon }\Vert \nabla (u-u_{FEM})\Vert _{0,\Omega }+
\Vert u-u_{FEM}\Vert _{0,\Omega }\leq Ce^{-\beta p};
\end{equation*}
the constants $C$, $\beta>0$ depend on the choice of $\lambda $ but are
independent of $\varepsilon $ and $p$.
\end{thm}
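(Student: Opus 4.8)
The plan is to mimic the one-dimensional argument of Theorem~\ref{thm:balanced-norm-1D} essentially line by line, with the necessary modifications for the 2D geometry already prepared in Lemma~\ref{lemma:2D-direct-sum} and Corollary~\ref{cor:2Dapprox2}. As in 1D, since $\|u-u_{FEM}\|_{0,\Omega} \le C\|u-u_{FEM}\|_{E,\Omega}$ and robust exponential convergence in the energy norm is already known (it follows from C\'ea's lemma together with Proposition~\ref{2Dapprox}), the task reduces to bounding $\sqrt{\varepsilon}\,\|\nabla(u-u_{FEM})\|_{0,\Omega}$. First I would dispatch the asymptotic case $\lambda p\varepsilon \ge 1/2$: there $\varepsilon^{-1/2} \lesssim (\lambda p)^{1/2}$, so $\sqrt{\varepsilon}\|\nabla(u-u_{FEM})\|_{0,\Omega} \le \varepsilon^{-1/2}\|u-u_{FEM}\|_{E,\Omega} \lesssim (\lambda p)^{1/2} e^{-\sigma p}$, which is still exponentially small. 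Hence the substance is entirely in the regime $\lambda p\varepsilon < 1/2$, where the mesh has needle elements.

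In that regime I would introduce, exactly as in Section~\ref{sec:reduction-to-H1-stability-1D}, the reduced bilinear form ${\mathcal B}_0(u,v) = \langle bu,v\rangle_\Omega$ and the ${\mathcal B}_0$-projection ${\mathcal P}_0: L^2(\Omega) \to S_0(\lambda,p)$. The Galerkin orthogonality computation is identical to (\ref{eq:galerkin-orthogonality-1D}): using that $u-u_{FEM}$ is ${\mathcal B}_\varepsilon$-orthogonal to $S_0(\lambda,p)$ and $u-{\mathcal P}_0 u$ is ${\mathcal B}_0$-orthogonal to $S_0(\lambda,p)$, one obtains $\|u_{FEM}-{\mathcal P}_0 u\|_{E,\Omega}^2 = \varepsilon^2\langle \nabla(u-{\mathcal P}_0 u),\nabla(u_{FEM}-{\mathcal P}_0 u)\rangle_\Omega$, whence $\varepsilon\|\nabla(u_{FEM}-{\mathcal P}_0 u)\|_{0,\Omega} \le \varepsilon\|\nabla(u-{\mathcal P}_0 u)\|_{0,\Omega}$, and then the triangle inequality reduces everything to proving $\|\nabla(u-{\mathcal P}_0 u)\|_{0,\Omega} \lesssim \varepsilon^{-1/2} e^{-\beta p}$. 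This is the 2D analog of Lemma~\ref{lemma:estimate-Pi0u-scalar}. To prove it, I would take the approximant $\widetilde\pi_p u \in S_0(\lambda,p)$ of Corollary~\ref{cor:2Dapprox2}, write $u-{\mathcal P}_0 u = (u-\widetilde\pi_p u) - {\mathcal P}_0(u-\widetilde\pi_p u)$, and decompose the second term via Lemma~\ref{lemma:2D-direct-sum} as ${\mathcal P}_0(u-\widetilde\pi_p u) = z_1 + z_\varepsilon$ with $z_1 \in S_1$, $z_\varepsilon \in S_\varepsilon$. The estimates I would need here are a 2D version of Lemma~\ref{lemma:almost-orthogonal}: a strengthened Cauchy–Schwarz inequality $|{\mathcal B}_0(v_1,v_\varepsilon)| \lesssim \min\{1,\sqrt{\lambda p\varepsilon}\,p\}\,\|v_1\|_{0,\Omega}\|v_\varepsilon\|_{0,\Omega_{\lambda p\varepsilon}}$ for $v_1\in S_1$, $v_\varepsilon\in S_\varepsilon$ (obtainable elementwise from the 1D inverse inequality $|\int_{I_1}\pi_1\pi_2| \lesssim \sqrt{\delta_1/\delta_2}\,p\,\|\pi_1\|_{0,I_2}\|\pi_2\|_{0,I_1}$ applied in the needle direction, using the good control (\ref{eq:control-element-maps}) over element maps), together with $L^2$- and $H^1$-stability bounds on $z_1$, $z_\varepsilon$ in terms of $\|u-\widetilde\pi_p u\|_{0,\Omega}$ and $\|u-\widetilde\pi_p u\|_{0,\Omega_{\lambda p\varepsilon}}$, using the inverse estimates (\ref{inv2Daa})--(\ref{inv2Db}). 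The paper signals this by referring to a forthcoming ``Lemma~\ref{P02Dlemma}'', so I would state and prove that lemma first and then quote it here.

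Assembling: the $H^1$-inverse estimates give $\|\nabla z_1\|_{0,\Omega} \lesssim p^2\|u-\widetilde\pi_p u\|_{0,\Omega} \le C e^{-\beta p}$ and $\|\nabla z_\varepsilon\|_{0,\Omega} \lesssim \frac{p^2}{\lambda p\varepsilon}\big[\|u-\widetilde\pi_p u\|_{0,\Omega_{\lambda p\varepsilon}} + \sqrt{\lambda p\varepsilon}\,p\,\|u-\widetilde\pi_p u\|_{0,\Omega}\big]$. The term $\|u-\widetilde\pi_p u\|_{0,\Omega_{\lambda p\varepsilon}}$ I would bound, as in 1D, using that $u-\widetilde\pi_p u$ vanishes on $\partial\Omega$ and integrating the gradient in the needle (inward normal) direction across the width $\lambda p\varepsilon$ of $\Omega_{\lambda p\varepsilon}$, yielding $\|u-\widetilde\pi_p u\|_{0,\Omega_{\lambda p\varepsilon}} \lesssim \lambda p\varepsilon\,\|\nabla(u-\widetilde\pi_p u)\|_{0,\Omega_{\lambda p\varepsilon}}$; combined with Corollary~\ref{cor:2Dapprox2} (which supplies $\varepsilon^{1/2}\|\nabla(u-\widetilde\pi_p u)\|_{0,\Omega} \lesssim e^{-\beta p}$ after absorbing the algebraic prefactor into a slightly smaller exponential rate) this gives $\|\nabla z_\varepsilon\|_{0,\Omega} \lesssim \varepsilon^{-1/2} e^{-\beta p}$, and also $\|\nabla(u-\widetilde\pi_p u)\|_{0,\Omega} \lesssim \varepsilon^{-1/2} e^{-\beta p}$. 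Summing the three contributions proves the reduced-problem estimate and hence the theorem. I expect the main obstacle to be the 2D strengthened Cauchy–Schwarz inequality and the accompanying stability bounds for the decomposition $z_1+z_\varepsilon$: one must carefully track how the anisotropic needle scaling and the curvilinear element maps interact, making sure (via (\ref{eq:control-element-maps})) that the $(\lambda p\varepsilon)^{-1}$ factors appear only through the needle-direction derivative and that the tangential direction contributes no harmful powers. Everything else is a faithful transcription of the 1D proof, with Lemma~\ref{lemma:2D-direct-sum} and Corollary~\ref{cor:2Dapprox2} playing the roles of the direct-sum decomposition and Lemma~\ref{lemma:1D-approximation}, respectively.
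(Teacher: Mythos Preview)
Your approach is correct and matches the paper's proof essentially line for line. One small correction: the case split should be on the hypothesis $\sqrt{\lambda p\varepsilon}\,p \le c$ of Lemma~\ref{P02Dlemma} rather than on $\lambda p\varepsilon < 1/2$, since the stability bounds for $z_1,z_\varepsilon$ require the former; in the intermediate regime where $\lambda p\varepsilon < 1/2$ but $\sqrt{\lambda p\varepsilon}\,p > c$ the mesh has needle elements yet your energy-norm argument still applies verbatim because $\varepsilon^{-1/2} \lesssim \lambda^{1/2} p^{3/2}$. The paper also bounds $\|u-\widetilde\pi_p u\|_{0,\Omega_{\lambda p\varepsilon}}$ via the $L^\infty$-estimate of Corollary~\ref{cor:2Dapprox2} together with $|\Omega_{\lambda p\varepsilon}| \sim \lambda p\varepsilon$ (cf.\ (\ref{maxNest2})) rather than the Poincar\'e-type integration you propose, but either route suffices.
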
%
The proof is deferred to the end of the section. 
As a corollary, we get exponential convergenence in the maximum norm.

\begin{cor} 
\label{cor:max-norm-estimate-2D}
Let $u$ be the solution of (\ref{BuvFv2D}) and let 
$u_{FEM}\in {\mathcal S}_{0}^{p}(\Delta _{BL}(\lambda,p))$ be its finite element approximation.
Then there exist constants $C$, $\sigma >0$ independent of $\varepsilon\in(0,1]$ and $p\in\N$ such that
\begin{equation*}
\left\Vert u-u_{FEM}\right\Vert _{\infty, \Omega}\leq C e^{-\sigma p}.
\end{equation*}
\end{cor}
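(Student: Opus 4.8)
The plan is to follow the one-dimensional argument (proof of Corollary~\ref{cor:max-norm-estimate-1D}) as closely as possible: decompose $e := u - u_{FEM}$ into an approximation error, controlled by the sharpened estimate of Corollary~\ref{cor:2Dapprox2}, and a discrete part $e_2 \in {\mathcal S}^p_0(\Delta_{BL}(\lambda,p))$ that vanishes on $\partial\Omega$; the latter is handled by inverse estimates, taking care that on the needle elements one has to exploit both the anisotropy and the homogeneous boundary data. First I would dispose of the asymptotic regime $\lambda p\varepsilon \ge 1/2$, where $\Delta_{BL}(\lambda,p) = \Delta_A$ has no needle elements and $\varepsilon^{-1} \le 2\lambda p$ makes all negative powers of $\varepsilon$ harmless: writing $e = (u - \widetilde\pi_p u) + (\widetilde\pi_p u - u_{FEM})$ with $\widetilde\pi_p u$ from Corollary~\ref{cor:2Dapprox2}, the first summand is bounded by Corollary~\ref{cor:2Dapprox2} and the second, a polynomial on each element of the \emph{fixed} mesh $\Delta_A$, by the elementwise inverse estimate $\|\cdot\|_{\infty,\Omega_i} \lesssim p^2 \|\cdot\|_{0,\Omega_i}$ together with $\|\widetilde\pi_p u - u_{FEM}\|_{0,\Omega} \le \|u - \widetilde\pi_p u\|_{0,\Omega} + \|u - u_{FEM}\|_{0,\Omega} \lesssim e^{-\sigma_0 p}$, the latter by Corollary~\ref{cor:2Dapprox2} and Theorem~\ref{thm:balanced-norm-2D}.

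For the layer regime $\lambda p\varepsilon < 1/2$ I again set $e_1 := u - \widetilde\pi_p u$ and $e_2 := \widetilde\pi_p u - u_{FEM} \in {\mathcal S}^p_0(\Delta_{BL}(\lambda,p))$, so $e_2$ vanishes on $\partial\Omega$. The term $\|e_1\|_{\infty,\Omega}$ is controlled directly by Corollary~\ref{cor:2Dapprox2}. For $e_2$, on every element that is \emph{not} of needle type (the interior elements $\Omega_i$, $i>n$, and the regular parts $\Omega_i^{reg}$) the element maps have uniformly bounded derivatives and inverses by (\ref{eq:control-element-maps-1}), (\ref{eq:control-element-maps-2}), so $\|e_2\|_{\infty,\Omega_i} \lesssim p^2 \|e_2\|_{0,\Omega} \lesssim p^2 e^{-\sigma_0 p}$ exactly as in the asymptotic case, using $\|e_2\|_{0,\Omega} \le \|u-\widetilde\pi_p u\|_{0,\Omega} + \|u - u_{FEM}\|_{0,\Omega} \lesssim e^{-\sigma_0 p}$.

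The crux is the needle elements $\Omega_i^{need}$. I would pull $e_2$ back to $\Sref$ via $M_i^{need} = M_{A,i}\circ A^{need}$, obtaining a polynomial $\widehat e_2 \in Q_p(\Sref)$ that vanishes on the edge $\{\xi = 0\}$ (the preimage of $\partial\Omega$). For fixed $\eta$, $\widehat e_2(\cdot,\eta) \in \Pi_p$ vanishes at $\xi=0$, hence $|\widehat e_2(\xi,\eta)| = \bigl|\int_0^\xi \partial_\xi\widehat e_2(s,\eta)\,ds\bigr| \le \|\partial_\xi\widehat e_2(\cdot,\eta)\|_{0,(0,1)}$; taking the supremum in $\eta$ of the polynomial $\eta\mapsto \int_0^1 |\partial_\xi\widehat e_2(\xi,\eta)|^2\,d\xi$ of degree $\le 2p$ via the 1D inverse estimate $\|q\|_{\infty,(0,1)} \lesssim (\deg q)^2 \|q\|_{L^1(0,1)}$ gives $\|\widehat e_2\|_{\infty,\Sref} \lesssim p\,\|\partial_\xi\widehat e_2\|_{0,\Sref}$. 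Undoing the anisotropic scaling (the $\xi$-derivative of $M_i^{need}$ contributes $\lambda p\varepsilon$ and the Jacobian of the change of variables a factor $(\lambda p\varepsilon)^{-1}$, both with $O(1)$ constants by (\ref{eq:control-element-maps})) yields $\|\partial_\xi\widehat e_2\|_{0,\Sref} \lesssim (\lambda p\varepsilon)^{1/2}\|\nabla e_2\|_{0,\Omega_i^{need}}$, so
\[
\|e_2\|_{\infty,\Omega_i^{need}} \;\lesssim\; p\,(\lambda p\varepsilon)^{1/2}\,\|\nabla e_2\|_{0,\Omega}.
\]
Finally $\|\nabla e_2\|_{0,\Omega} \le \|\nabla(u - \widetilde\pi_p u)\|_{0,\Omega} + \|\nabla(u - u_{FEM})\|_{0,\Omega} \lesssim \varepsilon^{-1/2} p^2(\ln p + 1)^2 e^{-\sigma_0 p}$ by Corollary~\ref{cor:2Dapprox2} and Theorem~\ref{thm:balanced-norm-2D}. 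The decisive cancellation is $(\lambda p\varepsilon)^{1/2}\cdot\varepsilon^{-1/2} = \lambda^{1/2} p^{1/2}$: all negative powers of $\varepsilon$ disappear, leaving $\|e_2\|_{\infty,\Omega_i^{need}} \lesssim \lambda^{1/2} p^{7/2}(\ln p+1)^2 e^{-\sigma_0 p}$. Maximizing over all elements and combining with $\|e_1\|_{\infty,\Omega}$ gives the asserted bound with a smaller $\sigma$ absorbing the algebraic powers of $p$ (and, since Corollary~\ref{cor:2Dapprox2} requires $\lambda p \ge \lambda_1$, the finitely many remaining small $p$ are absorbed into the constant).

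I expect the main obstacle to be precisely this needle-element estimate: in two dimensions there is no Sobolev embedding $H^1 \hookrightarrow L^\infty$, so one cannot simply integrate along a line as in 1D. The remedy is the hybrid device above — integrate in the short direction to cash in the homogeneous boundary condition, then use a polynomial inverse estimate in the long direction — and one must keep careful track of the element-map scalings to see that the $\varepsilon^{-1/2}$ arising from converting the balanced-norm gradient bound into a plain $L^2$-gradient bound is compensated \emph{exactly} by the $(\lambda p\varepsilon)^{1/2}$ gained from the thinness of the needle elements; everything else is a routine combination of Corollary~\ref{cor:2Dapprox2} and Theorem~\ref{thm:balanced-norm-2D}.
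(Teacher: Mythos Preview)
Your proof is correct, and the overall architecture coincides with the paper's: split $u-u_{FEM}$ into the approximation error $u-\widetilde\pi_p u$ (handled by Corollary~\ref{cor:2Dapprox2}) and the discrete part $\widetilde\pi_p u-u_{FEM}$ (handled by inverse estimates, with a separate argument on the needle elements exploiting the homogeneous boundary data).

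The genuine difference is in the needle-element step. The paper pulls back via $M_{A,i}$ to $S^{need}=[0,\lambda p\varepsilon]\times[0,1]$, applies the 2D $L^\infty$--$L^2$ inverse estimate $\|\cdot\|_{\infty}\lesssim p^2(\lambda p\varepsilon)^{-1/2}\|\cdot\|_{0,S^{need}}$, and then recovers one factor $(\lambda p\varepsilon)^{1/2}$ by a 1D Poincar\'e inequality applied to $u-u_{FEM}$; combining (\ref{maxNest1}), (\ref{maxNest2}), (\ref{eq:cor:max-norm-2D-20}) therefore yields $p^2\|\nabla(u-u_{FEM})\|_{0,\Omega_i^{need}}$, and Theorem~\ref{thm:balanced-norm-2D} only bounds this by $p^2\varepsilon^{-1/2}e^{-\beta p}$, so the $\varepsilon^{-1/2}$ is not visibly cancelled in the paper's written argument. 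Your device---pull back all the way to the unit square via $M_i^{need}$, integrate in the short direction first (costing nothing, since on $\Sref$ that direction has length~$1$), then apply a 1D inverse estimate in the long direction---produces instead $\|e_2\|_{\infty,\Omega_i^{need}}\lesssim p(\lambda p\varepsilon)^{1/2}\|\nabla e_2\|_{0,\Omega_i^{need}}$. The extra factor $(\lambda p\varepsilon)^{1/2}$ you pick up (ultimately because $\partial_\xi$ on $\Sref$ corresponds to $\lambda p\varepsilon\,\partial_x$ on the physical element while the Jacobian contributes only $(\lambda p\varepsilon)^{-1/2}$ to the $L^2$-norm) is exactly what is needed to cancel the $\varepsilon^{-1/2}$ coming from the balanced-norm estimate. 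In this sense your treatment of the needle elements is not just different but sharper; the paper's route can be salvaged by performing both the inverse estimate and the Poincar\'e step on $\Sref$ rather than on $S^{need}$, which amounts to the same scaling bookkeeping you carried out.
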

\begin{proof} First we note that Corollary~\ref{cor:2Dapprox2} provides an approximation  
$\pi_p u \in {\mathcal S}_{0}^{p}(\Delta _{BL}(\lambda,p))$ with 
\[
\left\Vert u-\pi_p u \right\Vert _{\infty, \Omega}\leq C e^{-\beta \lambda p}.
\]
In view of the triangle inequality 
$\displaystyle 
\left\Vert u-u_{FEM}\right\Vert _{\infty, \Omega}\leq 
\left\Vert u-\pi_p u \right\Vert _{\infty, \Omega} + 
\left\Vert \pi_p u - u_{FEM}\right\Vert _{\infty, \Omega},
$
we may focus on the term $\left\Vert \pi_p u-u_{FEM}\right\Vert _{\infty, \Omega}$. 
It suffices to prove the result in the layer region, i.e., for the elements
$\Omega_i^{need}$, since outside $\Omega_{\lambda p \varepsilon}$ standard inverse estimates 
(bounding the $L^\infty$-norm of polynomials by their $L^2$-norm up to powers of $p$) yield 
the desired bound in view of 
(\ref{eq:control-element-maps-1}),
(\ref{eq:control-element-maps-2}). 

For a needle element $\Omega_i^{need}$ we introduce 
$\widetilde \pi_p u:= \pi_p u|_{\Omega_i^{need}} \circ M_{A,i}$ and 
$\widetilde u_{FEM}:= u_{FEM}|_{\Omega_i^{need}} \circ M_{A,i}$. The polynomial inverse estimate 
of \cite[Thm.~{4.76}]{schwab98} and an affine scaling argument (between $\Sref$ and $S^{need}$) yield 
\begin{align*}
\left\Vert \pi_p u - u_{FEM}\right\Vert _{\infty, \Omega_i^{need}}  & = 
\left\Vert \widetilde \pi_p u - \widetilde u_{FEM}\right\Vert _{\infty, S^{need}}
\leq C \frac{p^2}{\sqrt{\lambda p \varepsilon}} 
\left\Vert \widetilde \pi_p u - \widetilde u_{FEM}\right\Vert_{0,S^{need}}\\
& \sim  \frac{p^2}{\sqrt{\lambda p \varepsilon}} 
\left\Vert \pi_p u - u_{FEM}\right\Vert_{0,\Omega_i^{need}}, 
\end{align*}
where in the last step we used the assumptions on the element maps $M_{A,i}$. 
The triangle inequality then gives
\begin{equation}
\left\Vert \pi_p u - u_{FEM}\right\Vert _{\infty, \Omega_i^{need}} \leq C \frac{p^2}{\sqrt{\lambda p \varepsilon}} \left[ 
\left\Vert \pi_p u - u\right\Vert_{0,\Omega_i^{need}} 
+ \left\Vert u - u_{FEM} \right\Vert_{0,\Omega_i^{need}}  \right]. \label{maxNest1}
\end{equation}
For the first term in (\ref{maxNest1}) we obtain from the $L^\infty$-bound of Corollary~\ref{cor:2Dapprox2}
and the fact that $|\Omega_i^{need}| \sim \lambda p \varepsilon$, 
\begin{equation}
\label{maxNest2}
\left\Vert \pi_p u - u\right\Vert_{0,\Omega_i^{need}} \lesssim 
\sqrt{\lambda p \varepsilon} e^{-\beta p}. 
\end{equation}
For the second term in (\ref{maxNest1}) we exploit the fact that $u_{FEM} = 0 = \pi_p u$ on $\partial\Omega$ 
and a 1D Poincar\'e inequality. To that end, we note that for any function $\widetilde v \in H^1(S^{need})$ 
with $v = 0$ on the edge $\{(0,y)\,|\, 0 \leq y \leq 1\}$ 
of $S^{need} = \{(x,y)\,|\, 0 \leq x \leq \lambda p \varepsilon, 0 \leq y \leq 1\}$, 
we obtain from a 1D Poincar\'e inequality 
\begin{equation}
\label{eq:cor:max-norm-2D-10}
\|\widetilde  v\|_{0,S^{need}} \leq C \sqrt{\lambda p \varepsilon} \|\partial_x \widetilde v\|_{0,S^{need} }
\leq C \sqrt{\lambda p \varepsilon} \|\nabla  \widetilde  v\|_{0,S^{need}}. 
\end{equation}
Upon setting $\widetilde v:= (u - u_{FEM})|_{\Omega_i^{need}} \circ M_{A,i}$, we may use 
(\ref{eq:cor:max-norm-2D-10}) together with the properties of $M_{A,i}$ to get 
\begin{equation}
\label{eq:cor:max-norm-2D-20}
\|u - u_{FEM}\|_{0,\Omega_i^{need}} \sim 
\|\widetilde v\|_{0,S^{need}} \leq C \sqrt{\lambda p \varepsilon} \|\nabla \widetilde v\|_{0,S^{need}} 
\sim  \sqrt{\lambda p \varepsilon} \|\nabla (u - u_{FEM})\|_{0,\Omega_i^{need}}. 
\end{equation}
Combining (\ref{maxNest1}), (\ref{maxNest2}), (\ref{eq:cor:max-norm-2D-20}) gives the desired result.
\end{proof}

\subsection{Proof of Theorem~\ref{thm:balanced-norm-2D}}
The proof of Theorem~\ref{thm:balanced-norm-2D} parallels that of the 1D case in 
Section~\ref{sec:1D}. 
We begin by defining the bilinear form for the
reduced problem, 
\begin{equation}
{\mathcal{B}}_{0}(u,v)=\left\langle bu,v\right\rangle _{\Omega }.  \label{B0_2D}
\end{equation}
We also introduce the projection operator 
$\mathcal{P}_{0}:L^{2}(\Omega )\rightarrow \mathcal{S}_{0}^{p}(\Delta_{BL}(\lambda,p) )$ by the condition 
\begin{equation*}
{\mathcal{B}}_{0}\left( u-\mathcal{P}_{0}u,v\right) =0 \quad  \forall v\in 
\mathcal{S}_{0}^{p}(\Delta_{BL}(\lambda,p) ).
\end{equation*}
Then, by reasoning as in 
(\ref{eq:galerkin-orthogonality-1D}) with Galerkin orthogonalities, we get 
\begin{eqnarray*}
\left\Vert u_{FEM}-\mathcal{P}_{0}u\right\Vert _{E,\Omega }^{2}
&=&
\varepsilon ^{2}\left\langle \nabla \left( u-\mathcal{P}_{0}u\right) ,
\nabla \left( u_{FEM}-\mathcal{P}_{0}u\right) \right\rangle_{\Omega }.
\end{eqnarray*}
Hence 
\begin{equation*}
\varepsilon \left\Vert \nabla \left( u_{FEM}-\mathcal{P}_{0}u\right)
\right\Vert _{0,\Omega }\leq \left\Vert u_{FEM}-\mathcal{P}_{0}u\right\Vert_{E,\Omega }\leq 
\varepsilon \left\Vert \nabla \left( u-\mathcal{P}_{0}u\right) \right\Vert _{0,\Omega }.
\end{equation*}
The key step towards showing robust exponential convergence in balanced norms is therefore to show 
\begin{equation*}
\left\Vert \nabla \left( u-\mathcal{P}_{0}u\right) \right\Vert _{0,\Omega}
\leq C\varepsilon ^{-1/2}e^{-\sigma p},
\end{equation*}
for some $C$ and $\sigma >0$ independent of $\varepsilon$
and $p$. Completely analogous to the one-dimensional case, we are therefore led to 
studying the $H^{1}$-stability of the projection operator $\mathcal{P}_{0}$ on the 
\emph{Spectral Boundary Layer mesh} of Definition~\ref{SBL-2D}.
%

\begin{lem}[Strengthened Cauchy-Schwarz inequality in 2D] Let ${\mathcal{B}}_{0}$ be
given by (\ref{B0_2D}). Then, 
\begin{equation*}
\left\vert {\mathcal{B}}_{0}\left( u,v\right) \right\vert \leq C \min\{1,\sqrt{\lambda
p\varepsilon }p\}\left\Vert u\right\Vert _{0,\Omega }\left\Vert
v\right\Vert _{0,\Omega_{\lambda p \varepsilon} }\quad  \forall  u\in S_{1},\quad v\in S_{\varepsilon },
\end{equation*}
with $S_{1}$, $S_{\varepsilon }$ given by (\ref{S12D}) and (\ref{Sepsilon2D}), respectively. 
The constant $C > 0$ depends solely on $\|b\|_{\infty,\Omega}$, $\inf_{x \in \Omega} b(x) > 0$, and 
the element maps of the asymptotic mesh $\Delta_A$. 
\end{lem}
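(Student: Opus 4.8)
The plan is to reduce the two-dimensional estimate to the one-dimensional strengthened Cauchy–Schwarz inequality of Lemma~\ref{SCS} by working element-by-element and pulling back to the reference square. First I would note that the ``$1$'' in the minimum follows from the ordinary Cauchy–Schwarz inequality together with $\|b\|_{\infty,\Omega}$, exactly as in 1D, so the real work is the factor $\sqrt{\lambda p \varepsilon}\, p$. Since $v \in S_\varepsilon$ is supported in $\overline{\Omega}_{\lambda p \varepsilon} = \bigcup_{i=1}^n \Omega_i^{need}$, I can write
\begin{equation*}
{\mathcal B}_0(u,v) = \sum_{i=1}^n \int_{\Omega_i^{need}} b\, u\, v\, dx,
\end{equation*}
and estimate each term separately, with the goal of summing via a discrete Cauchy–Schwarz at the end.

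On a fixed needle element $\Omega_i^{need} = M_{A,i}(S^{need})$ I would change variables to $S^{need} = [0,\lambda p\varepsilon]\times[0,1]$ using the analytic map $M_{A,i}$. By the control on the element maps (the bounds $\|M_{A,i}'\|_{\infty,\Sref} + \|(M_{A,i}')^{-1}\|_{\infty,\Sref} \le C$ from \eqref{eq:control-element-maps-1}), the Jacobian is bounded above and below uniformly, so $L^2$-norms over $\Omega_i^{need}$ and over $S^{need}$ are comparable with constants depending only on $\Delta_A$; similarly $\|b\|_{\infty}$ transfers. Thus it suffices to bound $\int_{S^{need}} \widehat u\, \widehat v$ where $\widehat u = u\circ M_{A,i}$ is a polynomial of degree $p$ on all of $\Sref$ (because $u\in S_1 = {\mathcal S}^p(\Delta_A)$, so its pull-back under $M_{A,i}$ is a genuine $Q_p$ polynomial on the whole reference square) and $\widehat v = v\circ M_{A,i}$ is a polynomial of degree $p$ on $S^{need}$. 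Writing the integral over $S^{need}$ as $\int_0^1\!\!\int_0^{\lambda p\varepsilon} \widehat u(x,y)\widehat v(x,y)\,dx\,dy$, I would for each fixed $y$ apply the 1D inequality that is the engine of Lemma~\ref{SCS}'s proof, namely $\bigl|\int_0^{\delta_1}\pi_1\pi_2\bigr| \le C\sqrt{\delta_1/\delta_2}\, p\, \|\pi_1\|_{0,(0,\delta_2)}\|\pi_2\|_{0,(0,\delta_1)}$ with $\delta_1 = \lambda p\varepsilon$, $\delta_2 = 1$, obtaining a factor $\sqrt{\lambda p\varepsilon}\, p$. Then integrating in $y$ and applying Cauchy–Schwarz in $y$ gives
\begin{equation*}
\left|\int_{S^{need}} \widehat u\, \widehat v\right| \le C\sqrt{\lambda p\varepsilon}\, p\, \|\widehat u\|_{0,\Sref}\,\|\widehat v\|_{0,S^{need}}.
\end{equation*}

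Finally I would sum over the boundary elements $i = 1,\dots,n$: transferring back to physical variables, this yields $|{\mathcal B}_0(u,v)| \le C\sqrt{\lambda p\varepsilon}\, p \sum_{i=1}^n \|u\|_{0,\Omega_i}\|v\|_{0,\Omega_i^{need}}$, and a discrete Cauchy–Schwarz over $i$ bounds this by $C\sqrt{\lambda p\varepsilon}\, p\, \|u\|_{0,\Omega}\|v\|_{0,\Omega_{\lambda p\varepsilon}}$, since the $\Omega_i$ (resp.\ $\Omega_i^{need}$) are disjoint. Combining with the trivial ``$1$'' bound gives the $\min$ and completes the proof. The main obstacle is a bookkeeping point rather than a deep one: ensuring that the pull-back $\widehat u$ of $u\in S_1$ really is a single polynomial of degree $p$ on the \emph{entire} reference square (not just piecewise), so that the 1D inequality with $\delta_2 = 1$ applies with $\|\widehat u\|_{0,\Sref}$ on the right-hand side; this is exactly where the construction of $S_1$ via the coarse element maps $M_{A,i}$ — rather than the refined maps $M_i^{need}, M_i^{reg}$ — is essential, and one must also be careful that the anisotropic geometry of $S^{need}$ (thin in $x$, full in $y$) is respected, which is why the 1D inequality is applied in the $x$-direction only, for each fixed $y$.
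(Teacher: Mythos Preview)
Your proof is correct and follows essentially the same route as the paper: pull back each needle element to the reference square via the coarse map $M_{A,i}$, exploit that $u\in S_1$ pulls back to a genuine $Q_p$ polynomial on all of $\Sref$, apply the 1D strengthened Cauchy--Schwarz inequality in the $x$-direction for each fixed $y$, then integrate in $y$ and use Cauchy--Schwarz. Your explicit final step of summing over $i=1,\dots,n$ via a discrete Cauchy--Schwarz is left implicit in the paper but is exactly what is needed.
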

\begin{proof} We restrict our attention to the case $\lambda p \varepsilon <1/2$ as the ``$1$'' 
in the minimum is a simple consequence of the Cauchy-Schwarz inequality. 
With $u \in S_1$, $v \in S_{\varepsilon}$ there holds
${\mathcal{B}}_0 (u,v) = \iint_{\Omega_{\lambda p \varepsilon}} b u v$. 
Fix $\Omega_i^{need}$ and recall that it is obtained from an element $\Omega_i$ ($i \in \{1,\ldots,n\}$) 
by a splitting, i.e., $\overline{\Omega}_i  = \overline{\Omega^{need}_i} \cup \overline{\Omega^{reg}_i}$. 
The construction of $\Delta_{BL}(\lambda,p)$ implies that the pull-back $\pi_1:= u|_{\Omega_i} \circ M_{A,i}$ 
to $\Sref$
is a polynomial of degree $p$ (in each variable) whereas the pull-back 
$\pi_2:= v|_{\Omega_i} \circ M_{A,i}$ is a piecewise polynomial
of degree $p$ (in each variable) with $\operatorname*{supp} \pi_2 \subset S^{need}$. 
Upon setting $\widehat b:= b|_{\Omega_i^{need}} \circ M_{A,i}$, which is uniformly bounded on $S^{need}$, 
we calculate 
\begin{align*}
\iint_{\Omega_i} b uv \,dx\,dy & = 
\iint_{\Omega_i^{need}} b u v\,dx\,dy  = 
\iint_{S^{need}} \pi_1(x,y) \pi_2(x,y)\, \widehat b |\operatorname*{det} M_{A,i}^\prime| \,dx\, dy . 
\end{align*}
Since $|\operatorname*{det} M_{A,i}^\prime|$ is bounded 
uniformly (in $(x,y)$),  we obtain 
\[
\left\vert \iint_{\Omega_i^{need}} b u v \right\vert  \leq C \iint_{S^{need}} |\pi_1(x,y)| |\pi_2(x,y)| dx dy = C
\int_{0}^{1} \int_{0}^{\lambda p \varepsilon} |\pi_1(x,y)| |\pi_2(x,y)| dx dy.
\]
Now, fix $y \in [0,1]$ and consider
\[
\int_{0}^{\lambda p \varepsilon} |\pi_1(x,y)| |\pi_2(x,y)| dx  \leq 
C p \sqrt{\lambda p \varepsilon} \left[ \int_{0}^{1} |\pi_1(x,y)|^2 dx \right]^{1/2} 
\left[ \int_{0}^{\lambda p \varepsilon} |\pi_2(x,y)|^2 dx \right]^{1/2}
\]
by Lemma \ref{SCS}. Integrating in $y$ from 0 to 1, gives
\[
\int_0^{1}\int_{0}^{\lambda p \varepsilon} |\pi_1(x,y)| |\pi_2(x,y)| dx dy \leq 
C p \sqrt{\lambda p \varepsilon} \int_0^1 \left[ \int_{0}^{1} |\pi_1(x,y)|^2 dx \right]^{1/2} 
\left[ \int_{0}^{\lambda p \varepsilon} |\pi_2(x,y)|^2 dx \right]^{1/2} dy.
\]
Using once more the Cauchy-Schwarz inequality, we arrive at 
\[
\iint_{S^{need}} |\pi_1(x,y)| |\pi_2(x,y)| dx dy \leq 
C p \sqrt{\lambda p \varepsilon} \Vert \pi_1 \Vert_{0,\Sref} \Vert \pi_2 \Vert_{0,S^{need}}.
\]
The assumptions on the element map $M_{A,i}$ allows us to infer 
$\Vert \pi_1 \Vert_{0,\Sref} \Vert \pi_2 \Vert_{0,S^{need}} \sim 
\Vert u \Vert_{0,\Omega_i} \Vert v \Vert_{0,\Omega_i^{need}}$, which concludes the proof.  
\end{proof}

\begin{lem} [Stability of ${\mathcal P}_0$]
\label{P02Dlemma} There exist constants $C$, $c>0$
depending solely on $\|b\|_{\infty,\Omega}$, $\inf_{x \in \Omega} b(x) > 0$, 
and the element maps of the asymptotic mesh $\Delta_A$ such that the following 
is true
under the assumption
\begin{equation}
\sqrt{\lambda p\varepsilon} p \leq c:
\label{eq:discrete-scale-resolution-2D}
\end{equation}
For each $z \in L^2(\Omega)$, 
the (unique) decomposition 
\begin{equation*}
{\cal{P}} _{0}z=z_{1}+z_{\varepsilon }
\end{equation*}
into the components $z_1 \in S_{1}$ and $z_{\varepsilon} \in S_{\varepsilon }$ satisfies 
\begin{eqnarray}
\Vert z_{1}\Vert _{0,\Omega} &\leq &C\Vert z\Vert _{0,\Omega},  \label{z12D} \\
\Vert z_{\varepsilon }\Vert _{0,\Omega} &\leq & C\{\Vert z\Vert _{0,\Omega_{\lambda p \varepsilon
}}+\sqrt{\lambda p\varepsilon }p \Vert z\Vert _{0,\Omega}\}.  \label{zepsilon2D}
\end{eqnarray}
Furthermore, 
\begin{eqnarray}
\Vert \nabla z_{1}\Vert _{0,\Omega} &\leq &C p^2 \Vert z\Vert _{0,\Omega},  \label{z12D-H1} \\
\Vert \nabla z_{\varepsilon }\Vert _{0,\Omega} &\leq & 
C\frac{p^2}{\lambda p \varepsilon} \left \{\Vert z\Vert _{0,\Omega_{\lambda p \varepsilon }}
+\sqrt{\lambda p\varepsilon }p \Vert z\Vert _{0,\Omega}\right\}.  \label{zepsilon2D-H1}
\end{eqnarray}
\end{lem}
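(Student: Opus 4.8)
The plan is to mirror the proof of Lemma~\ref{lemma:almost-orthogonal}, with the strengthened Cauchy--Schwarz inequality just established playing the role of Lemma~\ref{SCS}. Only the regime $\lambda p\varepsilon<1/2$ requires attention: in the asymptotic case $\mathcal{P}_0$ acts on the fixed mesh $\Delta_A$, and all four estimates follow at once from the $L^2$-stability of $\mathcal{P}_0$ (a consequence of coercivity of $\mathcal{B}_0$, i.e.\ $b\ge c_b>0$) together with the elementwise inverse estimates of Lemma~\ref{lemma:2D-direct-sum}. So assume $\lambda p\varepsilon<1/2$ and write $\mathcal{P}_0 z=z_1+z_\varepsilon$ with $z_1\in S_1$, $z_\varepsilon\in S_\varepsilon$ as in Lemma~\ref{lemma:2D-direct-sum}. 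The bound (\ref{z12D}) alone follows by the standard argument for strengthened Cauchy--Schwarz decompositions; the refinement (\ref{zepsilon2D}) needs the extra work described below, and for completeness both are proved together.

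The crucial new ingredient is a two-dimensional analogue of the corrector $\psi_{1,\varepsilon}z_1(0)+\psi_{2,\varepsilon}z_1(1)$ used in 1D. Since $\mathcal{P}_0 z\in H^1_0(\Omega)$ but $S_1$ carries no boundary conditions, one has $z_\varepsilon|_{\partial\Omega}=-z_1|_{\partial\Omega}$ on the boundary edges of the needle elements. I would define $\Psi\in S_\varepsilon$ elementwise: on each needle element $\Omega_i^{need}$ ($i=1,\dots,n$) let the pull-back of $\Psi$ under $M_i^{need}$ be $\widehat\Psi_i(\xi,\eta):=(1-\xi)^p\,\phi_i(\eta)$, where $\phi_i$ is the degree-$p$ trace of $z_1|_{\Omega_i}\circ M_{A,i}$ on the edge $\{\xi=0\}$. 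Then $\widehat\Psi_i\in Q_p(\Sref)$, $\Psi$ agrees with $z_1$ on $\partial\Omega$, $\Psi$ vanishes on the interfaces $\overline{\Omega_i^{need}}\cap\overline{\Omega_i^{reg}}$, and $\Psi$ is $H^1$-conforming by the same compatibility of the element maps on shared needle edges that makes the construction of $\Delta_{BL}(\lambda,p)$ in Definition~\ref{SBL-2D} well posed; hence $\widetilde z_\varepsilon:=z_\varepsilon+\Psi\in S_\varepsilon\cap H^1_0(\Omega)$. Using $\|(1-\xi)^p\|_{0,(0,1)}\sim p^{-1/2}$, the Jacobian scaling $|\det(M_i^{need})'|\sim\lambda p\varepsilon$, and the trace inverse estimate (\ref{inv2Daa}) to get $\|\phi_i\|_{0,(0,1)}\lesssim p\,\|z_1\|_{0,\Omega_i}$, one obtains $\|\Psi\|_{0,\Omega}\lesssim p^{1/2}\sqrt{\lambda p\varepsilon}\,\|z_1\|_{0,\Omega}$ and therefore $\|\widetilde z_\varepsilon\|_{0,\Omega}\le C\{\|z_\varepsilon\|_{0,\Omega_{\lambda p\varepsilon}}+p^{1/2}\sqrt{\lambda p\varepsilon}\,\|z_1\|_{0,\Omega}\}$, exactly as in 1D.

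With $\widetilde z_\varepsilon$ in hand the argument closes verbatim as in Lemma~\ref{lemma:almost-orthogonal}. Testing $\mathcal{B}_0(z_1,v_1)+\mathcal{B}_0(z_\varepsilon,v_1)=\mathcal{B}_0(\mathcal{P}_0 z,v_1)$ with $v_1=z_1$, and using coercivity of $\mathcal{B}_0$, ordinary Cauchy--Schwarz for $\mathcal{B}_0(\mathcal{P}_0 z,z_1)$ and the strengthened one for $\mathcal{B}_0(z_\varepsilon,z_1)$, gives $\|z_1\|_{0,\Omega}\le C\{\|\mathcal{P}_0 z\|_{0,\Omega}+\sqrt{\lambda p\varepsilon}\,p\,\|z_\varepsilon\|_{0,\Omega}\}$. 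Testing $\mathcal{B}_0(z_1,v_\varepsilon)+\mathcal{B}_0(z_\varepsilon,v_\varepsilon)=\mathcal{B}_0(z,v_\varepsilon)$ (valid for $v_\varepsilon\in S_\varepsilon\cap S_0(\lambda,p)$ since $\mathcal{P}_0$ is the $\mathcal{B}_0$-projection onto $S_0(\lambda,p)$) with $v_\varepsilon=\widetilde z_\varepsilon$, using coercivity, the strengthened Cauchy--Schwarz inequality for $\mathcal{B}_0(z_1,\widetilde z_\varepsilon)$ and ordinary Cauchy--Schwarz for $\mathcal{B}_0(z,\widetilde z_\varepsilon)$ and $\mathcal{B}_0(z_\varepsilon,\Psi)$, then inserting the bound for $\|\widetilde z_\varepsilon\|_{0,\Omega}$ and a Young inequality (together with the crude $p^{1/2}\le p$), yields $\|z_\varepsilon\|_{0,\Omega}\le C\{\|z\|_{0,\Omega_{\lambda p\varepsilon}}+\sqrt{\lambda p\varepsilon}\,p\,\|z_1\|_{0,\Omega}\}$. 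Substituting into the bound for $\|z_1\|_{0,\Omega}$, absorbing the resulting $\|z_1\|_{0,\Omega}$-term once $\sqrt{\lambda p\varepsilon}\,p\le c$ is small, and using $\|\mathcal{P}_0 z\|_{0,\Omega}\le C\|z\|_{0,\Omega}$ proves (\ref{z12D}); re-inserting gives (\ref{zepsilon2D}). Finally (\ref{z12D-H1}) and (\ref{zepsilon2D-H1}) follow by applying the elementwise inverse estimates (\ref{inv2Da}), (\ref{inv2Db}) of Lemma~\ref{lemma:2D-direct-sum} to $z_1$ and $z_\varepsilon$ and summing over elements. The only genuinely new work — everything else being a line-by-line transcription of the 1D proof with additional element-map bookkeeping — is the construction and norm estimate of the corrector $\Psi$, in particular verifying its $H^1$-conformity across needle edges and establishing the factor $p^{1/2}\sqrt{\lambda p\varepsilon}$.
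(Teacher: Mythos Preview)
Your proposal is correct and follows essentially the same approach as the paper: the corrector $\Psi$ you construct coincides (up to the choice of pull-back map and an immaterial scaling) with the paper's corrector $\widetilde z_\varepsilon-z_\varepsilon$, defined there via $(\widetilde z_\varepsilon|_{\Omega_i^{need}}\circ M_{A,i})(x,y)=(z_\varepsilon|_{\Omega_i^{need}}\circ M_{A,i})(x,y)+\psi_\varepsilon(x,y)(z_1|_{\Omega_i}\circ M_{A,i})(0,y)$ with a one-dimensional profile $\psi_\varepsilon$ of the same type. Your norm estimate for $\Psi$ via (\ref{inv2Daa}), the testing with $v_1=z_1$ and $v_\varepsilon=\widetilde z_\varepsilon$, and the closing via the inverse estimates (\ref{inv2Da}), (\ref{inv2Db}) all match the paper's argument.
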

\begin{proof}
The proof parallels that of Lemma~\ref{lemma:almost-orthogonal}. With 
Lemma~\ref{lemma:2D-direct-sum} we can write ${\mathcal P}_0 z = z_1 + z_\varepsilon$. 
We define the auxiliary function $\psi_\varepsilon$ on $\Sref$ by 
\begin{equation*}
\psi _{\varepsilon }(x,y ):=\begin{cases}
\left( 1-\frac{ 2x }{\lambda p\varepsilon }\right)^{p} & \text{ if } (x,y) \in S^{need} \\ 
0 & \text{  otherwise.}
\end{cases}
\end{equation*}
Then $\text{supp }\psi _{\varepsilon }\subset S^{need},\psi _{\varepsilon }(0,y)=1$ and 
$\left\Vert \psi_{\varepsilon }\right\Vert _{0,\Sref} = 
\left\Vert \psi_{\varepsilon }\right\Vert _{0,S^{need}}\sim p^{-1/2}\sqrt{\lambda p\varepsilon }$.
We define the function $\widetilde z_\varepsilon \in S_\varepsilon$ on the needle elements $\Omega_i^{need}$ 
by prescribing its pull-back to $S^{need}$:  
\begin{equation*}
(\widetilde{z}_{\varepsilon }|_{\Omega_i^{need}} \circ M_{A,i}) (x,y) :=
(z_{\varepsilon }|_{\Omega_i^{need}} \circ M_{A,i})(x,y) 
+\psi _{\varepsilon}(x, y) (z_{1}|_{\Omega_i} \circ M_{A,i})(0,y), 
\qquad (x,y) \in S^{need}; 
\end{equation*}
here, $\Omega_i$ and $\Omega_i^{need}$ are related to each other by 
$\Omega_i = \Omega_i^{need} \cup \Omega_i^{reg}$.  
It is an effect of the assumptions on the asymptotic mesh $\Delta_A$ that the elementwise defined
function $\widetilde z_\varepsilon$ is in fact in $H^1(\Omega)$ and therefore indeed 
$z_\varepsilon \in S_\varepsilon$. By construction, 
$\widetilde z_\varepsilon|_{\partial\Omega} = (z_1 + z_\varepsilon)|_{\partial\Omega} = 
({\mathcal P}_0 z)|_{\partial\Omega} = 0$ so that 
$\widetilde z_\varepsilon \in S_\varepsilon \cap S_0(\lambda,p)$. 
Noting the product structure of $(z_\varepsilon - \widetilde z_\varepsilon)|_{\Omega_i^{need}} \circ M_{A,i}$ 
on $S^{need}$ and the above estimate on $\|\psi_\varepsilon\|_{0,S^{need}}$, we get for  
$\widetilde z_\varepsilon$ with the inverse estimate (\ref{inv2Daa}),
\begin{equation*}
\left\Vert \widetilde{z}_{\varepsilon }\right\Vert _{0,\Omega }=
\left\Vert \widetilde{z}_{\varepsilon }\right\Vert _{0,\Omega _{\lambda p\varepsilon}}\leq 
C\left\{ \left\Vert z_{\varepsilon }\right\Vert _{0,\Omega _{\lambda \varepsilon }}+
p^{1/2}\sqrt{\lambda p\varepsilon }\left\Vert z_{1}\right\Vert _{0,\Omega }\right\} .
\end{equation*}
We also have in view of ${\mathcal P}_0 z = z_1 + z_\varepsilon$ 
\begin{eqnarray}
B_{0}(z_{1},v_{1})+B_{0}(z_{\varepsilon },v_{1}) &=&B_{0}(\mathcal{P}_{0}z,v_{1})
\quad \forall  v_{1}\in S_{1},  \label{1_2D} \\
B_{0}(z_{1},v_{\varepsilon })+B_{0}(z_{\varepsilon },v_{\varepsilon })
&=&B_{0}(\mathcal{P}_{0}z,v_{\varepsilon })=B_{0}(z,v_{\varepsilon })
\quad 
\forall  v_{\varepsilon }\in {S}_{\varepsilon }\cap 
\mathcal{S}_{0}^{p}\left(\Delta _{BL}(\lambda,p)\right) ,  \label{2_2D}
\end{eqnarray}
where in (\ref{2_2D}) we used the fact that $\mathcal{P}_{0}$ is the ${\mathcal B}_0$--projection 
onto $\mathcal{S}_{0}^{p}\left( \Delta _{BL}(\lambda,p)\right)$. Taking $v_{1}=z_{1}$
in (\ref{1_2D}) and 
$v_{\varepsilon }=\widetilde{z}_{\varepsilon }\in S_{\varepsilon }\cap 
\mathcal{S}_{0}^{p}\left( \Delta _{BL}(\lambda,p)\right)$ in (\ref{2_2D}) yields, together with 
the Strengthened Cauchy Schwarz inequality of Lemma~\ref{lemma:almost-orthogonal}, just like in the 1D case, 
\iftechreport 
 \begin{eqnarray*}
 \Vert z_{1}\Vert _{0,\Omega }^{2} &\leq & C \left[
 \Vert \mathcal{P}_{0}z\Vert_{0,\Omega }\Vert z_{1}\Vert _{0,\Omega }+
 \sqrt{\lambda p\varepsilon} p \Vert z_{\varepsilon }\Vert_{0,\Omega }\Vert z_{1}\Vert _{0,\Omega} \right] \\
 \Vert z_{\varepsilon }\Vert _{0,\Omega }^{2} &\leq &
 C \left[ \Vert z\Vert _{0,\Omega}\Vert \widetilde{z}_{\varepsilon }\Vert _{0,\Omega}+
 \sqrt{\lambda p\varepsilon}p \Vert \widetilde{z}_{\varepsilon}\Vert _{0,\Omega}
 \Vert z_{1}\Vert _{0,\Omega}+\Vert z_{\varepsilon}\Vert _{0,\Omega}\Vert z_{1}\Vert_{0,\Omega}
 \sqrt{\lambda p\varepsilon }p^{1/2} \right], \\
 &\leq & C \left[ \Vert z_{\varepsilon }\Vert _{0,\Omega_{\lambda p \varepsilon}}
 \{ \Vert z\Vert_{0,\Omega _{\lambda p\varepsilon }}+
 \sqrt{\lambda p\varepsilon }p \Vert z_{1}\Vert _{0,\Omega }+
 \sqrt{\lambda p\varepsilon }p  \Vert z_{1}\Vert_{0,\Omega }\} \right. \\
 &&\left.\qquad \mbox{}+\{ \Vert z\Vert _{0,\Omega _{\lambda p\varepsilon} }+
 \sqrt{\lambda p\varepsilon}p \Vert z_{1}\Vert _{0,\Omega} \} 
 \sqrt{\lambda p\varepsilon}p^{1/2}\Vert z_{1}\Vert _{0,\Omega } \right].
 \end{eqnarray*}
 Estimating $\sqrt{\lambda p\varepsilon }p^{1/2}\leq \sqrt{\lambda p\varepsilon}p$ 
 and using an appropriate Young inequality we get 
 \begin{subequations}
 \begin{eqnarray}
 \Vert z_{1}\Vert _{0,\Omega } &\leq& C \left[ \Vert \mathcal{P}_{0}z\Vert _{0,\Omega}
 +\sqrt{\lambda p\varepsilon }p \Vert z_{\varepsilon }\Vert _{0,\Omega } \right],
 \label{lemma:stable-discrete-decomposition-scalar-600_2D} \\
 \Vert z_{\varepsilon }\Vert _{0,\Omega } &\leq &C\left[ 
 \Vert z \Vert_{0,\Omega _{\lambda p\varepsilon }}+
 \sqrt{\lambda p\varepsilon }p \Vert z_{1}\Vert _{0,\Omega }\right] .
 \label{lemma:stable-discrete-decomposition-scalar-600-b_2D}
 \end{eqnarray}
 \end{subequations}
 Inserting (\ref{lemma:stable-discrete-decomposition-scalar-600-b_2D}) in 
 (\ref{lemma:stable-discrete-decomposition-scalar-600_2D}), assuming that 
 $\sqrt{\lambda p\varepsilon }p $ is sufficiently small and using the
 stability $\Vert \mathcal{P}_{0}z\Vert _{0,\Omega }\leq C \Vert z\Vert_{0,\Omega }$ 
 gives $\Vert z_{1}\Vert _{0,\Omega }\leq C\Vert z\Vert_{0,\Omega }$. 
 Inserting this bound in (\ref{lemma:stable-discrete-decomposition-scalar-600-b_2D}) 
 concludes the proof of 
 (\ref{z12D}, (\ref{zepsilon2D}). 
 The final estimates
 (\ref{z12D-H1}), (\ref{zepsilon2D-H1}) follow from 
 (\ref{z12D}, (\ref{zepsilon2D}) with the aid of the inverse estimates 
 (\ref{inv2Da}), (\ref{inv2Db}) of Lemma~\ref{lemma:2D-direct-sum}. 
\else 
the bounds 
(\ref{z12D}), (\ref{zepsilon2D}). 
 The final estimates 
 (\ref{z12D-H1}), (\ref{zepsilon2D-H1}) follow from 
 (\ref{z12D}, (\ref{zepsilon2D}) with the aid of the inverse estimates 
 (\ref{inv2Da}), (\ref{inv2Db}) of Lemma~\ref{lemma:2D-direct-sum}. 
\fi 
\end{proof}

We are now in the position to prove the following

\begin{lem}
\label{lemma:estimate-Pi0u-scalar2D}
Assume (\ref{analytic_data_2D}) and let $u$ be the solution of (\ref{BuvFv2D}). 
Let $\lambda_0 > 0$ be given by Corollary~\ref{cor:2Dapprox2}. Assume that 
$\lambda \leq \lambda_0$ and that $\lambda$, $p$, $\varepsilon$ satisfy 
(\ref{eq:discrete-scale-resolution-2D}). Then, for constants 
$C$, $\beta > 0$ independent of $\varepsilon\in(0,1]$ and $p\in\N$ (but depending on 
$\lambda$) 
\begin{equation}
\label{eq:key-estimate-2D}
\| \nabla (u - {\cal{P}}_0 u) \|_{0,\Omega} \leq C \varepsilon^{-1/2} e^{-\beta p}.
\end{equation}
\end{lem}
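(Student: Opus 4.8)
The plan is to transcribe the 1D argument of Lemma~\ref{lemma:estimate-Pi0u-scalar} to the 2D setting. As in 1D, only the needle regime $\lambda p\varepsilon<1/2$ is of interest (when hypothesis (\ref{eq:discrete-scale-resolution-2D}) holds and $\lambda p\varepsilon\ge 1/2$ the mesh is the fixed mesh $\Delta_A$ and everything is $\varepsilon$-independent). First I would invoke Corollary~\ref{cor:2Dapprox2} to obtain an approximant $\widetilde\pi_p u\in\mathcal{S}_0^p(\Delta_{BL}(\lambda,p))$ with
$$
\|u-\widetilde\pi_p u\|_{\infty,\Omega}+\sqrt{\varepsilon}\,\|\nabla(u-\widetilde\pi_p u)\|_{0,\Omega}\le C e^{-\beta p},
$$
the algebraic factor $p^2(1+\ln p)^2$ being absorbed into the exponential at the price of shrinking $\beta$; in particular $\|u-\widetilde\pi_p u\|_{0,\Omega}\le Ce^{-\beta p}$ and $\|\nabla(u-\widetilde\pi_p u)\|_{0,\Omega}\le C\varepsilon^{-1/2}e^{-\beta p}$. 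Since $\mathcal{P}_0$ is a projection onto $\mathcal{S}_0^p(\Delta_{BL}(\lambda,p))$ and $\widetilde\pi_p u$ lies in that space, we may write $u-\mathcal{P}_0 u=(u-\widetilde\pi_p u)-\mathcal{P}_0(u-\widetilde\pi_p u)$; the first term already satisfies the claimed bound, so it remains to estimate $\|\nabla\mathcal{P}_0(u-\widetilde\pi_p u)\|_{0,\Omega}$.

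Setting $z:=u-\widetilde\pi_p u$, I would decompose $\mathcal{P}_0 z=z_1+z_\varepsilon$ as in Lemma~\ref{lemma:2D-direct-sum} and apply the $H^1$-stability estimates (\ref{z12D-H1}), (\ref{zepsilon2D-H1}) of Lemma~\ref{P02Dlemma}, whose hypothesis (\ref{eq:discrete-scale-resolution-2D}) is assumed. The coarse part is harmless: $\|\nabla z_1\|_{0,\Omega}\lesssim p^2\|z\|_{0,\Omega}\lesssim e^{-\beta p}$. For the fine part we have $\|\nabla z_\varepsilon\|_{0,\Omega}\lesssim \frac{p^2}{\lambda p\varepsilon}\big(\|z\|_{0,\Omega_{\lambda p\varepsilon}}+\sqrt{\lambda p\varepsilon}\,p\,\|z\|_{0,\Omega}\big)$. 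The second summand equals $\frac{p^3}{\sqrt{\lambda p\varepsilon}}\|z\|_{0,\Omega}=\frac{p^3}{\sqrt{\lambda p}}\,\varepsilon^{-1/2}\|z\|_{0,\Omega}\lesssim \varepsilon^{-1/2}e^{-\beta' p}$, using only the $L^2$-bound on $z$. The substantive point is the first summand, $\frac{p^2}{\lambda p\varepsilon}\|z\|_{0,\Omega_{\lambda p\varepsilon}}$.

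Here the key observation — the same one used in the proof of Corollary~\ref{cor:max-norm-estimate-2D} — is that $z\in H_0^1(\Omega)$, so its pull-back $z|_{\Omega_i^{need}}\circ M_{A,i}$ to $S^{need}=[0,\lambda p\varepsilon]\times[0,1]$ vanishes on the edge $\{x=0\}$ corresponding to $\partial\Omega$. A one-dimensional Poincar\'e inequality in the $x$-variable, whose constant is the transverse extent $\lambda p\varepsilon$ of the needle, combined with the uniform control of the maps $M_{A,i}$ in (\ref{eq:control-element-maps-1}), gives $\|z\|_{0,\Omega_i^{need}}\le C\lambda p\varepsilon\,\|\nabla z\|_{0,\Omega_i^{need}}$; summing over $i=1,\dots,n$ yields $\|z\|_{0,\Omega_{\lambda p\varepsilon}}\le C\lambda p\varepsilon\,\|\nabla z\|_{0,\Omega}$. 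The factor $\lambda p\varepsilon$ exactly cancels the $(\lambda p\varepsilon)^{-1}$ in (\ref{zepsilon2D-H1}), so that $\frac{p^2}{\lambda p\varepsilon}\|z\|_{0,\Omega_{\lambda p\varepsilon}}\lesssim p^2\|\nabla z\|_{0,\Omega}\lesssim \varepsilon^{-1/2}e^{-\beta' p}$. Combining the bounds on $z_1$, $z_\varepsilon$ and $u-\widetilde\pi_p u$ gives (\ref{eq:key-estimate-2D}).

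I expect the only real obstacle to be bookkeeping: ensuring that the needle-element scalings $\|\cdot\|_{0,\Omega_i^{need}}\sim\|\cdot\circ M_{A,i}\|_{0,S^{need}}$ and the corresponding gradient equivalences hold with $\varepsilon$-independent constants — which is precisely what the analyticity/uniformity assumptions on $\Delta_A$ and the bounds (\ref{eq:control-element-maps}) provide — and that the stray powers of $p$ (and the $(1+\ln p)^2$ from Corollary~\ref{cor:2Dapprox2}) are always dominated by the exponential. The genuinely load-bearing step is the anisotropic Poincar\'e inequality on the needle elements; the rest is the 2D transcription of the 1D proof.
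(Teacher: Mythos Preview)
Your proposal is correct and follows the same overall structure as the paper's proof: invoke Corollary~\ref{cor:2Dapprox2}, write $u-\mathcal{P}_0 u=(u-\widetilde\pi_p u)-\mathcal{P}_0(u-\widetilde\pi_p u)$, decompose $\mathcal{P}_0 z=z_1+z_\varepsilon$, and apply the $H^1$-stability estimates (\ref{z12D-H1}), (\ref{zepsilon2D-H1}) of Lemma~\ref{P02Dlemma}. The one (minor) point of divergence is how the critical term $\|z\|_{0,\Omega_{\lambda p\varepsilon}}$ is controlled: you use the anisotropic Poincar\'e inequality on the needle elements (exactly as the paper does in the 1D proof of Lemma~\ref{lemma:estimate-Pi0u-scalar}), whereas the paper's 2D proof instead invokes the $L^\infty$-bound of Corollary~\ref{cor:2Dapprox2} together with $|\Omega_{\lambda p\varepsilon}|\sim\lambda p\varepsilon$ to obtain $\|z\|_{0,\Omega_{\lambda p\varepsilon}}\lesssim\sqrt{\lambda p\varepsilon}\,e^{-\beta p}$ directly (cf.\ (\ref{maxNest2})). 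Both routes lead to the same bound $\varepsilon^{-1/2}e^{-\beta p}$ after multiplication by $p^2/(\lambda p\varepsilon)$; your Poincar\'e argument is a faithful transcription of the 1D mechanism and avoids any reliance on pointwise control of the approximant, while the paper's $L^\infty$ route is slightly shorter since the pointwise bound is already stated in Corollary~\ref{cor:2Dapprox2}.
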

\begin{proof} 
By Corollary~\ref{cor:2Dapprox2} we can find an approximation 
$\pi _{p}u\in \mathcal{S}_{0}^{p}(\Delta_{BL}(\lambda ,p))$ with 
$(u-\pi_{p}u)|_{\partial \Omega }=0$ such that 
\begin{equation*}
\sqrt{\varepsilon }\left\Vert \nabla (u-\pi_{p}u)\right\Vert _{0,\Omega }\leq 
C p^{2}\left( \ln p+1\right)^{2}e^{-\beta\lambda p}.
\end{equation*}
Since $\mathcal{P}_{0}(u-\pi_{p}u)\in \mathcal{S}^p_0(\Delta_{BL}(\lambda,p))$, we decompose 
${\mathcal{P}}_{0}(u-\pi_{p}u)=z_{1}+z_{\varepsilon }$ and use 
(\ref{z12D-H1}), (\ref{zepsilon2D-H1}), 
%
\begin{eqnarray}
\vert z_{1}\vert _{1,\Omega} &\lesssim &
p^{2}\Vert u-\pi_{p}u \Vert _{0,\Omega}\lesssim Ce^{-bp}, \\
\label{eq:2D-foo-10}
\vert z_{\varepsilon }\vert _{1,\Omega} &\lesssim &
\frac{p^{2}}{\lambda p\varepsilon }\left[ 
\Vert u-\pi_{p}u \Vert_{0,\Omega_{\lambda p \varepsilon }}+
\sqrt{\lambda p\varepsilon }p\Vert u-\pi_{p}u \Vert_{0,\Omega}\right] .
\end{eqnarray}
Let us treat the term $\Vert u-\pi_{p}u\Vert _{0,\Omega_{\lambda p \varepsilon}}$ above.  
Recall that $\Omega_{\lambda p \varepsilon} = \cup_{i=1}^n \Omega_i^{need}$; from  
(\ref{maxNest2}) we therefore get 
$\displaystyle 
\Vert u-\pi_{p}u\Vert _{0,\Omega_{\lambda p \varepsilon}} \lesssim \sqrt{\lambda p \varepsilon} e^{-\beta p}. 
$
Furthermore, from Corollary~\ref{cor:2Dapprox2} we readily have 
$\Vert u-\pi_{p}u\Vert _{0,\Omega} \lesssim e^{-\beta p}$. Inserting these two estimates 
into 
(\ref{eq:2D-foo-10}) produces 
\begin{equation*}
\vert z_{\varepsilon }\vert _{1,\Omega}\lesssim \frac{p^{2}}{\lambda p \varepsilon} 
\sqrt{\lambda p \varepsilon} e^{-\beta p} + 
\sqrt{\lambda p\varepsilon  p} e^{-\beta p}  \lesssim 
\varepsilon ^{-1/2}e^{-\beta p},
\end{equation*}
where the constant $\beta > 0$ is suitably adjusted in each estimate. 
The result follows. \end{proof}

\begin{numberedproof}{Theorem~\ref{thm:balanced-norm-2D}}  Again, we focus only on the control of 
$\sqrt{\varepsilon }\Vert \nabla (u-u_{FEM})\Vert _{0,\Omega }$. We distinguish two cases:

\emph{Case 1:} Assume that (\ref{eq:discrete-scale-resolution-2D}) is satisfied. Then 
(\ref{eq:key-estimate-2D}) and Lemma~\ref{lemma:estimate-Pi0u-scalar} yield the result.

\emph{Case 2:} Assume (\ref{eq:discrete-scale-resolution-2D}) is not satisfied. 
Then 
$\varepsilon \geq c^{2}p^{-3}\lambda ^{-1}$ so that 
\begin{equation*}
\sqrt{\varepsilon }\Vert \nabla (u-u_{N})\Vert _{0,\Omega }\leq 
\varepsilon^{-1/2}\Vert u-u_{N}\Vert _{E,\Omega }\leq 
\frac{1}{c}\sqrt{\lambda} p^{3/2}\Vert u-u_{N}\Vert _{E,\Omega }\lesssim e^{-bp}.
\end{equation*}
\end{numberedproof}

\subsection{Numerical example}

We close with a numerical example in two dimensions: We consider the problem 
\begin{eqnarray*}
-\varepsilon ^{2}\Delta u+u &=&1 \quad \text{ in }\Omega:=\left\{(x,y)\,|\, 0 \leq \left(\frac{x}{2}\right)^2 + y^2 < 1
	\right\} \subset \mathbb{R}^2,
\\
u &=&0 \quad \text{ on }\partial \Omega ,
\end{eqnarray*}
We approximate the solution to 
this problem on the mesh shown in Figure~\ref{fig:2D-mesh}  below, using polynomials of degree $1, ..., 7$.
\begin{figure}[ht]
\par
\begin{center}
\includegraphics[width=0.525\textwidth]{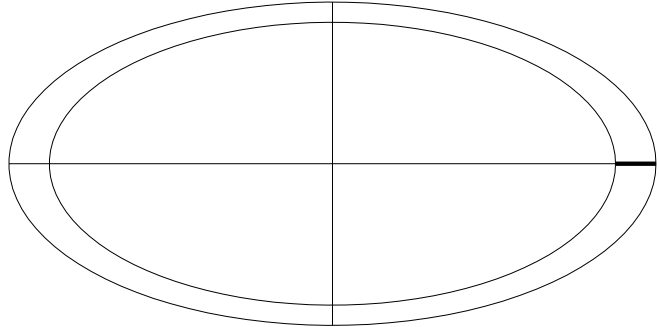}
\end{center}
\caption{\label{fig:2D-mesh} Mesh used for the two-dimensional example.}
\end{figure}
In Figure~\ref{fig:2D-results} we present the error 
\begin{equation*}
\max_{1\leq i \leq M }\left\vert u(r_i) - u_{FEM}(r_i) \right\vert, 
\qquad M:= 20, 
\end{equation*}
versus the polynomial degree $p$, in a semi-log scale. The $M$ points $r_i$
were uniformly distributed first on the mesh line connecting the points $(8 \varepsilon,0), (1,0)$,
as highlighted in Figure~\ref{fig:2D-mesh}, and second on the generic line, of width approximately $8\varepsilon$,
within the layer starting from the boundary point $(\sqrt{2},\sqrt{2}/2)$ at a $-45$ degree angle.
Figure~\ref{fig:2D-results} clearly shows the robust exponential convergence in the $L^\infty(\Omega)$-norm 
of the $hp$-FEM on the \emph{Spectral Boundary Layer mesh}.  

\begin{figure}[ht]
\par
\begin{center}
\includegraphics[width=0.45\textwidth]{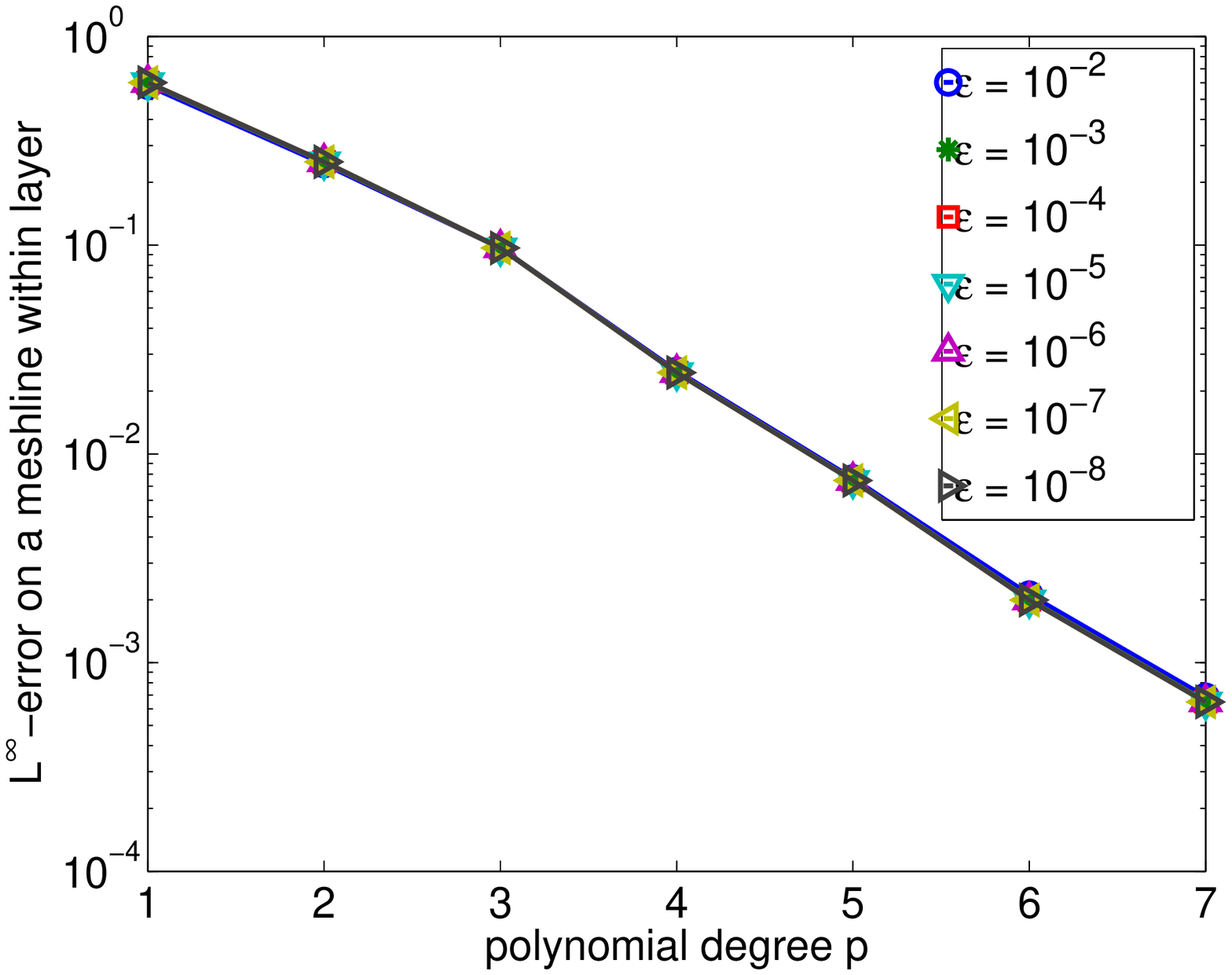}
\includegraphics[width=0.45\textwidth]{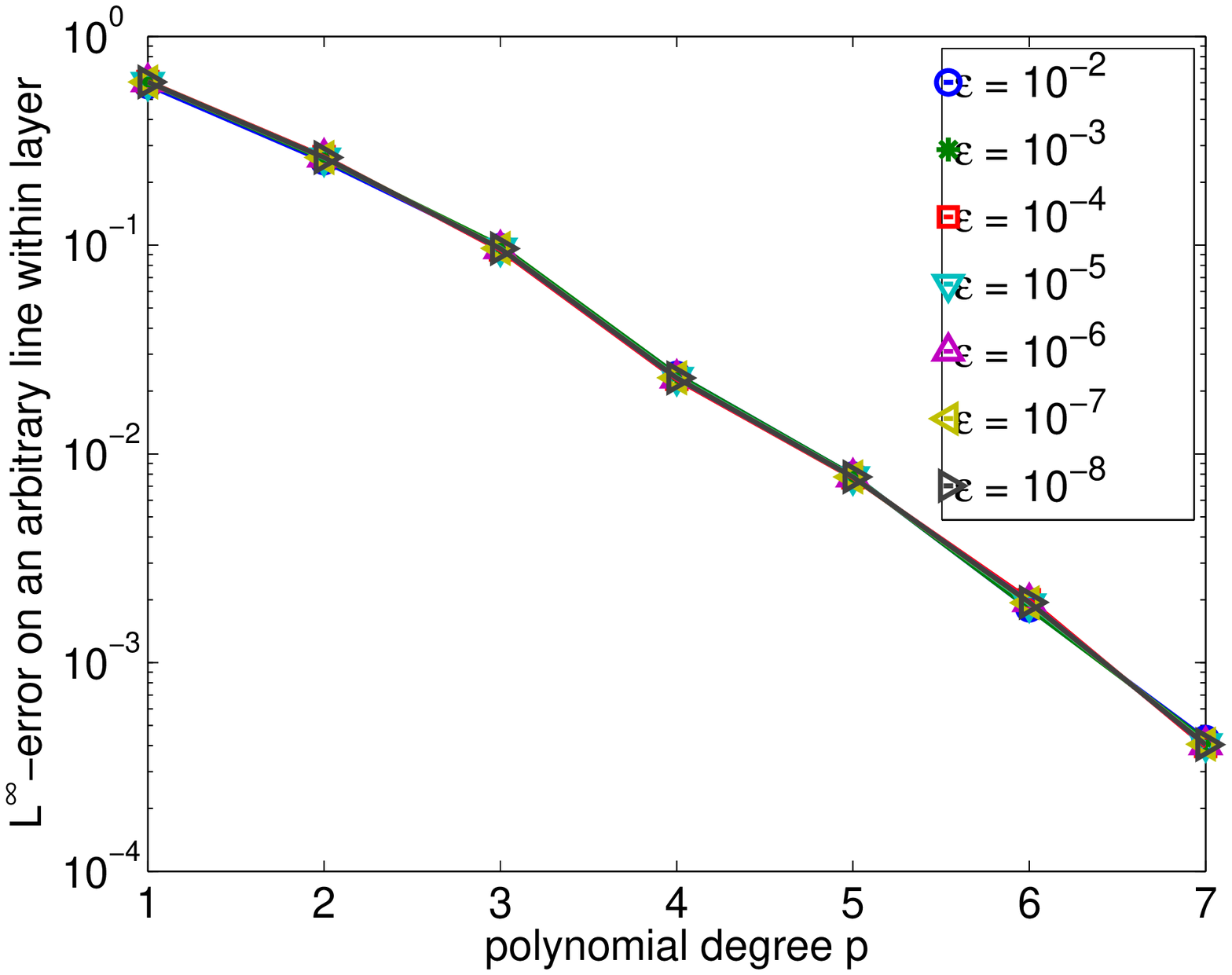}
\end{center}
\caption{\label{fig:2D-results} Maximum norm convergence of the $hp$-FEM. Left: on a meshline within the layer. Right: on a generic line within the layer.}
\end{figure}


\iftechreport
\appendix
\include{appendix}
\fi
\end{document}